\long\def\symbolfootnote[#1]#2{\begingroup%
\def\thefootnote{\fnsymbol{footnote}}\footnote[#1]{#2}\endgroup}
\newenvironment{customtheorem}[1]
  {\innercustomthm}
  {\endinnercustomthm}
\newtheorem{theorem}{Theorem}[section]
\newtheorem{corollary}[theorem]{Corollary}
\newtheorem{proposition}[theorem]{Proposition}
\newtheorem{lemma}[theorem]{Lemma}
\theoremstyle{definition}
\newtheorem{definition}[theorem]{Definition}   
\newtheorem{remark}[theorem]{Remark}      
\newtheorem{remarks}[theorem]{Remarks}
\DeclareMathSymbol{\Alpha}{\mathalpha}{operators}{"41}
\DeclareMathSymbol{\Beta}{\mathalpha}{operators}{"42}
\DeclareMathSymbol{\Epsilon}{\mathalpha}{operators}{"45}
\DeclareMathSymbol{\Zeta}{\mathalpha}{operators}{"5A}
\DeclareMathSymbol{\Eta}{\mathalpha}{operators}{"48}
\DeclareMathSymbol{\Iota}{\mathalpha}{operators}{"49}
\DeclareMathSymbol{\Kappa}{\mathalpha}{operators}{"4B}
\DeclareMathSymbol{\Mu}{\mathalpha}{operators}{"4D}
\DeclareMathSymbol{\Nu}{\mathalpha}{operators}{"4E}
\DeclareMathSymbol{\Omicron}{\mathalpha}{operators}{"4F}
\DeclareMathSymbol{\Rho}{\mathalpha}{operators}{"50}
\DeclareMathSymbol{\Tau}{\mathalpha}{operators}{"54}
\DeclareMathSymbol{\Chi}{\mathalpha}{operators}{"58}
\DeclareMathSymbol{\omicron}{\mathord}{letters}{"6F}
\newcommand{\R}{\mathbb{R}}
\newcommand{\C}{\mathbb{C}}
\newcommand{\N}{\mathbb{N}}
\newcommand{\Z}{\mathbb{Z}}
\newcommand{\ZZ}{{\widehat{\mathbb Z}}}
\newcommand{\Q}{\mathbb{Q}}
\newcommand{\cB}{{\mathcal B}}
\newcommand{\bt}{\bullet}
\newcommand\Res{\operatorname{Res}}
\def\Aut{\operatorname{Aut}}
\def\Sp{\operatorname{Sp}}
\def\SL{\operatorname{SL}}
\def\PSL{\operatorname{PSL}}
\def\Inn{\operatorname{Inn}}
\def\Out{\operatorname{Out}}
\def\Spec{\operatorname{Spec}}
\def\P{{\mathbb P}}
\def\cP{{\mathcal P}}
\def\cL{{\mathcal L}}
\def\hL{\widehat{\mathcal L}}
\def\sr{\stackrel}
\def\cT{{\mathcal T}}
\def\wT{{\widehat{\mathcal T}}}
\def\cC{{\mathscr C}}
\def\cM{{\mathcal M}}
\def\cP{{\mathcal P}}
\def\cQ{{\mathcal Q}}
\def\hP{{\widehat\Pi}}
\def\hp{{\widehat\pi}}
\def\ccM{{\overline{\mathcal M}}}
\def\wM{{\widehat{\mathcal M}}}
\def\PM{\mathrm{P}\cM}
\def\CPM{\mathrm{P}\ccM}
\def\dd{\partial}
\def\l{{\lambda}}
\def\L{{\Lambda}}
\def\u{{\upsilon}}
\def\g{{\gamma}}
\def\G{{\Gamma}}
\def\U{{\Upsilon}}
\def\PG{{\mathrm{P}\Gamma}}
\def\PU{{\mathrm{P}\Upsilon}}
\def\kG{{\check\Gamma}}
\def\kU{{\check\Upsilon}}
\def\kPG{{\mathrm{P}\check\Gamma}}
\def\hG{{\widehat\Gamma}}
\def\hU{{\widehat\Upsilon}}
\def\hPG{{\mathrm{P}\widehat\Gamma}}
\def\hPU{{\mathrm{P}\widehat\Upsilon}}
\def\kPU{{\mathrm{P}\check\Upsilon}}
\def\d{{\delta}}
\def\s{{\sigma}}
\def\bt{\bullet}
\def\dS{{\partial S}}
\def\kC{{\check  C}}
\def\hC{{\widehat{C}}}
\def\cW{{\mathscr W}}
\def\ssm{\smallsetminus}
\def\ol{\overline}
\def\td{\tilde}
\def\wh{\widehat}
\def\ovr{\overrightarrow}
\def\hookra{\hookrightarrow}
\def\co{\colon\thinspace}
\begin{document}

\title{Notes on hyperelliptic mapping class groups}

\author[M. Boggi]{Marco Boggi}

\address{Instituto de Matem\'atica e Estat\'{\i}stica, Universidade Federal Fluminense \\
Rua Prof. M. W. de Freitas, S/N, Niter\'oi - RJ, 24210-200, Brazil.}
\email{marco.boggi@gmail.com}

\begin{abstract}    
Hyperelliptic mapping class groups are defined either as the centralizers of hyperelliptic involutions inside mapping class groups of 
oriented surfaces of finite type or as the inverse images of these centralizers by the natural epimorphisms between mapping class 
groups of surfaces with marked points. We study these groups in a systematic way.  An application of this theory is a counterexample to 
the genus $2$ case of a conjecture by Putman and Wieland on virtual linear representations of mapping class groups. In the last section, 
we study profinite completions of hyperelliptic mapping class groups: we extend the congruence subgroup property to the general class 
of hyperelliptic mapping class groups introduced above and then determine the centralizers of multitwists and of open subgroups in their 
profinite completions. 

\vskip .2cm
\noindent AMS Math Classification: 57K20, 14H10, 14F45, 14G32.
\end{abstract}

\maketitle

\section{Introduction}
A hyperelliptic surface is a connected orientable differentiable surface of finite type $S$ endowed with a hyperelliptic involution, that is to say
an automorphism $\u$ of order $2$ such that the quotient of $S$ by the action of this automorphism has genus $0$. 
The \emph{hyperelliptic mapping class group $\U(S)$ of $(S,\u)$} is the centralizer of $\u$ in the mapping class group $\G(S)$ 
of the surface $S$. For a finite set of points $\cP$ on $S$, the \emph{hyperelliptic mapping class group $\U(S,\cP)$ of the marked hyperelliptic surface 
$(S,\u,\cP)$} is defined to be the inverse image of $\U(S)$ via the natural epimorphism from the mapping class group $\G(S,\cP)$ 
of $(S,\cP)$ to $\G(S)$.

Note that, in the above definition of $\U(S,\cP)$, the marked points $\cP$ have no relation with the hyperelliptic involution $\u$ while, by definition,
the sets of punctures and boundary components of $S$ are preserved by the action of $\u$. In particular, for $\cP\neq\emptyset$, the  
hyperelliptic mapping class group $\U(S,\cP)$ is \emph{not} the centralizer in $\G(S,\cP)$ of some hyperelliptic involution.
This makes these objects more intractable than the hyperelliptic mapping class groups usually considered in the literature
where Birman--Hilden theory allows to reduce most problems about them to the genus $0$ case. 

More precisely, Birman--Hilden theory relates the hyperelliptic mapping class group of a hyperelliptic surface $(S,\u)$ (without marked points) to the mapping 
class group of the quotient surface $S_{/\u}:=S/\langle\u\rangle$ through the natural exact sequence (cf.\ \cite{BH}, \cite{BH2} and \cite{MH}):
\[1\to\langle\u\rangle\to\U(S)\to\G(S_{/\u}\ssm\cB_\u),\]
where $\cB_\u$ is the branch locus of the orbit map $S\to S_{/\u}$ and the image of $\U(S)$ in $\G(S_{/\u}\ssm\cB_\u)$ is a finite index subgroup
which can be explicitly (and easily) described. This often reduces problems about the hyperelliptic mapping class group $\U(S)$ to problems
about the genus $0$ mapping class group $\G(S_{/\u}\ssm\cB_\u)$.

A primary source of interest for these groups is the fact that a closed genus $2$ surface and a $1$-punctured, genus $1$ surface support, modulo isotopy,
a unique hyperelliptic involution $\u$, so that, for $S$ of one of these types, we have $\G(S)=\U(S)$. This possibly explains why hyperelliptic mapping class 
groups are usually considered for $S$ a closed surface or a surface with a single boundary component. 
However, it turns out that, for many questions, this approach is not sufficient. 

Consider for instance the subgroup $\U(S)_\g$ of elements of $\U(S)$ which preserve the isotopy class of a symmetric (i.e.\ invariant under the action
of the hyperelliptic involution $\u$) nonseparating simple closed curve $\g$ on $S$. Similarly to what happens for the mapping class group of $S$, this 
group can be described in terms of the hyperelliptic mapping class groups of the hyperelliptic surface $(S\ssm\g,\u)$, but this is of neither of the two
types above. 

In this paper, we describe the stabilizers of symmetric multicurves for hyperelliptic mapping class groups in full generality 
(cf.\ Section~\ref{complexsym}). Contrary to what happens for mapping class groups, the description of these stabilizers in terms of 
simpler mapping class groups is not straightforward. The problem is that the standard Birman short exact sequence, which describes the mapping class 
group of a surface with punctures in terms of the mapping class group of a surface with one less puncture, is not available here. 
In fact, the description of the hyperelliptic mapping class group of a hyperelliptic surface $(S,\u)$ endowed with a pair 
of symmetric punctures is rather subtle and more so for a marked hyperelliptic surface $(S,\u,\cP)$.
In the (unmarked) closed surface case, this problem was addressed by Brendle and Margalit in Section~3 of \cite{BM} (cf.\ Theorem~3.2 ibid.). 

In Section~\ref{HyperellipticBirman}, we present two Birman short exact sequences for hyperelliptic mapping class groups of surfaces endowed 
with a pair of symmetric punctures. 

For a marked hyperelliptic surface $(S,\u,\cP)$, where $S$ is hyperbolic and without boundary, let $Q\in S\ssm\cP$ be such that $Q\neq\u(Q)$
and let $S^\circ:=S\ssm\{Q,\u(Q)\}$. Let then $\U(S^\circ,\cP)_\circ$ be the index $2$ subgroup of $\U(S^\circ,\cP)$ consisting of those elements 
which do not swap the two punctures on $S^\circ$ obtained removing $Q$ and $\u(Q)$. The main result of Section~\ref{HyperellipticBirman}, and one of 
the main results of the paper, then is (cf.\ Theorem~\ref{Q} for a more general result):

\begin{customtheorem}{A}With the above notations, there is a natural short exact sequence:
\[1\to N_\circ\to\U(S^\circ,\cP)_\circ\to\U(S,\cP,Q)\to 1,\]
where $N_\circ$ is the normal subgroup generated by the Dehn twists about symmetric separating simple closed curves on $S^\circ$ bounding 
a disc which contains no marked points and only the two punctures obtained removing $Q$ and $\u(Q)$.
\end{customtheorem}

We do not know whether a proof of this theorem can be obtained by standard mapping class groups methods. 
We were only able to find a correct statement and a proof thanks to the geometric interpretation of 
hyperelliptic mapping class groups as fundamental groups of moduli stacks of hyperelliptic curves (cf.\ Section~\ref{geoint}).

Theorem~A is the version of Birman short exact sequence which turns out to be more useful for the applications 
considered in the rest of the paper. A simpler result holds for unmarked surfaces (cf.\ Theorem~\ref{HypBirman}). 
This is a somewhat more geometric version of Theorem~3.2 in \cite{BM}, where we also deal with non-closed surfaces. 

In Section~\ref{generators}, we describe generating sets for full and \emph{pure} (see Definition~\ref{pureHyp}) hyperelliptic mapping class groups, thus
generalizing some classical results by Birman, Hilden and Humphries (cf.\ \cite{BH} and \cite{Humphries}). 

In Section~\ref{homtype}, we study in more detail the complexes of symmetric curves introduced in Section~\ref{complexsym}. 
The main result is that, for a marked hyperelliptic surface $(S,\u,\cP)$ of genus at least $2$, the hyperelliptic nonseparating 
curve complex $C_\mathrm{ns}(S,\u,\cP)$ (cf.\ Definition~\ref{symmulticurvemarked}) is homotopic to a wedge of spheres of dimension $g-1$,
as in the classical case.

In Section~\ref{representations}, we provide counterexamples to a hyperelliptic analogue (cf.\ Section~\ref{HypProblem}) of a conjecture by 
Putman and Wieland (cf.\  Conjecture~1.2 in \cite{PW} and Section~\ref{PWconjecture}). Let $(S,\u,\cP)$ be a marked hyperelliptic closed surface
and fix a point $Q\in S\ssm\cP$. The point pushing map identifies the fundamental group $\pi_1(S\ssm\cP,Q)$ with a subgroup of the hyperelliptic
mapping class group $\U(S,\cP,Q)$. For a finite index subgroup $\U^\l$ of $\U(S,\cP,Q)$, let $\Pi^\l:=\pi_1(S\ssm\cP,Q)\cap\U^\l$ and let $S^\l\to S$ be
the, possibly ramified, covering of closed surfaces which compactifies the covering $(S\ssm\cP)^\l\to S\ssm\cP$ associated to $\Pi^\l$. 
Restriction of inner automorphisms induces a linear representation $\U^\l\to\Sp(H^1(S^\l,\Q))$. We then prove (cf.\ Corollary~\ref{NoHypPutWie}):

\begin{customtheorem}{B}For $g(S)\geq 2$ and $\sharp\,\cP\geq 4$, there is a finite index subgroup $\U^\l$ of 
$\U(S,\cP,Q)$ such that the associated linear representation $\U^\l\to\Sp(H^1(S^\l,\Q))$ has a finite nontrivial orbit.
\end{customtheorem}

Since, for $g(S)=2$, the hyperelliptic mapping class group $\U(S,\cP,Q)$ identifies with the mapping class group of the marked surface
$(S,\cP,Q)$, in particular, we get a counterexample in genus $2$ to the conjecture by Putman and Wieland mentioned above. 
This fills a gap, pointed out to me by Aaron Landesman and Daniel Litt, in the proof of Theorem~3.13 in the, now retracted, paper \cite{MMJ}.
 
In Section~\ref{profinite}, we study the profinite completions of the class of hyperelliptic mapping class groups introduced above. 
For a hyperbolic marked hyperelliptic surface $(S,\u,\cP)$, there is a natural faithful outer representation 
$\rho_{(S,\cP)}\co\U(S,\cP)\hookra\Out(\pi_1(S\ssm\cP))$ which, by the universal property of profinite completions, 
induces a continuous outer representation:
\[\hat{\rho}_{(S,\cP)}\co\hU(S,\cP)\to\Out(\hp_1(S\ssm\cP)).\]

The \emph{congruence subgroup problem for hyperelliptic mapping class groups} asks whether this representation is faithful as well.
In  \cite{Hyp} and \cite{congtop}, this question was answered positively for $S$ a closed surface of genus $\geq 1$.
In this paper, we extend the latter result to arbitrary hyperbolic marked hyperelliptic surfaces (cf.\ Theorem~\ref{conghyp}):

\begin{customtheorem}{C}Let $(S,\u,\cP)$ be a hyperbolic marked hyperelliptic surface. Then, the congruence subgroup 
property holds for the associated hyperelliptic mapping class group, that is to say, the representation $\hat{\rho}_{(S,\cP)}$ is faithful.
\end{customtheorem}

This is the first of a series of foundational results for profinite hyperelliptic mapping class groups which includes a proof
that hyperelliptic mapping class groups are "good" in the sense of a definition by Serre (cf.\ Theorem~\ref{goodhyp}) and
the computation of the centralizers of multitwists and of open subgroups of profinite hyperelliptic mapping class groups
(cf.\ Corollary~\ref{stabilizerprohypdescription}, Corollary~\ref{centprosymult} and Theorem~\ref{centerhyp}).
These latter results will play an important role in a forthcoming paper on higher genus Grothendieck-Teichm\"uller theory.
\bigskip

\noindent
{\bf Acknowledgements}. I thank Aaron Landesman and Daniel Litt for their comments on  \cite{MMJ}, which eventually led me to write this paper,
and two anonymous referees for their comments on a previous version which contributed to improve it.

\section{Hyperelliptic mapping class groups}
\subsection{Surfaces}\label{surfaces}
Unless otherwise stated, in this paper, a surface $S=S_{g,n}^k$ is a closed connected oriented differentiable surface of genus $g(S)=g$ 
from which $n(S)=n$ points and $k(S)=k$ open discs have been removed. A surface $S$ is \emph{hyperbolic} if its Euler characteristic $\chi(S)$ is negative.
For $k=0$ (resp.\ $n=0$), we let $S_{g,n}:=S_{g,n}^0$ (resp.\ $S_g^k:=S_{g,0}^k$). Also, we let $S_g:=S_{g,0}$.
The boundary of $S$ is denoted by $\dS$ and let $\ring{S}:=S\ssm\dS$, so that $\ring{S}\cong S_{g,n+k}$. 
We let $\ol{S}$ be the closed surface obtained from $S$ filling in the punctures and glueing a closed disc 
to each boundary component, so that $\ol{S}\cong S_{g}$.

A \emph{marked surface $(S,\cP)$} is a surface $S$ endowed with a set $\cP$ of distinct marked points. The marked surface $(S,\cP)$ is 
\emph{hyperbolic} if $S\ssm \cP$ has negative Euler characteristic. We denote by $\vec \cP$ the set $\cP$ with an order fixed on it. 
We then denote by $\cP_\circ$ the set of punctures of $S$ and by $\cP_\dd$ the set of connected components of $\dS$.

\subsection{Mapping class groups}\label{MCGr}
For a surface $S=S_{g,n}^k$, we let $\G(S)$ and $\PG(S)$ be respectively the mapping class group and 
the pure mapping class group associated to $S$. Note that the inclusion $\ring{S}\hookra S$ induces, by restriction, 
a natural homomorphism $\G(S)\hookra\G(\ring{S})$ whose image is the stabilizer of the partition of the punctures of $\ring{S}$ 
into those which have a boundary in $S$ and those which do not.


The mapping class group $\G(S,\cP)$ of the marked surface $(S,\cP)$ is the group of relative, with respect to the subset of points $\cP$, 
isotopy classes of orientation preserving diffeomorphisms of $S$. When isotopies are further required to preserve an order on the set $\cP$, 
we denote the corresponding mapping class group by $\G(S,\vec\cP)$. The \emph{pure} mapping class group $\PG(S,\cP)$ is the kernel 
of the natural homomorphism $\G(S,\cP)\to\Sigma_{\cP}\times\Sigma_{\cP_\circ}\times\Sigma_{\cP_\dd}$, where $\Sigma_\cP$, 
$\Sigma_{\cP_\circ}$ and $\Sigma_{\cP_\dd}$ are, respectively, the permutation groups on the sets $\cP$, $\cP_\circ$ and $\cP_\dd$.
There is then a natural isomorphism $\PG(S,\cP)\cong\PG(\ring{S}\ssm\cP)$.

\subsection{Mapping class groups of disconnected surfaces}\label{MCGdisc}
For some applications, it is useful to consider the mapping class group $\G(S,\cP)$ of a marked oriented differentiable \emph{disconnected} surface
$(S,\cP)$. This group can be easily described in terms of the mapping class groups of the connected components of $(S,\cP)$. 
Let $S=\coprod_{i=1}^h S^i$ be the decomposition in connected components and let $\cP^i:=\cP\cap S^i$, for $i=1,\ldots,h$. 
Let us fix a set of representatives $(S^{i_1},\cP^{i_1}),\ldots, (S^{i_k},\cP^{i_k})$ for the orbits 
of the action of $\G(S,\cP)$ on the set of marked connected components of $S$. Of course, two marked connected components are in the same orbit, 
if and only if, they are diffeomorphic and support the same number of marked points. Let us denote by $n_i$ the cardinality of the 
$\G(S,\cP)$-orbit of the component $(S^i,\cP^i)$ of $(S,\cP)$, for $i=1,\ldots,h$. There is then an isomorphism:
\begin{equation}\label{wreathMCG}
\G(S,\cP)\cong\prod_{j=1}^k\left(\G(S^{i_j},\cP^{i_j})\wr\Sigma_{n_{i_j}}\right),
\end{equation}
where $\G(S^{i_j},\cP^{i_j})\wr\Sigma_{n_{i_j}}$ denotes the wreath product of the mapping class group with the symmetric group on $n_{i_j}$ letters.

A similar description holds for the mapping class group $\G(S,\vec\cP)$ after replacing the $\G(S,\cP)$-orbits of the marked connected components 
of $(S,\cP)$ with their $\G(S,\vec\cP)$-orbits. Note that a connected component with a nontrivial marking has a trivial $\G(S,\vec\cP)$-orbit.

By definition, the \emph{pure mapping class group of $(S,\cP)$} is instead just the direct product:
\begin{equation}\label{wreathPMCG}
\PG(S,\cP)\cong\prod_{i=1}^h\PG(S^{i},\cP^{i}).
\end{equation}

\subsection{Relative mapping class groups}\label{MCGboundary}
For a marked hyperbolic surface $(S=S_{g,n}^k,\cP)$, we let $\G(S,\cP,\dS)$ be the \emph{relative mapping class group of the marked surface 
with boundary $(S,\cP,\dS)$}, that is to say the group of relative, with respect to the subspace $\cP\cup\dS$, isotopy classes of orientation preserving 
diffeomorphisms of $S$. Let $\delta_1,\ldots,\delta_k$ be the connected components of the boundary $\dS$. Then, for a hyperbolic marked surface
$(S,\cP)$, the group $\G(S,\cP,\dS)$ is described by the natural short exact sequence:
\[1\to\prod_{i=1}^k\tau_{\d_i}^\Z\to\G(S,\cP,\dS)\to\G(S,\cP)\to 1.\]

The relative \emph{pure} mapping class group $\PG(S,\cP,\dS)$ is the subgroup of $\G(S,\cP,\dS)$ consisting of those elements which admit 
representatives fixing pointwise the sets of marked points, punctures and boundary components of $S$. For a hyperbolic marked surface
$(S,\cP)$, it is described by the short exact sequence:
\[1\to\PG(S,\cP,\dS)\to\G(S,\cP,\dS)\to\Sigma_\cP\times\Sigma_n\times\Sigma_k\to 1.\]

\subsection{Hyperelliptic surfaces}\label{hypsurfaces}
A \emph{hyperelliptic surface $(S,\u)$} is the data consisting of a surface $S$ (cf.\ Section~\ref{surfaces})
together with a hyperelliptic involution $\u$ on $S$ (i.e.\ a 
self-diffeomorphism $\u$ of $S$ such that $\u^2=\mathrm{id}_S$ and the quotient surface $S/\langle\u\rangle$ has genus $0$).
A Weierstrass point, resp.\ puncture, resp.\ boundary component of $(S,\u)$ is a point, resp.\ puncture, resp.\ boundary component, which is preserved
by the hyperelliptic involution $\u$. We then denote by $\cW_\cP(S)$, resp.\ $\cW_\circ(S)$, resp.\ $\cW_\dd(S)$, the set of Weierstrass points, resp.\ 
punctures, resp.\ boundary components on $(S,\u)$.
A \emph{marked hyperelliptic surface $(S,\u,\cP)$} is a hyperelliptic surface $(S,\u)$ endowed with a set $\cP$ 
of distinct marked points.

\subsection{Hyperelliptic mapping class groups}\label{HMCGr}
For a hyperelliptic surface $(S,\u)$, let us also denote by $\u$ the image of the involution $\u$ in the mapping class group $\G(S)$ 
(note that this image is trivial if and only if $g(S)=0$ and $S$ is not hyperbolic).
The \emph{hyperelliptic mapping class group} $\U(S)$ of $(S,\u)$ is then defined to be the centralizer of $\u$ in $\G(S)$.

Let $S_{/\u}:=S/\langle\u\rangle$ be the quotient surface, $p_\u\co S\to S_{/\u}$ be the orbit map and denote by $\cB_\u=p_\u(\cW_\cP(S))$ 
the branch locus of $p_\u$. Let then $\G(S_{/\u},\cB_\u)_{p_\u}$ be the subgroup of the mapping class group $\G(S_{/\u},\cB_\u)$ consisting of 
those elements which preserve the partitions of the sets of punctures and boundary components of $S_{/\u}$ into those which are in 
the image of $\cW_\circ(S)$ and $\cW_\dd(S)$ by $p_\u$ and those who are not. 

For a hyperbolic hyperelliptic surface $(S,\u)$, there is then a natural short exact sequence (cf.\ Corollary~12 in \cite{MH} and Proposition~3.1 in \cite{MW}):
\begin{equation}\label{HypEx}
1\to\langle\u\rangle\to\U(S)\to\G(S_{/\u},\cB_\u)_{p_\u}\to 1,
\end{equation}
and a natural surjective representation:
\begin{equation}\label{Weierstrassrepr}
\rho_{\cW}(S)\co\U(S)\to\Sigma_{\cW_\cP(S)}\times\Sigma_{\cW_\circ(S)}\times\Sigma_{\cW_\dd(S)},
\end{equation}
where $\Sigma_{\cW_\cP(S)}$, $\Sigma_{\cW_\circ(S)}$ and $\Sigma_{\cW_\dd(S)}$ are, respectively, the symmetric groups
on the sets $\cW_\cP(S)$, $\cW_\circ(S)$ and $\cW_\dd(S)$. 

\begin{definition}\label{pureHyp} The \emph{pure hyperelliptic mapping class group of a hyperbolic hyperelliptic surface $(S,\u)$} 
is defined to be the intersection inside the mapping class group $\G(S)$:
\[\PU(S):=\ker\rho_{\cW}(S)\cap\PG(S).\]
\end{definition}

\begin{remark}Note that, with this definition, $\PU(S)\subsetneq\G(S)$ even for $S=S_2$ and $S=S_{1,1}$.
\end{remark}

The short exact sequence~(\ref{HypEx}) takes a simpler form for the pure hyperelliptic mapping class group (this in part motivated the definition):
\begin{equation}\label{PHypEx}
1\to\langle\u\rangle\cap\PU(S)\to\PU(S)\to\PG(S_{/\u},\cB_\u)\to 1.
\end{equation}
Observe that $\PG(S_{/\u},\cB_\u)\cong\PG(S_{/\u}\ssm\cB_\u)$. In particular, if $S$ contains at least two symmetric punctures 
or boundary components, then $\langle\u\rangle\cap\PU(S)=\{1\}$ and there is a series of natural isomorphisms:
\[\PU(S)\cong\PG(S_{/\u},\cB_\u)\cong\PG(S_{/\u}\ssm\cB_\u).\]

\begin{remark}Given a hyperbolic hyperelliptic surface $(S,\u)$, let $(\td{S},\u)$ be the hyperelliptic surface obtained removing from $S$ 
all Weierstrass points and all Weierstrass boundary components. There is then a natural injective homomorphism $\U(S)\to\U(\td{S})$, 
induced by restriction of diffeomorphisms, whose image is the stabilizer, under the representation 
$\rho_{\cW}(\td{S})\co\U(\td{S})\to\Sigma_{\cW_\circ(\td{S})}$, of the partition of $\cW_\circ(\td{S})$ 
induced by the natural embedding $\td{S}\hookra S$. In particular, for $(S,\u)$ a hyperelliptic surface with
$\cW_\circ(S)=\cW_\dd(S)=\emptyset$, restriction of diffeomorphisms induces a natural isomorphism $\U(S)\cong\U(\td{S})$.
\end{remark}

\subsection{Hyperelliptic mapping class groups of a marked hyperelliptic surface} 
The hyperelliptic mapping class groups $\U(S,\cP)$ and $\U(S,\vec{\cP})$ of the marked hyperelliptic surface  $(S,\u,\cP)$ 
are defined to be, respectively, the inverse images of $\U(S)$ by the natural epimorphisms $\G(S,\cP)\to\G(S)$ and $\G(S,\vec{\cP})\to\G(S)$.

The \emph{pure hyperelliptic mapping class group $\PU(S,\cP)$} is the inverse image of $\PU(S)$ by 
the natural epimorphism $\PG(S,\cP)\to\PG(S)$. 

For a hyperbolic marked hyperelliptic surface $(S,\u,\cP)$, with these definitions, for a point $Q\in S\ssm\cP$, 
we get, by restriction of the usual Birman short exact sequence for the mapping class group of the marked surface $(S,\cP\cup\{Q\})$, 
the short exact sequences:
\begin{equation}\label{BirmanExact}\begin{array}{ll}
&1\to\pi_1(S\ssm\cP,Q)\to\PU(S,\cP\cup\{Q\})\to\PU(S,\cP)\to 1,\\
\\
&1\to\pi_1(S\ssm\cP,Q)\to\U(S,\ovr{\cP\cup\{Q\}})\to\U(S,\vec\cP)\to 1\\
\mbox{and}\hspace{0.5cm}&\\
&1\to\pi_1(S\ssm\cP,Q)\to\U(S,\cP,Q)\to\U(S,\cP)\to 1,
\end{array}
\end{equation}
where we let $\U(S,\cP,Q)$ be the stabilizer of the point $Q$ for the action of the hyperelliptic mapping class group $\U(S,\cP\cup\{Q\})$
on the set of marked points $\cP\cup\{Q\}$.


\begin{remark}Note that, if the set of points $\cP$ is preserved by the hyperelliptic involution $\u$, then $(S\ssm\cP,\u)$ is a 
hyperelliptic surface. However, unlike the case of mapping class groups, for $\cP\neq\emptyset$ and $(S,\cP)$ hyperbolic, we have:
\[\U(S,\cP)\supsetneq\U(S\ssm\cP)\hspace{1cm}\mbox{and}\hspace{1cm}\PU(S,\cP)\supsetneq\PU(S\ssm\cP).\]
In fact, by definition, $\U(S\ssm\cP)$ and $\PU(S\ssm\cP)$ are contained in the centralizer of the hyperelliptic involution $\u$ in $\G(S\ssm\cP)$ while 
both $\U(S,\cP)$ and $\PU(S,\cP)$ contain the kernel of the epimorphism $\PG(S,\cP)\cong\PG(S\ssm\cP)\to\PG(S)$ which does not commute with $\u$.
\end{remark}

For $m\geq 2$, there is a natural representation $\rho_{[m]}\co\U(S,\cP)\to\Sp(H_1(\ol{S},\Z/m))$ whose kernel we denote by $\U(S,\cP)^{[m]}$
and call the \emph{abelian level of order $m$ of} $\U(S,\cP)$. The pure hyperelliptic mapping class group has then the following characterization: 

\begin{proposition}\label{purehyp} For $S$ a closed surface of genus $\geq 2$ or $S=S_{1,1}$, we have:
\[\PU(S,\cP)=\U(S,\vec{\cP})^{[2]}.\]
\end{proposition}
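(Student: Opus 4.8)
The plan is to compare the two groups after passing to the Birman-type description of the hyperelliptic mapping class group with marked points in terms of the genus $0$ mapping class group, and to use the fact that for $S=S_2$ or $S=S_{1,1}$ one has $\G(S)=\U(S)$, so that the hyperelliptic involution $\u$ is central and acts trivially on $H_1(\ol S,\Z/2)$. First I would observe that $\U(S,\vec\cP)[2]\subseteq\PU(S,\cP)$: an element of $\U(S,\vec\cP)[2]$ fixes the order on $\cP$ by definition, and acting trivially on $H_1(\ol S,\Z/2)$ forces it to lie in $\PG(S)$ and to fix $\rho_\cW$ as well, since for these surfaces the Weierstrass points are exactly the $2g+2$ fixed points of $\u$, and the induced permutation action of $\G(S)$ on them is detected by the action on $H_1(\ol S,\Z/2)$ (this is the classical identification of $\Sp(H_1(\ol{S_2},\Z/2))$ with a symmetric group, and of $\SL_2(\Z/2)$ with $\Sigma_3$ in the $S_{1,1}$ case). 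Hence such an element maps into $\PU(S)$, and being order-preserving on $\cP$ it lies in $\PU(S,\cP)$.

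For the reverse inclusion $\PU(S,\cP)\subseteq\U(S,\vec\cP)[2]$, I would take $f\in\PU(S,\cP)$. By definition $f$ maps to $\PU(S)=\ker\rho_\cW(S)\cap\PG(S)$, so its image $\bar f$ in $\G(S)$ acts trivially on the Weierstrass points and lies in the pure mapping class group. The point is then to show $\bar f$ acts trivially on $H_1(\ol S,\Z/2)$. This is where I would use the Birman–Hilden sequence~(\ref{PHypEx}): $\PU(S)\cong\PG(S_{/\u}\ssm\cB_\u)$, i.e. the pure mapping class group of a genus $0$ surface with $2g+2$ punctures (six punctures for $S_2$, four for $S_{1,1}$), and the $\Sp(H_1(\ol S,\Z/2))$-representation factors through this quotient and is computed, via the Birman–Hilden correspondence, by how a diffeomorphism of the punctured sphere permutes/links the branch points — concretely through the action on $H_1$ of the double cover. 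For a \emph{pure} genus $0$ mapping class, i.e. one fixing all branch points, the induced action on the $\Z/2$-homology of the double cover is trivial: the symplectic representation mod $2$ of the hyperelliptic mapping class group factors through the symmetric group on the branch points (cf.\ the standard description, e.g.\ in \cite{BM}), so a pure element is sent to the identity. Thus $\bar f\in\U(S)[2]$ at the level of $\G(S)$, and since $f$ also preserves the order on $\cP$ and $\rho_{[2]}$ only sees the action on $H_1(\ol S,\Z/2)$ — which is insensitive to the marked points $\cP\subseteq S$ — we get $f\in\U(S,\vec\cP)[2]$.

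Finally I would need to check the compatibility of the various "pure" conditions at the level of $(S,\cP)$ rather than $S$: namely that the preimage of $\PU(S)$ under $\PG(S,\cP)\to\PG(S)$ together with the order condition on $\cP$ is exactly cut out by triviality of $\rho_{[2]}$. The marked points $\cP$ are ordinary (non-Weierstrass, non-symmetric) points of $S$, so the kernel $\pi_1(S\ssm\cP_{\le})\to$ of the Birman sequences in~(\ref{BirmanExact}) maps trivially under $\rho_{[2]}$ (point-pushing along loops in $S$ acts trivially on $H_1(\ol S)$), which is what makes the level-$2$ condition pull back cleanly and makes both inclusions above respect the marked-point data.

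The main obstacle I expect is the precise identification, for $S=S_2$ and $S=S_{1,1}$, of "$\rho_\cW(S)$ trivial and $f\in\PG(S)$" with "$\rho_{[2]}$ trivial" — i.e.\ pinning down that the mod-$2$ symplectic representation of $\U(S)=\G(S)$ has image the full symmetric group on the Weierstrass points with kernel exactly $\PU(S)$, and that no extra information beyond the Weierstrass permutation is carried by $H_1(\ol S,\Z/2)$ for a pure element. For $S_2$ this is the isomorphism $\Sp_4(\Z/2)\cong\Sigma_6$ restricted appropriately; for $S_{1,1}$ it is $\SL_2(\Z/2)\cong\Sigma_3$ acting on the three nonzero elements of $H_1(\ol{S_1},\Z/2)$, equivariantly matched with the three non-Weierstrass-point... rather, with the pairs of the four Weierstrass points — and spelling this equivariance out carefully is the technical heart of the argument.
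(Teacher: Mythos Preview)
Your approach is structurally the same as the paper's: both arguments reduce to the unmarked case by observing that $\PU(S,\cP)=\pi_{\vec\cP}^{-1}(\PU(S))$ and $\U(S,\vec\cP)[2]=\pi_{\vec\cP}^{-1}(\U(S)[2])$ under the natural epimorphism $\pi_{\vec\cP}\co\U(S,\vec\cP)\to\U(S)$ (your ``point-pushing acts trivially on $H_1(\ol S,\Z/2)$'' is exactly the second identity). For the unmarked equality $\PU(S)=\U(S)[2]$ the paper simply cites Proposition~3.3 and Remark~3.4 of \cite{Hyp}, whereas you supply that content directly via the factorization of the mod-$2$ symplectic representation through the permutation of Weierstrass points.

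One point to tighten. Your framing through ``$\G(S)=\U(S)$ for $S=S_2$ or $S_{1,1}$'' and your explicit discussion of $\Sp_4(\Z/2)\cong\Sigma_6$ suggest you may be reading the hypothesis as genus exactly~$2$. The proposition covers \emph{all} closed $S$ of genus $g\geq 2$. For $g\geq 3$ one has neither $\G(S)=\U(S)$ nor $\Sp_{2g}(\Z/2)\cong\Sigma_{2g+2}$, but your core argument still works: what you actually need is that the map $\Sigma_{2g+2}\to\Sp_{2g}(\Z/2)$ (via the action on $H_1(\ol S,\Z/2)\cong(\Z/2)^{2g+2}_{\mathrm{sum}=0}/\mathrm{diag}$) is \emph{injective}, which holds for $2g+2\geq 5$. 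State that injectivity for general $g\geq 2$ rather than the accidental isomorphism in genus~$2$, and your argument is complete. The $S_{1,1}$ case is as you say: the three Weierstrass points correspond to the three nonzero $2$-torsion classes in $H_1(\ol{S_1},\Z/2)$, and $\SL_2(\Z/2)\cong\Sigma_3$ makes the two kernels coincide.
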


\begin{proof}Proposition~3.3 and Remark~3.4 in \cite{Hyp} imply that, for an unmarked hyperelliptic surface $(S,\u)$ as in the hypothesis,
since both subgroups $\PU(S)$ and $\U(S)^{[2]}$ of $\U(S)$ contain the hyperelliptic involution $\u$, there holds $\PU(S)=\U(S)^{[2]}$.

For the marked case, let $\pi_{\vec{\cP}}\co\U(S,\vec{\cP})\to\U(S)$ be the natural epimorphism. Then, we have
that $\PU(S,\cP)=\pi_{\vec{\cP}}^{-1}(\PU(S))$ and $\U(S,\vec{\cP})^{[2]}=\pi_{\vec{\cP}}^{-1}(\U(S)^{[2]})$.
Hence, the conclusion follows also in this case.
\end{proof}

\subsection{Hyperelliptic mapping class groups of disconnected surfaces}\label{disconnected}
A \emph{disconnected} hyperelliptic surface $(S,\u)$ is an oriented differentiable surface $S$ endowed with an involution $\u$ such that 
all connected components of $S_{/\u}:=S/\langle\u\rangle$ have genus $0$.  We say that $S$ is \emph{hyperbolic} if all its connected 
components are hyperbolic. The sets $\cW_\cP(S)$, $\cW_\circ(S)$ and $\cW_\dd(S)$ of Weierstrass points, punctures 
and boundary components of $(S,\u)$ are defined as in Section~\ref{HMCGr}.

The \emph{hyperelliptic mapping class group $\U(S)$ of the disconnected surface $(S,\u)$} is then defined to be the centralizer of $\u$ 
in the mapping class group $\G(S)$ (cf.\ Section~\ref{MCGdisc}). 

Let us observe that the hyperelliptic involution $\u$ acts on the set of connected components of $S$ and that, 
if a component is not fixed by this action, it has genus $0$. As in Section~\ref{MCGdisc}, let $S=\coprod_{i=1}^h S^i$ be 
the decomposition in connected components and let $S^{i_1},\ldots, S^{i_k}$ be  a set of representatives for their $\G(S)$-orbits.
Let us further assume that $\u(S^{i_j})=S^{i_j}$, for $j=1,\ldots,t$, and 
$\u(S^{i_j})\neq S^{i_j}$, for $j=t+1,\ldots k$. 
From the isomorphism~(\ref{wreathMCG}), it immediately follows that there is an isomorphism:
\begin{equation}\label{wreathHMCG}
\U(S)\cong\prod_{j=1}^t\left(\U(S^{i_j})\wr\Sigma_{n_{i_j}}\right)\times\prod_{j=t+1}^k\left(\G(S^{i_j})\wr\Sigma_{n_{i_j}/2}\right).
\end{equation}

For a hyperbolic disconnected hyperelliptic surface $(S,\u)$, the hyperelliptic mapping class group $\U(S)$ then also fits in the natural short exact sequence:
\[1\to\langle\u\rangle\to\U(S)\to\G(S_{/\u},\cB_\u)_{p_\u}\to 1,\]
where $p_\u\co S\to S_{\u}$ is the natural map, $\cB_\u$ its branch locus and the subgroup $\G(S_{/\u},\cB_\u)_{p_\u}$ of $\G(S_{/\u},\cB_\u)$ 
is defined in the same way as in Section~\ref{HMCGr}. It follows that, there is also a natural surjective representation:
\[\rho_{\cW}(S)\co\U(S)\to\Sigma_{\cW_\cP(S)}\times\Sigma_{\cW_\circ(S)}\times\Sigma_{\cW_\dd(S)}.\]

The \emph{pure} hyperelliptic mapping class group of a disconnected hyperbolic hyperelliptic surface $(S,\u)$ 
is then defined to be the intersection in the mapping class group $\G(S)$:
\[\PU(S):=\ker\rho_{\cW}(S)\cap\PG(S).\]
There is a natural short exact sequence:
\[1\to\langle\u\rangle\cap\PU(S)\to\PU(S)\to\PG(S_{/\u},\cB_\u)\to 1.\]

In order to obtain an explicit description of $\PU(S)$, let us put the decomposition of $S$ in connected components in the form:
\begin{equation}\label{dec}
S=\left(\coprod_{i=1}^l S^i\right)\coprod\left(\coprod_{i=l+1}^{r} S^i\right)\coprod\left(\coprod_{i=l+1}^r \u(S^i)\right),
\end{equation}
where $\u(S^i)=S^i$ if and only if $1\leq i\leq l$ and $r:=\frac{h+l}{2}$. There is then an isomorphism:
\begin{equation}\label{PHMCG}
\PU(S)\cong\prod_{i=1}^l\PU(S^{i})\times\prod_{i=l+1}^r \PG(S^{i}).
\end{equation}

\subsection{Hyperelliptic mapping class groups of marked disconnected surfaces}\label{markdisconnected}
As in Section~\ref{HMCGr}, for a marked disconnected hyperelliptic surface $(S,\u,\cP)$, the hyperelliptic mapping class groups 
$\U(S,\cP)$, $\U(S,\vec\cP)$ and $\PU(S,\cP)$ are defined to be the inverse images of $\U(S)$ and  $\PU(S)$ under the natural homomorphisms 
$\G(S,\cP)\to\G(S)$, $\G(S,\vec\cP)\to\G(S)$ and $\PG(S,\cP)\to\PG(S)$, respectively. 

An explicit description of the hyperelliptic mapping class groups associated to a marked disconnected surface is, in general, complicated,
but it is simple in the following case:

\begin{proposition}\label{hypfixed}Let us assume that all connected components of $S$ are fixed by the hyperelliptic involution $\u$.
With the notations of Section~\ref{MCGdisc}, there is then an isomorphism:
\begin{equation}\label{wreathHMCG2}
\U(S,\cP)\cong\prod_{j=1}^k\left(\U(S^{i_j},\cP^{i_j})\wr\Sigma_{n_{i_j}}\right),
\end{equation}
a similar one for $\U(S,\vec\cP)$ and an isomorphism:
\begin{equation}\label{PHMCG2}
\PU(S,\cP)\cong\prod_{i=1}^h\PU(S^{i},\cP^i).
\end{equation}
\end{proposition}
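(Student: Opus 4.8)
The plan is to deduce Proposition~\ref{hypfixed} from the disconnected mapping class group isomorphisms~(\ref{wreathMCG}) and~(\ref{wreathPMCG}) by a diagram chase, using that $\U(S,\cP)$, $\U(S,\vec\cP)$ and $\PU(S,\cP)$ are defined as the inverse images of $\U(S)$, $\U(S)$ and $\PU(S)$ under the natural projections $\G(S,\cP)\to\G(S)$, $\G(S,\vec\cP)\to\G(S)$ and $\PG(S,\cP)\to\PG(S)$ that forget the marked points. The key point is that, since all connected components of $S$ are $\u$-fixed, the description~(\ref{wreathHMCG}) of $\U(S)$ reduces to $\prod_{j=1}^k\bigl(\U(S^{i_j})\wr\Sigma_{n_{i_j}}\bigr)$ (there are no swapped components, so $t=k$), and likewise~(\ref{PHMCG}) gives $\PU(S)\cong\prod_{i=1}^h\PU(S^i)$ since in the decomposition~(\ref{dec}) we have $l=h$ and $r=h$.

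\textbf{Pure case.} First I would treat~(\ref{PHMCG2}), which is the cleanest. By~(\ref{wreathPMCG}) we have $\PG(S,\cP)\cong\prod_{i=1}^h\PG(S^i,\cP^i)$ and $\PG(S)\cong\prod_{i=1}^h\PG(S^i)$, and the forgetful map $\PG(S,\cP)\to\PG(S)$ is visibly the product of the forgetful maps $\PG(S^i,\cP^i)\to\PG(S^i)$. Since $\PU(S,\cP)$ is by definition the preimage of $\PU(S)\cong\prod_{i=1}^h\PU(S^i)$ under this product map, and the preimage of a product subgroup under a product map is the product of the preimages, we get $\PU(S,\cP)\cong\prod_{i=1}^h\PU(S^i,\cP^i)$, as each factor $\PU(S^i,\cP^i)$ is by definition the preimage of $\PU(S^i)$ in $\PG(S^i,\cP^i)$.

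\textbf{Full case.} For~(\ref{wreathHMCG2}) I would run the same argument through the wreath-product isomorphism~(\ref{wreathMCG}). Fix representatives $(S^{i_j},\cP^{i_j})$ of the $\G(S,\cP)$-orbits on marked components; since forgetting $\cP$ cannot merge orbits here (two marked components are $\G(S,\cP)$-equivalent iff the underlying components are diffeomorphic and carry equal numbers of marked points, and the latter is automatic once we also know they are $\G(S)$-equivalent — one must check the orbit partition and the multiplicities $n_{i_j}$ agree, using that $\u$ fixes every component so no component of genus $0$ gets identified with its image), the forgetful map $\G(S,\cP)\to\G(S)$ is compatible with~(\ref{wreathMCG}), i.e.\ it is $\prod_j\bigl((\G(S^{i_j},\cP^{i_j})\to\G(S^{i_j}))\wr\mathrm{id}_{\Sigma_{n_{i_j}}}\bigr)$. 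Now for a surjection $\phi\co A\to B$ and a subgroup $H\le B$, one has $\phi\wr\mathrm{id}\co A\wr\Sigma_n\to B\wr\Sigma_n$ and $(\phi\wr\mathrm{id})^{-1}(H\wr\Sigma_n)=\phi^{-1}(H)\wr\Sigma_n$ (the $\Sigma_n$-coordinate is unaffected and each of the $n$ base coordinates is pulled back independently). Applying this with $\phi$ the forgetful map on the $j$-th representative and $H=\U(S^{i_j})$, and taking the product over $j$, yields~(\ref{wreathHMCG2}); the statement for $\U(S,\vec\cP)$ is identical after replacing $\G(S,\cP)$-orbits by $\G(S,\vec\cP)$-orbits and noting, as recalled in Section~\ref{MCGdisc}, that a component carrying a nontrivial marking has trivial $\G(S,\vec\cP)$-orbit.

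\textbf{Main obstacle.} The only genuinely non-formal point is the bookkeeping that the orbit data entering~(\ref{wreathMCG}) for $(S,\cP)$ and for $S$ are \emph{the same} under the hypothesis $\u(S^i)=S^i$ for all $i$ — that is, that passing to $\U$ does not further refine the orbits beyond what already happens at the level of $\G$, and that the multiplicities $n_{i_j}$ match on both sides. Once this compatibility is in place the rest is the elementary observation that taking preimages commutes with direct products and with wreath products by a fixed symmetric group, so I would state that lemma explicitly and otherwise keep the argument short.
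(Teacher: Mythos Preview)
Your pure case is clean and correct. For the full case, however, your orbit-compatibility claim is wrong: forgetting $\cP$ \emph{can} merge orbits. If $S^{a}\cong S^{b}$ but $\sharp\cP^{a}\neq\sharp\cP^{b}$, then $(S^{a},\cP^{a})$ and $(S^{b},\cP^{b})$ lie in distinct $\G(S,\cP)$-orbits while $S^{a},S^{b}$ lie in the same $\G(S)$-orbit. Consequently the forgetful map $\G(S,\cP)\to\G(S)$ is \emph{not} of the form $\prod_j\bigl(\phi_j\wr\mathrm{id}_{\Sigma_{n_{i_j}}}\bigr)$ as you assert: on the symmetric-group side it is the Young-subgroup inclusion $\prod_a\Sigma_{n_a}\hookrightarrow\Sigma_n$ within each $\G(S)$-orbit of size $n=\sum_a n_a$. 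Your parenthetical justification (``the latter is automatic once we also know they are $\G(S)$-equivalent'') is simply false.

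That said, your approach is easily repaired and then does give the result. Within a fixed $\G(S)$-orbit $O$ of components (all diffeomorphic to some $S^{i_O}$), the hypothesis $\u(S^i)=S^i$ for all $i$ gives $\U(S)|_O\cong\U(S^{i_O})\wr\Sigma_{|O|}$, i.e.\ the \emph{entire} symmetric group $\Sigma_{|O|}$ lies in $\U(S)$. Hence the preimage of $\U(S)|_O$ under the map $\prod_a\bigl(\G(S^{i_a},\cP^{i_a})\wr\Sigma_{n_a}\bigr)\to\G(S^{i_O})\wr\Sigma_{|O|}$ imposes the condition $f'\in\U(S^{i_O})$ on each base coordinate and no condition on the Young subgroup, yielding $\prod_a\bigl(\U(S^{i_a},\cP^{i_a})\wr\Sigma_{n_a}\bigr)$ as desired. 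So the key fact you actually need is not ``orbits agree'' but rather ``within each $\G(S)$-orbit, $\U(S)$ contains the full symmetric group on the components''; this is where the hypothesis that $\u$ fixes every component enters.

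The paper takes a different route: it invokes the unmarked isomorphisms~(\ref{wreathHMCG}) and~(\ref{PHMCG}) and then appeals to the Birman short exact sequences~(\ref{BirmanExact}) to add marked points inductively, rather than working with the wreath-product description of $\G(S,\cP)$. Your corrected preimage argument is arguably more direct for the pure case, while the paper's inductive use of Birman is a bit more uniform across the three statements.
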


\begin{proof}This is an easy consequence of the isomorphisms~(\ref{wreathHMCG}) and~(\ref{PHMCG}) 
and the Birman exact sequences~(\ref{BirmanExact}).
\end{proof}

\subsection{Relative hyperelliptic mapping class groups}\label{HMCGboundary}
The \emph{relative hyperelliptic mapping class group $\U(S,\dS)$ of the hyperelliptic surface with boundary $(S,\u,\dS)$}
is the centralizer of the hyperelliptic involution $\u$ in the relative mapping class group $\G(S,\dS)$.
For $(S,\u)$ hyperbolic, there is also a natural surjective representation:
\[\rho_{\cW}(S)\co\U(S)\to\Sigma_{\cW_\cP(S)}\times\Sigma_{\cW_\circ(S)}\times\Sigma_{\cW_\dd(S)}.\]
The relative \emph{pure} hyperelliptic mapping class group of the hyperbolic hyperelliptic surface $(S,\u)$ is then the intersection:
\[\PG(S,\dS):=\ker\rho_{\cW}(S)\cap\PG(S,\dS).\]

Let $\delta_1,\ldots,\delta_{h}$ be the connected components of the boundary $\dS$ which are fixed by $\u$. The remaining boundary 
components can then be divided into pairs $\delta_{h+1},\u(\delta_{h+1}),\ldots\delta_k,\u(\delta_k)$.
The groups $\U(S,\dS)$ and $\PU(S,\dS)$ are described by the natural short exact sequences:
\[\begin{array}{ll}
&1\to\prod_{i=1}^k\Z\to\U(S,\dS)\to\U(S)\to 1\\
\mbox{and}\hspace{0.5cm}&\\
&1\to\prod_{i=1}^k\Z\to\PU(S,\dS)\to\PU(S)\to 1,
\end{array}\]
where the left hand maps send the $i$-th standard basis vector of $\prod_{i=1}^k\Z$ to the Dehn twist $\tau_{\delta_i}$, for $1\leq i\leq h$, and to 
the product of Dehn twists $\tau_{\delta_i}\cdot\tau_{\u(\delta_i)}$, for $h+1\leq i\leq k$.

The relative hyperelliptic mapping class group $\U(S,\cP,\dS)$ and relative pure hyperelliptic mapping class group $\PU(S,\cP,\dS)$ 
of a marked hyperelliptic surface  $(S,\u,\cP)$ are defined to be the inverse images of $\U(S,\dS)$ and $\PU(S,\dS)$, respectively, 
via the natural epimorphisms $\G(S,\cP,\dS)\to\G(S,\dS)$ and $\PG(S,\cP,\dS)\to\PG(S,\dS)$.

\subsection{The complex of symmetric curves on $(S,\u)$}\label{complexsym}
For a hyperbolic surface $S$, the complex of curves $C(S)$ is defined to be the (abstract) simplicial complex whose $k$-simplices consist of unordered 
$(k+1)$-tuples $\{[\g_0],\ldots,[\g_k]\}$ of distinct isotopy classes of pairwise disjoint non-peripheral simple closed curves on $S$.
For a hyperelliptic surface $(S,\u)$, we then define the \emph{complex of symmetric curves} $C(S,\u)$ to be the subcomplex of $C(S)$ consisting 
of simplices fixed by the hyperelliptic involution $\u$. More intrinsically and more generally, this can be defined as follows:

\begin{definition}\label{symmulticurvemarked}\begin{itemize}\leavevmode
\item A \emph{multicurve} is a set $\s=\{\g_0,\ldots,\g_k\}$ of mutually disjoint simple closed curves on $S$.
A multicurve $\s$ on $S$ is \emph{essential} if its elements are in distinct isotopy classes and no element of $\s$ is peripheral. 
We say that a multicurve $\s$ on a hyperelliptic surface $(S,\u)$ is \emph{symmetric} if $\u(\s)$ is isotopic to $\s$.


\item The \emph{complex of symmetric curves $C(S,\u,\cP)$ of the 
marked hyperelliptic surface $(S,\u,\cP)$} is defined to be the (abstract) simplicial complex whose 
simplices are isotopy classes of essential multicurves on $S\ssm\cP$ such that their image on $S$ is symmetric. 
The \emph{complex of symmetric nonseparating curves} $C_\mathrm{ns}(S,\u,\cP)$ is then the subcomplex consisting 
of those simplices $\s\in C(S,\u,\cP)$ such that $S\ssm\s$ is connected.
\end{itemize}
\end{definition}

\begin{remark}Note that any essential multicurve on $S\ssm\cP$ which becomes null-homotopic on $S$ is, in particular, (trivially) symmetric. 
\end{remark}

There is a natural action of the mapping class group $\G(S,\cP)$ of a marked hyperbolic surface $(S,\cP)$ on the curve complex $C(S\ssm\cP)$.
The stabilizer $\G(S,\cP)_\s$ of the simplex $\s$ is 
described by the exact sequence:
\[1\to\prod_{\g\in\s}\tau_\g^{\Z}\to\G(S,\cP)_\s\to\G(S\ssm\s,\cP),\]
where the mapping class group $\G(S\ssm\s,\cP)$ of the marked (possibly) disconnected surface $(S\ssm\s,\cP)$ is described by the 
isomorphism~(\ref{wreathMCG}) and the image of $\G(S,\cP)_\s$ in this group has finite index. In fact, it consists of those elements of 
$\G(S\ssm\s,\cP)$ which preserve the gluing data on $S\ssm\s$ induced by its embedding in $S$ as encoded in the dual
graph associated to $\s$ (the finite connected graph obtained collapsing to a point each connected component of the complement of a tubular 
neighborhood of $\s$ in $S$ and then collapsing to a segment each connected component of the tubular neighborhood).

Let us  denote by $\vec\s$ the simplex $\s$ endowed with an order and an orientation on each of its elements. 
The stabilizer $\PG(S,\cP)_\s$ of a simplex $\s\in C(S\ssm\cP)$ 
for the action of the pure mapping class group $\PG(S,\cP)$ is then described by the short exact sequences:
\begin{equation}\label{stabPMCG}\begin{array}{ll}
&1\to\PG(S,\cP)_{\vec\s}\to\PG(S,\cP)_\s\to\Sigma_{\s^\pm}\\
\mbox{and}\hspace{0.5cm}&\\
&1\to\prod_{\g\in\s}\tau_\g^{\Z}\to\PG(S,\cP)_{\vec\s}\to\PG(S\ssm\s,\cP)\to 1,
\end{array}
\end{equation}
where $\Sigma_{\s^\pm}$ is the symmetric group on the set of oriented simple closed curves $\s^\pm:=\s^+\cup\s^-$ and
the group $\PG(S\ssm\s,\cP)$ is described by the isomorphism~(\ref{wreathPMCG}).
 
From these descriptions, we deduce a similar description for the stabilizers of the action of the hyperelliptic mapping class groups $\U(S,\cP)$ and
$\PU(S,\cP)$ on the complex of symmetric curves $C(S,\u,\cP)$ of a marked hyperelliptic surface $(S,\u,\cP)$. 

Let us observe that, for $\s\in C(S,\u,\cP)$, we can choose a set of disjoint representatives on $S\ssm\cP$, which we also denote by $\s$, such that  
there holds $\u(\s)=\s$, so that $(S\ssm\s,\u)$ is a, possibly disconnected, hyperelliptic surface. 
The stabilizer $\U(S,\cP)_\s$ of a simplex $\s\in C(S,\u,\cP)$ is then described by the exact sequence:
\begin{equation}\label{stabHyp}
1\to\prod_{\g\in\s}\tau_\g^{\Z}\to\U(S,\cP)_\s\to\U(S\ssm\s,\cP),
\end{equation}
where the hyperelliptic mapping class group $\U(S\ssm\s,\cP)$ of the marked, possibly disconnected, hyperelliptic 
surface $(S\ssm\s,\u,\cP)$ is described in sections~\ref{disconnected} and \ref{markdisconnected} and the image of $\U(S,\cP)_\s$ 
in $\U(S\ssm\s,\cP)$ has finite index.

If $\s=\{\g\}$ is a $0$-simplex, let us denote by $Q_+,Q_-$ the pair of punctures on $S\ssm\g$ bounded by $\g$ and by 
$\U(S\ssm\g,\cP)_{\{Q_+,Q_-\}}$ the stabilizer of this pair of punctures for the action of the hyperelliptic mapping class group $\U(S\ssm\g,\cP)$
on the set of punctures of $S\ssm\g$. Then, there is a short exact sequence:
\begin{equation}\label{stabHyp0}
1\to\tau_\g^{\Z}\to\U(S,\cP)_\s\to\U(S\ssm\g,\cP)_{\{Q_+,Q_-\}}\to 1.
\end{equation}

Let $\s_{ns}$ and $\s_{s}$ be the subsets of $\s$ consisting, respectively, of nonseparating and of separating curves.
Then, the stabilizer $\PU(S,\cP)_\s$ is described by the exact sequences:
\begin{equation}\label{stabPHyp}\begin{array}{ll}
&1\to\PU(S,\cP)_{\vec\s}\to\PU(S,\cP)_\s\to\Sigma_{\s^\pm}\\
\mbox{and}\hspace{0.5cm}&\\
&1\to\prod_{\g\in\s_{ns}}\tau_\g^{2\Z}\times\prod_{\g\in\s_{s}}\tau_\g^{\Z}\to\PU(S,\cP)_{\vec\s}\to\PU(S\ssm\s,\cP)\to 1,
\end{array}
\end{equation}
where the pure hyperelliptic mapping class group $\PU(S\ssm\s,\cP)$ of the marked, possibly disconnected, hyperelliptic 
surface $(S\ssm\s,\u,\cP)$ above is described in sections~\ref{disconnected} and \ref{markdisconnected}.

\section{Geometric interpretation of hyperelliptic mapping class groups}\label{geoint}
\subsection{Moduli stacks of complex curves of a fixed topological type}
Let $S=S_{g,n}$ be a hyperbolic surface without boundary. We can then associate to $S$ the moduli stack $\cM(S)$ of $n$-punctured, genus $g$
smooth curves. The stack $\cM(S)$ represents the functor of smooth curves $\cC\to X$ such that each geometric fiber of the family has a complex 
model homeomorphic to the surface $S$. This is a smooth, irreducible, Deligne-Mumford (briefly DM) stack over $\Spec\Z$.
Fixing an order on the set of punctures of the geometric fibers of the curves $\cC\to X$, defines an \'etale Galois covering of DM stacks 
$\mathrm{P}\cM(S)\to\cM(S)$.

The moduli stack $\cM(S)$ admits a natural compactification $\ccM(S)$, called the DM compactification. This is the stack which represents $n$-punctured,
genus $g$ stable curves, that is to say, flat nodal curves $\cC\to X$ such that each geometric fiber of the family has a complex model homeomorphic to 
the, possibly singular, topological surface obtained from $S$ collapsing to points the connected components of a, possibly empty, essential multicurve. 

The stack $\ccM(S)$ is a smooth, proper DM stack also defined over $\Spec\Z$ and, fixing an order on the set of punctures of the fibers 
of the relative curves $\cC\to X$, defines an \'etale Galois covering of DM stacks $\mathrm{P}\ccM(S)\to\ccM(S)$.

In this paper, we are essentially interested in the complex theory of these objects. Therefore, without further specification, from now on, we will 
denote by $\cM(S)$, $\mathrm{P}\cM(S)$, $\ccM(S)$ and $\mathrm{P}\ccM(S)$ the corresponding complex DM stacks.

The mapping class groups of the surface $S$ which we introduced in Section~\ref{MCGr} have a natural interpretation as the topological fundamental
groups of the above complex DM stacks. Indeed, a marking $S\to C$ of an $n$-punctured, genus $g$ complex smooth curve determines isomorphisms 
$\G(S)\cong\pi_1(\cM(S),[C])$ and $\PG(S)\cong\pi_1(\mathrm{P}\cM(S),[C])$.

\subsection{Moduli stacks of marked complex curves}
For a marked hyperbolic  surface $(S,\cP)$, let $\cM(S,\cP)$ be the moduli stack which represents the functor 
of smooth curves $\cC\to X$, endowed with sections $s_Q\co X\to\cC$, labeled by the points $Q\in\cP$, such that, for every geometric point $x\in X$, 
the fiber $(\cC_x,\{s_Q(x)\}_{Q\in\cP})$ has a complex model homeomorphic to the marked surface $(S,\cP)$. The moduli stack $\cM(S,\vec\cP)$
is defined by fixing an order on $\cP$ (and then on the set of sections).
These are smooth, irreducible, DM stacks over $\Spec\Z$. There is a natural smooth morphism $\cM(S,\cP)\to\cM(S)$ 
(resp.\ $\cM(S,\vec\cP)\to\cM(S)$) whose fiber $\cM(S,\cP)_{[C]}$ (resp.\  $\cM(S,\vec\cP)_{[C]}$), for a geometric point $[C]\in\cM(S)$, identifies
with the configuration spaces of $n$ distinct (resp.\ distinct and ordered) points on $C$. 

Let then $\mathrm{P}\cM(S,\cP)$ be the fibered product of $\mathrm{P}\cM(S)$ and $\cM(S,\vec\cP)$ over $\cM(S)$.
There is a natural isomorphism of DM stacks $\mathrm{P}\cM(S,\cP)\cong\mathrm{P}\cM(S\ssm\cP)$.

The DM compactification $\ccM(S,\cP)$ is the moduli stack of flat nodal curves $\cC\to X$, endowed with disjoint sections $s_Q\co X\to\cC$, for $Q\in\cP$, 
such that, for every geometric point $x\in X$, the fiber $\cC_x\ssm\{s_Q(x)\}_{Q\in\cP}$ has a complex model homeomorphic to
the surface obtained from $S\ssm\cP$ collapsing to points the connected components of a, possibly empty, essential multicurve. 
The DM stacks $\ccM(S,\vec\cP)$ and $\mathrm{P}\ccM(S,\cP)$ are then defined similarly.

As above, we are interested in the complex theory of these objects and we will maintain the same notation for the corresponding complex DM stacks.
A marking $\phi\co S\to C$ then determines the isomorphisms:
\[\begin{array}{ll}
&\G(S,\cP)\cong\pi_1(\cM(S,\cP),[C,\phi(\cP)]),\hspace{0.8cm}\G(S,\vec\cP)\cong\pi_1(\cM(S,\vec\cP),[C,\phi(\cP)])\\
\mbox{and}\hspace{0.5cm}&\\
&\PG(S,\cP)\cong\pi_1(\mathrm{P}\cM(S,\cP),[C,\phi(\cP)]).
\end{array}\]

\subsection{Moduli stacks of hyperelliptic curves of a fixed topological type}\label{HypTopType}
Let $(S,\u)$ be a hyperbolic hyperelliptic surface without boundary. Then, a hyperelliptic complex curve $(C,\iota)$ of topological type $(S,\u)$ is a smooth
complex curve $C$ together with a hyperelliptic involution $\iota\in\Aut(C)$ such that there exists an orientation preserving homeomorphism $f\co C\to S$ 
with the property that $f\circ\iota\circ f^{-1}=\u$.

Hyperelliptic complex curves of topological type $(S,\u)$ are parameterized by a closed smooth irreducible substack $\cM(S,\u)$ of the DM stack 
$\cM(S)$ and an orientation preserving homeomorphism $f\co C\to S$ determines an isomorphism (cf.\ Proposition~1 in \cite{GDH}): 
\[\U(S)\cong\pi_1(\cM(S,\u),[C,\iota]).\]

We let $\mathrm{P}\cM(S,\u)\to\cM(S,\u)$ be the \'etale Galois covering associated to the normal finite index subgroup $\PU(S)$ of $\U(S)$.
In particular, by definition, there is an isomorphism: 
\[\PU(S)\cong\pi_1(\mathrm{P}\cM(S,\u),[C,\iota]).\]

The assignment $(C,\iota)\mapsto(C/\iota,\cB_\iota)$, where $\cB_\iota$ is the branch locus of the natural morphism $C\to C/\iota$, 
defines natural morphisms of DM stacks:
 \begin{equation}\label{gerbes}
 \cM(S,\u)\to\cM(S_{/\u},\cB_\u)\hspace{0.7cm}\mbox{ and }\hspace{0.7cm}\mathrm{P}\cM(S,\u)\to\mathrm{P}\cM(S_{/\u},\cB_\u),
 \end{equation} 
where, as we observed above, there holds $\mathrm{P}\cM(S_{/\u},\cB_\u)\cong\mathrm{P}\cM(S_{/\u}\ssm\cB_\u)$. These morphisms
are, respectively, a $\langle\u\rangle$-gerbe and a $(\langle\u\rangle\cap\PU(S))$-gerbe over their base and induce
on fundamental groups the maps $\U(S)\to\G(S_{/\u},\cB_\u)$ and $\PU(S)\to\PG(S_{/\u},\cB_\u)$ which appear in 
the short exact sequences~(\ref{HypEx}) and~(\ref{PHypEx}), respectively. 
In particular, if $S$ has at least two symmetric punctures, there are natural isomorphisms:
\[\mathrm{P}\cM(S,\u)\cong\mathrm{P}\cM(S_{/\u},\cB_\u)\cong\mathrm{P}\cM(S_{/\u}\ssm\cB_\u).\]

The DM compactification $\ccM(S,\u)$ is the closure of $\cM(S,\u)$ in the proper DM stack $\ccM(S)$. The DM compactification 
$\mathrm{P}\ccM(S,\u)$ is then the normalization of the DM stack $\ccM(S,\u)$ in $\mathrm{P}\cM(S,\u)$.

\subsection{Moduli stacks of marked hyperelliptic curves} 
To a marked hyperelliptic surface  $(S,\u,\cP)$, we can associate the DM stacks $\cM(S,\u,\cP)$, $\cM(S,\u,\vec\cP)$ and $\mathrm{P}\cM(S,\u,\cP)$.
These are just the inverse images of $\cM(S,\u)$ and $\mathrm{P}\cM(S,\u)$ via the natural morphisms $\cM(S,\cP)\to\cM(S)$, $\cM(S,\vec\cP)\to\cM(S)$
and $\mathrm{P}\cM(S,\cP)\to\mathrm{P}\cM(S)$, respectively.

As above, a marking $\phi\co (S,\u)\to (C,\iota)$ determines isomorphisms:
\[\begin{array}{ll}
&\U(S,\cP)\cong\pi_1(\cM(S,\u,\cP),[C,\iota,\phi(\cP)]),\hspace{0.3cm}\U(S,\vec\cP)\cong\pi_1(\cM(S,\u,\vec\cP),[C,\iota,\phi(\cP)])\\
\mbox{and}\hspace{0.5cm}&\\
&\PU(S,\cP)\cong\pi_1(\mathrm{P}\cM(S,\u,\cP),[C,\iota,\phi(\cP)]).
\end{array}\]

Let $Q\in S\ssm\cP$. The Birman short exact sequences~(\ref{BirmanExact}) are then induced by the following smooth families of curves
(with fibers homeomorphic to $S\ssm\cP$):
\[\begin{array}{ll}
&\mathrm{P}\cM(S,\u,\cP\cup\{Q\})\to\mathrm{P}\cM(S,\u,\cP),\hspace{0.4cm}\cM(S,\u,\overrightarrow{\cP\cup\{Q\}})\to\cM(S,\u,\vec\cP)\\
\mbox{and}\hspace{0.5cm}&\\
&\cM(S,\u,\cP,Q)\to\cM(S,\u,\cP).
\end{array}\]
where $\cM(S,\u,\cP,Q)\to\cM(S,\u,\cP\cup\{Q\})$ is the \'etale covering associated to the subgroup $\U(S,\cP,Q)$ of $\U(S,\cP\cup\{Q\})$ 
or, equivalently, $\cM(S,\u,\cP,Q)$ parameterizes curves marked by the set $\cP$ plus a distinguished marked point $Q$.

\subsection{The DM boundary of moduli stacks of stable curves}\label{boundaryDM}
A classical result by Knudsen (cf.\ \cite{Knudsen}) describes the DM boundary of the moduli stack $\CPM(S,\cP)$ associated to a hyperbolic 
marked surface $(S,\cP)$. From our perspective and with our notation, we can formulate this result as follows. 

Let us denote by $S/\s$ the singular topological surface obtained collapsing on $S$ a multicurve, supported on $S\ssm\cP$, in the isotopy class of $\s$. 
We associate to a simplex $\s\in C(S\ssm\cP)$ the closed boundary stratum $\d_\s$ of $\CPM(S,\cP)$ whose generic point parameterizes marked stable 
complex curves of topological type $(S/\s,\cP)$. We then let $\dot{\d}_\s$ be the open substack of $\d_\s$
whose points parameterize stable complex curves of topological type $(S/\s,\cP)$.

Let $S\ssm\s=\coprod_{i=1}^h S^i$ be the decomposition in connected components and let $\cP^i:=\cP\cap S^i$, for $i=1,\ldots,h$. 
There is then a natural finite unramified morphism of DM stacks:
\[b_\s\co\prod_{i=1}^h\CPM(S^i,\cP^i)\to\CPM(S,\cP),\]
with image the closed boundary stratum $\d_\s$ and whose restriction to $\prod_{i=1}^h\PM(S^i,\cP^i)$ is \'etale onto $\dot{\d}_\s$.
Moreover, if we denote by $\td{\d}_\s$ the normalization of $\d_\s$, the morphism $b_s$ factors through a Galois finite \'etale covering:
\[\td{b}_\s\co\prod_{i=1}^h\CPM(S^i,\cP^i)\to\td{\d}_\s,\]
whose Galois group identifies with the image of the stabilizer $\PG(S,\cP)_\s$ in the symmetric group $\Sigma_{\s^\pm}$
(cf.\ the upper exact sequence~(\ref{stabPMCG})).

\subsection{The DM boundary of moduli stacks of hyperelliptic stable curves}\label{boundaryDMHyp}
A similar result holds for moduli stacks of hyperelliptic curves.
As we observed in Section~\ref{complexsym}, for $\s\in C(S,\u,\cP)$, there is a set of disjoint representatives on $S\ssm\cP$, which we also denote by 
$\s$, such that there holds $\u(\s)=\s$ and $(S\ssm\s,u)$ is a, possibly disconnected, hyperelliptic topological surface. We then associate to $\s$ the closed 
boundary stratum $\d_\s^\u$ (resp.\ the open stratum $\dot{\d}_\s^\u$) of $\CPM(S,\u,\cP)$ whose generic point (resp.\ points) parameterizes 
marked stable complex hyperelliptic curves of topological type $(S/\s,\u,\cP)$. 

For $\cP=\emptyset$, we can describe explicitly the strata $\d_\s^\u$ and $\dot{\d}_\s^\u$ in the following way.
As in Section~\ref{disconnected}, let us put the decomposition of $S$ in connected 
components in the form~(\ref{dec}) and let us denote by $\u_i$ the restriction of the hyperelliptic involution $\u$ to $S^i$ for $i=1,\ldots,l$. 
There is then a natural finite unramified morphism of DM stacks:
\begin{equation}\label{DMbHyp}
b_\s^\u\co\prod_{i=1}^l\CPM(S^i,\u_i)\times\prod_{i=l+1}^r\CPM(S^i)\to\CPM(S,\u),
\end{equation}
with image $\d_\s^\u$ and whose restriction to $\prod_{i=1}^l\PM(S^i,\u_i)\times\prod_{i=l+1}^r\PM(S^i)$ is \'etale onto 
$\dot{\d}_\s^\u$. Moreover, if we let $\td{\d}_\s^\u$ be the normalization of $\d_\s^\u$, the morphism $b_s^\u$ factors 
through a finite \'etale Galois covering:
\[\td{b}_\s^\u\co\prod_{i=1}^l\CPM(S^i,\u_i)\times\prod_{i=l+1}^r\CPM(S^i)\to\td{\d}_\s^\u,\]
whose Galois group identifies with the image of the stabilizer $\PU(S)_\s$ in the symmetric group $\Sigma_{\s^\pm}$
(cf.\ the upper exact sequence~(\ref{stabPHyp})).

For $\cP\neq\emptyset$, we can describe explicitly these strata only under the further assumption that $\u$ preserves all connected components
of $S\ssm\s$. With the notations of Section~\ref{boundaryDM}, let us also denote by $\u_i$ the restriction of the hyperelliptic involution $\u$ to $S^i$ 
for $i=1,\ldots,h$. There is then a natural finite unramified morphism of DM stacks:
\[b_\s^\u\co\prod_{i=1}^h\CPM(S^i,\u_i,\cP^i)\to\CPM(S,\u,\cP),\]
with image the closed boundary stratum $\d_\s^\u$ and whose restriction to $\prod_{i=1}^h\PM(S^i,\u_i,\cP^i)$ is \'etale onto $\dot{\d}_\s^\u$.
As above, if we denote by $\td{\d}_\s^\u$ the normalization of $\d_\s^\u$, the morphism $b_s^\u$ factors through a Galois finite \'etale covering:
\[\td{b}_\s^\u\co\prod_{i=1}^h\CPM(S^i,\u_i,\cP^i)\to\td{\d}_\s,\]
whose Galois group identifies with the image of the stabilizer $\PU(S,\cP)_\s$ in the symmetric group $\Sigma_{\s^\pm}$
(cf.\ the exact sequence~(\ref{stabPHyp})).

\subsection{Levels and level structures}
A \emph{level} $\G^\l$ is a normal finite index subgroup of the mapping class group $\G(S,\cP)$. The \emph{level structure} $\cM(S,\cP)^\l$ is the
finite \'etale Galois covering of the moduli stack $\cM(S,\cP)$ associated to this subgroup. 

The \emph{abelian level $\G(S,\cP)^{[m]}$} (or simply $\G^{[m]}$) of order $m$, for an integer 
$m\geq 2$, is the kernel of the natural representation $\rho^{[m]}\co\G(S,\cP)\to\Sp(H_1(\ol{S},\Z/m)$. 
We denote by $\cM(S,\cP)^{[m]}$ the associated \emph{abelian} level structure. 

For $\G^\l$ a level of  $\G(S,\cP)$, the DM compactification $\ccM(S,\cP)^\l$ of the level structure $\cM(S,\cP)^\l$ is defined to be the
normalization of the moduli stack $\ccM(S,\cP)$ in $\cM(S,\cP)^\l$.

Similarly, a \emph{level} $\U^\l$ of the hyperelliptic mapping class group is a normal finite index subgroup of  $\U(S,\cP)$ and the associated finite \'etale 
Galois covering of the moduli stack $\cM(S,\u,\cP)$ is denoted by $\cM(S,\u,\cP)^\l$ and also called a \emph{level structure}.

As above, to an integer $m\geq 2$, we associate the \emph{abelian level} of order $m$ defined 
by $\U(S,\cP)^{[m]}:=\U(S,\cP)\cap\G(S,\cP)^{[m]}$ and the  \emph{abelian} level structure $\cM(S,\u,\cP)^{[m]}$. 
We also let $\PU(S,\cP)^{[m]}:=\PU(S,\cP)\cap\G(S,\cP)^{[m]}$ and $\PU(S,\vec{\cP})^{[m]}:=\PU(S,\vec{\cP})\cap\G(S,\cP)^{[m]}$.

For $\U^\l$ a level of  $\U(S,\cP)$, the DM compactification $\ccM(S,\u,\cP)^\l$ of the level structure $\cM(S,\u,\cP)^\l$ is defined to be the
normalization of the moduli stack $\ccM(S,\u,\cP)$ in $\cM(S,\u,\cP)^\l$.

\subsection{The hyperelliptic Birman exact sequence}\label{HyperellipticBirman} 
Besides the Birman short exact sequences~(\ref{BirmanExact}), which are a consequence of the standard one for the mapping class group,
there are short exact sequences which are peculiar to the hyperelliptic mapping class group. 

For a hyperbolic hyperelliptic surface without boundary $(S,\u)$, let $Q\in S\ssm\cW_\cP(S)$ and $R:=\u(Q)$. Let then $S^\circ:=S\ssm\{Q,R\}$
and label by $Q$ (resp.\ $R$) the puncture obtained by removing $Q$ (resp.\ $R$) from $S$. 
For a marking $(S^\circ,\cP)$, let $\U(S^\circ,\cP)_\circ$ and $\U(S^\circ,\vec\cP)_\circ$ 
be the index $2$ subgroups of $\U(S^\circ,\cP)$ and $\U(S^\circ,\vec\cP)$, respectively,
consisting of those elements which do not swap the two punctures labeled by $Q$ and $R$. We then have:

\begin{theorem}\label{Q}With the above notations, there are natural short exact sequences:
\begin{equation}\label{forgetsympunctures}\begin{array}{ll}
&1\to N_\circ\to\U(S^\circ,\cP)_\circ\to\U(S,\cP,Q)\to 1,\\
\\
&1\to N_\circ\to\U(S^\circ,\vec\cP)_\circ\to\U(S,\ovr{\cP\cup\{Q\}})\to 1\\
\mbox{and}\hspace{0.5cm}&\\
&1\to N_\circ\to\PU(S^\circ,\cP)\to\PU(S,\cP\cup\{Q\})\to 1,
\end{array}
\end{equation}
where $N_\circ$ is the normal subgroup generated by the Dehn twists about symmetric separating simple closed curves on $S^\circ$ bounding 
a disc which contains no marked points and only the two punctures labeled by $Q$ and $R$.
\end{theorem}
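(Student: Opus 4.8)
The plan is to realize these three exact sequences as consequences of a single geometric fact about the moduli stacks introduced in Section~\ref{HypTopType}: the morphism which forgets a symmetric pair of punctures $\{Q,R\}$ from a hyperelliptic curve is, up to the subtlety that the hyperelliptic involution is prescribed, a fibration whose fiber is (a cover of) the quotient curve $C/\iota$ minus the branch locus and the images of $Q$ and $R$. Concretely, I would start from the universal hyperelliptic curve over $\cM(S,\u)$ and form the stack parameterizing a hyperelliptic curve $(C,\iota)$ together with a point $Q\in C\ssm\cW$; this is exactly $\cM(S,\u,\cP,Q)$ (or, after also remembering $\cP$ with or without an order, the relevant variant). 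The key observation is that giving such a $Q$ is equivalent to giving the pair $\{Q,R=\iota(Q)\}$ of symmetric punctures together with a choice of which element of the pair is "$Q$"; and deleting both from $C$ produces a hyperelliptic curve of topological type $S^\circ$. This identification is precisely what makes $\cM(S,\u,\cP,Q)$ the target of an étale double cover by (an open part of) $\cM(S^\circ,\u,\cP)$. Passing to fundamental groups via the isomorphisms $\U(S,\cP,Q)\cong\pi_1(\cM(S,\u,\cP,Q))$ etc., the map $\U(S^\circ,\cP)_\circ\to\U(S,\cP,Q)$ appears as the one induced by this forgetful morphism of DM stacks, once we restrict to the index-$2$ subgroup that does not interchange $Q$ and $R$ (the "$\circ$" condition corresponds to the étale double cover being connected on the nose only after this restriction).

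The next step is to identify the kernel $N_\circ$. The fiber of the forgetful family $\cM(S^\circ,\u,\cP)_\circ\to\cM(S,\u,\cP,Q)$ over a point $[C,\iota,\cP,Q]$ is the complement in $C$ of the marked points, of $Q$ and $R$, and — crucially — it is the space of configurations of the symmetric pair, which retracts onto $C/\iota$ minus the branch locus and the image of $Q$; but on $S^\circ$ itself the difference between $\pi_1$ of this fiber and a free group is accounted for exactly by the symmetric separating curves enclosing $Q$ and $R$ in a twice-punctured disc. So $N_\circ$ is the normal closure of the Dehn twists about these curves. I would make this precise by comparing with the ordinary Birman sequences~(\ref{BirmanExact}): adding a single point to $(S^\circ,\cP)$ and then quotienting appropriately, or by directly analyzing the long exact homotopy sequence of the fibration of stacks, whose fiber $\pi_1$ surjects onto $N_\circ$. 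The cleanest route is probably to first prove the third (pure) sequence, where all the symmetric-group decorations vanish and the statement is an honest fibration sequence of $K(\pi,1)$'s, and then deduce the first two by taking the appropriate extensions by the symmetric groups on the punctures and on the pair $\{Q,R\}$, exactly as in the passage from $\PG$ to $\G$ and $\G(S,\vec\cP)$ throughout Section~\ref{HMCGr}.

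Finally I would handle the bookkeeping linking the three sequences. Each of $\PU(S^\circ,\cP)$, $\U(S^\circ,\vec\cP)_\circ$, $\U(S^\circ,\cP)_\circ$ surjects onto the corresponding $\PU(S,\cP\cup\{Q\})$, $\U(S,\ovr{\cP\cup\{Q\}})$, $\U(S,\cP,Q)$, and the three surjections are compatible with the natural maps relating the decorated versions; since the kernel $N_\circ$ is the same subgroup in all three (it lives in the pure part, generated by twists that commute with everything relevant), it suffices to establish the pure sequence and then pull back along the finite quotients $\U(S^\circ,\cP)_\circ\to\Sigma$ and $\U(S,\cP,Q)\to\Sigma$. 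The main obstacle, as the text itself flags, is the geometric input: proving that the forgetful morphism of hyperelliptic moduli stacks is a fibration with the asserted fiber, i.e.\ that remembering the hyperelliptic involution does not obstruct the configuration-space structure. This is where Section~\ref{geoint}'s interpretation of $\U$ as $\pi_1$ of $\cM(S,\u)$ does the real work: on the stack level the involution is part of the data, so "forgetting $Q$ and $R$ symmetrically" is a genuine morphism, and its fibers are configuration spaces of points on the fixed quotient $\mathbb{P}^1$ with the branch locus removed, whose fundamental groups are exactly free groups twisted by the symmetric-separating-curve twists. Once that fibration is in hand, the three exact sequences drop out of the long exact homotopy sequence together with the standard identifications of fiber $\pi_1$'s.
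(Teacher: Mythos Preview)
Your overall geometric instinct---work with the moduli stacks of Section~\ref{geoint} and read the sequence off from a map of $K(\pi,1)$'s---is right, but the central step is misidentified. The map
\[
\cM(S^\circ,\u,\cP)_\circ\;\longrightarrow\;\cM(S,\u,\cP,Q)
\]
is \emph{not} a fibration with positive-dimensional fiber. Once $[C,\iota,\cP,Q]$ is fixed with $Q$ not a Weierstrass point, the point $R=\iota(Q)$ is determined, and hence so is $[C\ssm\{Q,R\},\iota,\cP]$: the generic fiber is a single point. What actually happens (and this is the paper's argument) is that the assignment $[C,\iota,\cP,Q]\mapsto[C\ssm\{Q,\iota(Q)\},\iota,\cP]$ gives an \emph{isomorphism} of $\cM(S^\circ,\u,\cP)_\circ$ onto the open substack $\cM(S,\u,\cP,Q)^\cW$ of $\cM(S,\u,\cP,Q)$ where $Q$ avoids the Weierstrass locus. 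The complement $\partial_\cW$ is a smooth irreducible divisor, and the epimorphism on $\pi_1$ induced by the open inclusion has kernel normally generated by small loops around $\partial_\cW$. So the sequence is of ``complement of a divisor'' type, not of fibration type.

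The remaining work---which your sketch does not supply---is to identify those small loops with the Dehn twists generating $N_\circ$. The paper does this by embedding $\cM(S,\u,\cP,Q)$ into $\ccM(S,\cP,Q,R)$ and observing that $\partial_\cW$ lands in the boundary stratum of stable curves with a rational tail carrying only $Q$ and $R$; the monodromy around that stratum in $\G(S,\cP,Q,R)$ is precisely a Dehn twist about a curve bounding a twice-punctured disc, hence a symmetric separating twist on $S^\circ$ of the asserted type. Your description of the fiber as ``the space of configurations of the symmetric pair, which retracts onto $C/\iota$ minus the branch locus'' is the fiber of a \emph{different} map, namely $\cM(S^\circ,\u,\cP)_\circ\to\cM(S,\u,\cP)$ (forget $Q$ entirely); that is the content of Theorem~\ref{HypBirman} and the sequence~(\ref{fullHypBirmanEx}), which the paper derives \emph{after} Theorem~\ref{Q}, by composing with the Birman map $\U(S,\cP,Q)\to\U(S,\cP)$. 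Conflating the two maps is why your kernel computation goes astray.
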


\begin{proof}Let $\cM(S^\circ,\u,\cP)_\circ\to\cM(S^\circ,\u,\cP)$ be the \'etale covering associated to the subgroup $\U(S^\circ,\cP)_\circ$ 
of $\U(S^\circ,\cP)$, so that $\cM(S^\circ,\u,\cP)_\circ$ parameterizes marked complex hyperelliptic curves with an order on the set of punctures 
labeled by $Q$ and $R$. 

Let $\cM(S,\u,\cP,Q)^\cW$ be the open substack of $\cM(S,\u,\cP,Q)$
which parameterizes marked complex hyperelliptic curves $[C,\iota,\cP,Q]$ such that the distinguished point labeled by $Q$ is not a 
Weierstrass point of the curve $C$. 

Note that the complement $\partial_\cW:=\cM(S,\u,\cP,Q)\ssm\cM(S,\u,\cP,Q)^\cW$ is a smooth irreducible divisor of $\cM(S,\u,\cP,Q)$ and that
the assignment $[C,\iota,\cP, Q]\mapsto[C\ssm\{Q,\iota(Q)\},\iota,\cP]$
defines a natural isomorphism of moduli stacks: 
\[\cM(S,\u,\cP,Q)^\cW\sr{\sim}{\to}\cM(S^\circ,\u,\cP)_\circ.\]
 
Let $\cM(S,\cP,Q,R)$ be the \'etale covering of $\cM(S,\cP\cup\{Q,R\})$ defined by taking $Q$ and $R$ as distinguished and \emph{ordered}
marked points. The assignment $[C,\iota,\cP, Q]\mapsto[C,\cP,Q,\iota(Q)]$ then defines a natural embedding
$\cM(S,\u,\cP,Q)^\cW\hookra\cM(S,\cP,Q,R)$.
This extends to an embedding $\cM(S,\u,\cP,Q)\hookra\ccM(S,\cP,Q,R)$ which maps the divisor 
$\partial_\cW$ to the irreducible component of the DM boundary of $\ccM(S,\cP,Q,R)$ 
parameterizing stable curves with a rational tail marked by $Q$ and $R$.

Therefore, an element of the fundamental group $\pi_1(\cM(S,\u,\cP,Q)^\cW)$, which can be represented by a small loop
around the divisor $\partial_\cW$, corresponds in the hyperelliptic mapping class group $\U(S^\circ,\u,\cP)_\circ<\G(S^\circ,\cP)_\circ=\G(S,\cP,Q,R)$ 
to a Dehn twist about a symmetric simple closed curve on $S^\circ$ bounding a disc which contains no marked points and only 
the punctures labeled by $Q$ and $R$.

The embedding $\cM(S,\u,\cP,Q)^\cW\subset\cM(S,\u,\cP,Q)$ induces an epimorphism on fundamental groups
with kernel generated by small loops around the divisor $\partial_\cW$. This clearly implies all the claims of the theorem.
\end{proof}

There is a natural epimorphism $p_\circ\co\U(S^\circ,\cP)\to\U(S,\cP)$ induced by filling back with $Q$ and $R$ the two symmetric punctures 
of $S^\circ$. For $\cP=\emptyset$, from the short exact sequence~(\ref{HypEx}) and the standard genus $0$ Birman short exact 
sequence~(\ref{BirmanExact}), it follows that the kernel of this epimorphism is naturally isomorphic to the fundamental group 
$\pi_1(S_{/\u}\ssm\cB_\u,\,p_\u(Q))$, where a simple loop around a puncture of $S_{/\u}\ssm\cB_\u$ is sent to a half twist about a symmetric simple
closed curve of $S^\circ$ bounding a disc which only contains the punctures labeled by $Q$ and $R$.
Therefore, we get the hyperelliptic Birman short exact sequence (cf.\ Theorem~3.2 in \cite{BM} for the case in which $S$ is a closed surface):
\begin{equation}\label{fullHypBirmanEx}
1\to\pi_1(S_{/\u}\ssm\cB_\u,\,p_\u(Q))\to\U(S^\circ)\sr{p_\circ}{\to}\U(S)\to 1.
\end{equation}

Let us then consider the epimorphism $\U(S^\circ,\cP)_\circ\to\U(S,\cP)$ obtained by restriction of $p_\circ$ to the index $2$
subgroup $\U(S^\circ,\cP)_\circ$ of $\U(S^\circ,\cP)$.
This epimorphism factors through the epimorphism $\U(S^\circ,\cP)_\circ\to\U(S,\cP,Q)$ of Theorem~\ref{Q} and
the natural epimorphism $\U(S,\cP,Q)\to\U(S,\cP)$ of the Birman short exact sequence~(\ref{BirmanExact}). 

The epimorphism $\U(S^\circ,\cP)_\circ\to\U(S,\cP)$ is induced by the natural morphism of moduli stacks 
$\cM(S,\u,\cP,Q)^\cW\to\cM(S,\u,\cP)$ (defined by forgetting the marked point $Q$). 

If $\cP=\emptyset$, this morphism is a smooth curve over $\cM(S,\u)$ with fibers homeomorphic to the surface $S\ssm\cW_\cP(S)$. 
Since, as a topological stack, $\cM(S,\u)$ is a $K(\pi,1)$, from the associated long exact sequence of homotopy groups, it then follows:

\begin{theorem}[The hyperelliptic Birman short exact sequence]\label{HypBirman} There are natural short exact sequences:
\begin{equation}\label{HypBirmanEx}\begin{array}{ll}
&1\to\pi_1(S\ssm\cW_\cP(S),Q)\to\U(S^\circ)_\circ\to\U(S)\to 1\\
\mbox{and}\hspace{0.5cm}&\\
&1\to\pi_1(S\ssm\cW_\cP(S),Q)\to\PU(S^\circ)\to\PU(S)\to 1.
\end{array}
\end{equation}

Moreover, the subgroup $N_\circ$, defined in Theorem~\ref{Q}, identifies with the kernel of the natural epimorphism of fundamental groups
$\pi_1(S\ssm\cW_\cP(S),Q)\to\pi_1(S,Q)$. In particular, $N_\circ$ is a free group of infinite rank.
\end{theorem}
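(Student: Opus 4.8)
I would obtain the first sequence of~(\ref{HypBirmanEx}) directly from the fibration sketched before the statement: the morphism $\cM(S,\u,Q)^\cW\to\cM(S,\u)$ is a smooth curve with aspherical fiber $S\ssm\cW_\cP(S)$ over the $K(\U(S),1)$-stack $\cM(S,\u)$, and its total space satisfies $\pi_1(\cM(S,\u,Q)^\cW)\cong\U(S^\circ)_\circ$ via the isomorphism $\cM(S,\u,Q)^\cW\cong\cM(S^\circ,\u)_\circ$ established in the proof of Theorem~\ref{Q}; since $\pi_2(\cM(S,\u))=0$ and the fiber is connected, the homotopy long exact sequence collapses to $1\to\pi_1(S\ssm\cW_\cP(S),Q)\to\U(S^\circ)_\circ\to\U(S)\to1$. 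For the pure version I would check that the preimage of $\PU(S)$ under $\U(S^\circ)_\circ\to\U(S)$ is exactly $\PU(S^\circ)$: since $\cW_\cP(S^\circ)=\cW_\cP(S)$, $\cW_\circ(S^\circ)=\cW_\circ(S)$, $\cW_\dd(S^\circ)=\cW_\dd(S)=\emptyset$, and a class in $\U(S^\circ)_\circ$ already fixes the punctures $Q$ and $R$, such a class lies in $\ker\rho_\cW(S^\circ)\cap\PG(S^\circ)$ iff its image in $\U(S)$ lies in $\ker\rho_\cW(S)\cap\PG(S)=\PU(S)$. As the kernel $\pi_1(S\ssm\cW_\cP(S),Q)$ of $\U(S^\circ)_\circ\to\U(S)$ consists of classes acting trivially on every Weierstrass point and every puncture of $S^\circ$, it is contained in $\PU(S^\circ)$, so restricting the first sequence to the preimage of $\PU(S)$ produces the second sequence of~(\ref{HypBirmanEx}).

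To identify $N_\circ$, I would compare the fibration above with the curve $\cM(S,\u,\{Q\})\to\cM(S,\u)$ underlying the ordinary Birman sequence, using the open immersion $\cM(S,\u,Q)^\cW\hookra\cM(S,\u,Q)=\cM(S,\u,\{Q\})$. On fundamental groups this immersion induces the epimorphism $q\co\U(S^\circ)_\circ\to\U(S,\{Q\})$ of Theorem~\ref{Q} (whose kernel, restricted to the pure groups, is $N_\circ$); it lies over the identity of $\cM(S,\u)$; and, being obtained by forgetting that $Q$ is not a Weierstrass point, on the fibers of the two curves over $\cM(S,\u)$ it is the inclusion $S\ssm\cW_\cP(S)\hookra S$. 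Restricting everything to the preimages of $\PU(S)$ and passing to fundamental groups yields a morphism of short exact sequences
\[
\begin{array}{ccccccccc}
1 &\to& \pi_1(S\ssm\cW_\cP(S),Q) &\to& \PU(S^\circ) &\to& \PU(S) &\to& 1\\
 & & \downarrow & & \downarrow & & \| & & \\
1 &\to& \pi_1(S,Q) &\to& \PU(S,\{Q\}) &\to& \PU(S) &\to& 1,
\end{array}
\]
in which the bottom row is the pure Birman sequence~(\ref{BirmanExact}), the middle vertical arrow is $q$, and the left vertical arrow is the homomorphism $\iota_*$ induced by $S\ssm\cW_\cP(S)\hookra S$. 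Commutativity of the outer square ($q$ followed by the bottom projection is the top projection, as both merely forget $Q$) gives $\ker q\subseteq\pi_1(S\ssm\cW_\cP(S),Q)$, and commutativity of the left square identifies the restriction of $q$ to $\pi_1(S\ssm\cW_\cP(S),Q)$ with $\iota_*$; hence $N_\circ=\ker q=\ker\iota_*$, as asserted.

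It then remains to see that $N_\circ$ is free of infinite rank. Here $\pi_1(S\ssm\cW_\cP(S),Q)$ is a finitely generated free group of rank $\geq2$ (as $S$ is hyperbolic), the epimorphism $\iota_*$ onto $\pi_1(S,Q)$ has infinite quotient, and its kernel $N_\circ$ is nontrivial whenever $\cW_\cP(S)\neq\emptyset$ (it contains the nontrivial loops around the removed Weierstrass points). A nontrivial normal subgroup of infinite index in a finitely generated nonabelian free group is never finitely generated and, being a subgroup of a free group, is itself free; hence $N_\circ$ is free of countably infinite rank. The only step demanding genuine care is the claim, in the second paragraph, that $q$ is the map induced by the open immersion $\cM(S,\u,Q)^\cW\hookra\cM(S,\u,\{Q\})$ together with the computation of its restriction to fibers --- that is, exhibiting all three exact sequences as fibration sequences over the common base $\cM(S,\u)$ and checking that the two curves forgetting $Q$ are compatibly related; granting this, the identification of $N_\circ$ and the freeness statement are formal consequences plus a standard fact about free groups.
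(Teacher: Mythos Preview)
Your proposal is correct and follows essentially the same approach as the paper. The paper's argument is terse --- the text immediately preceding the theorem observes that $\cM(S,\u,Q)^\cW\to\cM(S,\u)$ is a smooth curve with fiber $S\ssm\cW_\cP(S)$ over a $K(\pi,1)$, and deduces the sequences from the homotopy long exact sequence --- leaving both the identification of $N_\circ$ and the infinite-rank claim entirely to the reader; your morphism-of-fibrations diagram and your normal-subgroup argument are precisely the natural ways to fill these in, and the verification that the preimage of $\PU(S)$ is $\PU(S^\circ)$ is a detail the paper does not spell out. Your caveat that the infinite-rank conclusion requires $\cW_\cP(S)\neq\emptyset$ is well taken and matches an implicit hypothesis in the paper's ``In particular'' clause.
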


\begin{remarks}\label{Bureau}\begin{enumerate}
\item Note that the fundamental group $\pi_1(S\ssm\cW_\cP(S),Q)$ identifies with the index $2$ subgroup of 
$\pi_1(S_{/\u}\ssm\cB_\u,\,p_\u(Q))$ associated to the unramified double covering $p_\u\co S\ssm\cW_\cP(S)\to S_{/\u}\ssm\cB_\u$.
\item As we already observed, there is a natural isomorphism $\PU(S^\circ)\cong\PG(S^\circ_{/\u}\ssm\cB_\u)$. Thus,
for $S=S_g$, we get the isomorphism $\PU(S^\circ)\cong\PG(S_{0,2g+3})$. The natural epimorphism $\PG(S_{0,2g+3})\to\PG(S_{0,2g+2})$, 
induced by filling in a puncture of $S_{0,2g+3}$, then factors through the epimorphism $\PG(S_{0,2g+3})\to\PU(S_g)$ of Theorem~\ref{HypBirman} 
and the epimorphism $\PU(S_g)\to\PG(S_{0,2g+2})$ provided by Birman-Hilden theory.
\end{enumerate}
\end{remarks}


\section{Generators for hyperelliptic mapping class groups}\label{generatorsHMCG}
\subsection{Generators for the pure hyperelliptic mapping class group}\label{genpure}
For genus $\geq 2$, there is a simple generating set for the pure hyperelliptic mapping class group. 

Let us consider first the case $\cP=\emptyset$ and $S$ either a closed surface of genus $\geq 2$ or $S=S_{1,1}$. The following analogue
for hyperelliptic mapping class groups of a result by Humphries for mapping class groups (cf.\ Proposition~2.1 in \cite{Humphries}) then holds:

\begin{proposition}\label{squares}For $(S,\u)$ a hyperelliptic surface of genus $\geq 1$, let $\cQ(S,\u)$ be the subgroup of $\U(S)$ 
generated by squares of nonseparating symmetric Dehn twists. 
\begin{enumerate}
\item For $S$ a closed surface of genus $\geq 2$, we have $\PU(S)=\cQ(S,\u)$.
\item For $S=S_{1,1}$, we have that $\u\notin\cQ(S,\u)$ and $\PU(S)=\langle\u\rangle\cdot\cQ(S,\u)$.
In particular, the natural epimorphism $\PU(S)\to\PG(S_{/\u},\cB_\u)$ induces an isomorphism $\cQ(S,\u)\cong\PG(S_{/\u},\cB_\u)=\PG(S_{0,4})$. 
\end{enumerate}
\end{proposition}

\begin{proof}Let $(\hat{S},\u)$ be a hyperelliptic surface of genus $g=g(S)$ with a single boundary component $\g$,  
fixed by $\u$, and no punctures. The relative hyperelliptic mapping class group $\U(\hat{S},\dd\hat{S})$ is described by the 
Birman-Hilden isomorphism (cf.\ Theorem~9.2 in \cite{FM}):
\[\U(\hat{S},\dd\hat{S})\cong\G(\hat{S}_{/\u},\cB_\u,\dd\hat{S}_{/\u})=\mathrm{B}_{2g+1},\]
where $\mathrm{B}_{2g+1}$ is the braid group in $2g+1$ strands and the isomorphism sends the Dehn twist 
$\tau_\g\in\U(\hat{S},\dd\hat{S})$ to the square of a generator of the center of $\mathrm{B}_{2g+1}$.
\medskip

\noindent (i): By Proposition~\ref{purehyp}, we have $\PU(S)=\U(S)^{[2]}$.
Hence, the abelian level $\U(S)^{[2]}$ and  $\cQ(S,\u)$ both have for image under the natural epimorphism $\U(S)\to\G(S_{/\u},\cB)$
the pure mapping class group $\PG(S_{/\u},\cB)\cong\PG(S_{0,2g+2})$. Therefore, 
$\U(S)^{[2]}=\langle\u\rangle\cdot\cQ(S,\u)$. Hence, in order to prove the first item of the proposition, it is enough to show that $\u\in\cQ(S,\u)$.
We borrow here an argument from the proof of Proposition~2.1 in \cite{Humphries}.

An identity, due to Birman and Hilden (cf.\ Section~4.4 in \cite{BirmanBraids}), implies that $\u\in\cQ(S,\u)$.
More precisely, let $\g_1,\ldots,\g_{2g+1}$ be a maximal chain of symmetric 
nonseparating simple closed curves on $S$. Then, we have:
\[\u=\tau_{\g_1}\tau_{\g_2}\cdot\ldots\cdot\tau_{\g_{2g}}\tau_{\g_{2g+1}}^2\tau_{\g_{2g}}\cdot\ldots\cdot\tau_{\g_2}\tau_{\g_1}.\]
The above identity easily implies that $\u$ is the identity element modulo the normal subgroup $\cQ(S,\u)$ and so that $\u\in\cQ(S,\u)$.
\medskip

\noindent
(ii): For $S=S_{1,1}$, the relative hyperelliptic mapping class group $\U(\hat{S},\dd\hat{S})$ is the universal 
central extension of the mapping class groups $\G(S)$:
\[1\to\tau_\g^\Z\to\U(\hat{S},\dd\hat{S})\to\G(S)\to 1.\]
The Birman-Hilden isomorphism $\U(\hat{S},\dd\hat{S})\cong B_3$ then identifies $\G(S)$ with the quotient of the braid group $B_3$ 
by the square of its center, so that a generator of the center of $B_3$ descends to the hyperelliptic involution $\u$ of $\G(S)$. 

The pure braid group $\mathrm{PB}_n$ contains the center of $\mathrm{B}_n$, splits over it and there is a natural isomorphism 
$\mathrm{PB}_n\cong\Z\times\PG(S_{0,n+1})$ (cf.\ Section~9.3 in \cite{FM}). 
By means of the Birman-Hilden isomorphism above, the pure hyperelliptic mapping class group $\PU(S)$ is then identified with 
the quotient of $\mathrm{PB}_3$ by the square of its center and so there is a natural isomorphism 
$\PU(S)\cong\Z/2\times\PG(S_{0,4})$ which sends the hyperelliptic involution to the factor $\Z/2$.

The Dehn twists about essential simple closed curves in $S_{0,4}$ naturally lift to squares of nonseparating Dehn twists in $\PU(S)$ 
and these generate a subgroup $\cQ(S,\u)$ isomorphic to $\PG(S_{0,4})$. This completes the proof of the second item of the proposition.
\end{proof}

For the general case, we need to introduce some notation. We say that a Dehn twist about a symmetric, resp.\ nonseparating, resp.\ 
separating simple closed curve on $S$ is \emph{symmetric}, resp.\ \emph{nonseparating}, resp.\ \emph{separating}.
A \emph{symmetric bitwist} is the product of two Dehn twists about a symmetric pair of simple closed curves.
A \emph{symmetric bounding pair map} is a bounding pair map about a symmetric pair of simple closed curves. We then have:

\begin{theorem}\label{generationpure} Let $(S,\u,\cP)$ be a marked hyperelliptic surface:
\begin{enumerate} 
\item For $g(S)\geq 2$, the pure hyperelliptic mapping class group $\PU(S,\cP)$ is generated by squares of nonseparating symmetric 
Dehn twists, by Dehn twists about symmetric separating simple closed curves bounding unmarked discs (with punctures)
and by symmetric bitwists about pairs of simple closed curves which bound unpunctured annuli. 
\item For $g(S)=1$ and $\sharp\cW_\circ(S)\geq 1$,
the group $\PU(S,\cP)$ is generated by the elements mentioned above together with a hyperelliptic involution 
$\td\u\in\PU(S,\cP)$ which lifts the hyperelliptic involution $\u\in\PU(S)$.
\end{enumerate}
\end{theorem}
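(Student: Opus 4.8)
The plan is to reduce the marked case to the unmarked case via the Birman exact sequences, and the unmarked case to the known generation of the genus-$0$ mapping class group via Birman--Hilden theory, then translate generators back and forth. First I would treat the unmarked surface $S$ with $\cP=\emptyset$. By the short exact sequence~(\ref{PHypEx}), $\PU(S)$ is an extension of $\PG(S_{/\u}\ssm\cB_\u)$ by $\langle\u\rangle\cap\PU(S)$, which is trivial as soon as $S$ carries at least two symmetric punctures or boundary components, and otherwise is $\langle\u\rangle$. It is classical that $\PG(S_{0,m})$ is generated by Dehn twists about simple closed curves enclosing exactly two punctures; such a curve lifts through $p_\u\co S\to S_{/\u}$ either to a symmetric nonseparating simple closed curve (when it encloses two branch points) whose square is hit, or to a symmetric bounding pair / symmetric separating curve bounding a disc with punctures (when it encloses two unbranched punctures, i.e.\ none, in the unmarked case) or a symmetric pair of curves bounding an annulus. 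The key point is that a half-twist downstairs lifts to an honest Dehn twist (or bitwist) upstairs, while an integer twist downstairs lifts to a \emph{square} of a twist when the curve is nonseparating and symmetric of the first kind; this is exactly the $\tau_\g^{2\Z}$ appearing in~(\ref{stabPHyp}). So I would first write down a lift of each standard generator of $\PG(S_{/\u}\ssm\cB_\u)$ and check it lies in the proposed list; then, using the splitting information from~(\ref{HypEx}), conclude that these lifts together with $\u$ (when $\langle\u\rangle\cap\PU(S)\neq\{1\}$, which is the genus~$1$, $\sharp\cW_\circ=1$ situation of part~(ii) — here the extension~(\ref{PHypEx}) does not split canonically and one must add the hyperelliptic involution $\td\u$ itself) generate $\PU(S)$.

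Next I would push in the marked points. Given $(S,\u,\cP)$, write $\cP=\{Q_1,\dots,Q_r\}$ and induct on $r$ using the first Birman sequence in~(\ref{BirmanExact}),
\[1\to\pi_1(S\ssm\{Q_1,\dots,Q_{j-1}\},Q_j)\to\PU(S,\{Q_1,\dots,Q_j\})\to\PU(S,\{Q_1,\dots,Q_{j-1}\})\to 1.\]
By induction the quotient is generated by the listed elements; I need generators of the point-pushing subgroup $\pi_1(S\ssm\{Q_1,\dots,Q_{j-1}\},Q_j)$ that are themselves on the list. A point-pushing map along a simple loop in $S\ssm\cP$ is a product of two Dehn twists about the two boundary components of a regular neighbourhood of the loop; when the loop is chosen symmetric (or its $\u$-image disjoint from it) these two curves form exactly a symmetric pair bounding an (unpunctured or once-punctured) annulus, so the push is a symmetric bitwist, or a product of a separating twist about a disc enclosing $Q_j$ and something already handled. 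The one subtlety is that $\pi_1$ of the punctured surface is generated by such \emph{symmetric} loops; since any simple loop is isotopic to one meeting a symmetric handle decomposition nicely, and point-pushing along a loop around a puncture is a separating twist about a disc with two punctures, this can be arranged. One must be a little careful that a point-push upstairs need not be $\u$-equivariant as a diffeomorphism, but it lies in $\PU(S,\cP)$ because it maps to the identity in $\PU(S)$, and it is a product of twists about curves whose \emph{union} is symmetric — which is all the statement requires.

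Finally I would assemble the two reductions, using the moduli-stack dictionary of Section~\ref{geoint} only to justify the exactness already invoked. The main obstacle is the careful bookkeeping of \emph{which} kind of symmetric curve each generator lifts to, and in particular verifying that integer (as opposed to half) twists about symmetric nonseparating curves are what appear — equivalently, that the generators of $\PG(S_{0,m})$ by twists around two-puncture curves lift to squares, not to twists themselves. This is where~(\ref{stabPHyp}), with its $\tau_\g^{2\Z}$ factor for nonseparating symmetric curves, does the real work: a symmetric nonseparating Dehn twist is not in $\PU$ but its square is, and that square maps to the generator of $\PG(S_{/\u}\ssm\cB_\u)$ around the image curve. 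The genus~$1$ case of part~(ii) requires the extra observation that $\PG(S_{0,m})$ with $m=2g+2+(\text{marked points})$ odd-vs-even parity forces $\langle\u\rangle\cap\PU(S,\cP)\ne\{1\}$ precisely when $\sharp\cW_\circ(S)$ is odd and there are no symmetric punctures available to kill $\u$, so the lift $\td\u$ must be added by hand; everything else goes through verbatim.
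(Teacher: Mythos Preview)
Your reduction strategy via~(\ref{PHypEx}) and the Birman sequences is the same overall architecture the paper uses, and your treatment of the marked points via point-pushing is essentially correct (the paper phrases it as: bounding pair maps about symmetric pairs bounding a marked unpunctured annulus generate the kernel of $\PG(S,\cP)\to\PG(S)$, and each such map equals a symmetric bitwist times the inverse square of a symmetric nonseparating twist). But there is a genuine gap at the core step.

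You write that $\langle\u\rangle\cap\PU(S)\neq\{1\}$ ``is the genus~$1$, $\sharp\cW_\circ=1$ situation of part~(ii)''. This is false: for a \emph{closed} surface $S$ of genus $\geq 2$ one also has $\u\in\PU(S)$ (indeed $\u$ acts as $-\mathrm{id}$ on $H_1(\ol S,\Z)$, so $\u\in\U(S)[2]=\PU(S)$ by Proposition~\ref{purehyp}). In that case the lifts of the two-point-enclosing generators of $\PG(S_{0,2g+2})$ give only the subgroup $\cQ(S,\u)$ generated by squares of nonseparating symmetric Dehn twists, and a priori one only knows $\PU(S)=\langle\u\rangle\cdot\cQ(S,\u)$. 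To get item~(i) one must prove $\u\in\cQ(S,\u)$, i.e.\ that the hyperelliptic involution is itself a product of squares of symmetric nonseparating twists. Nothing in your outline touches this; it is exactly the content of the Birman--Hilden chain identity
\[
\u=\tau_{\g_1}\tau_{\g_2}\cdots\tau_{\g_{2g}}\tau_{\g_{2g+1}}^{\,2}\tau_{\g_{2g}}\cdots\tau_{\g_2}\tau_{\g_1},
\]
for a maximal symmetric chain $\g_1,\ldots,\g_{2g+1}$, and it is what distinguishes item~(i) from item~(ii). (In genus~$1$ this identity is unavailable and indeed $\u\notin\cQ(S_{1,1},\u)$, which is why $\td\u$ must be added.)

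A secondary issue: by attacking a general unmarked $S$ directly through~(\ref{PHypEx}) rather than first stripping the symmetric punctures, you force yourself to lift generators of $\PG(S_{/\u}\ssm\cB_\u)$ corresponding to curves enclosing one or two \emph{non-branch} punctures. A curve enclosing two non-branch punctures lifts to a symmetric \emph{pair} of separating curves, each bounding a twice-punctured disc; the resulting bitwist is neither a twist about a single symmetric separating curve nor a bitwist about an unpunctured annulus, so it is not on the stated list and you would need a further argument. The paper sidesteps this by first applying Theorem~\ref{Q} to replace all symmetric puncture pairs by single marked points, reducing to $S$ closed (or $S_{1,1}$), where every downstairs puncture is a branch point and every two-point generator lifts to a square of a nonseparating symmetric twist.
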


\begin{proof}Since there is a natural isomorphism $\PU(S,\cP)\cong\PU(\ring{S},\cP)$, 
it is not restrictive to assume that $\partial S=\emptyset$.
For a similar reason, we may also assume that, for $g(S)\geq 2$, we have $\cW_\circ(S)=\emptyset$ and, for $g(S)=1$, we have $\sharp\cW_\circ(S)=1$. 

If $S$ has $n$ pairs of symmetric punctures, by Theorem~\ref{Q} and a simple induction, there is a natural epimorphism 
$\PU(S,\cP)\to\PU(\ol{S},\cP\cup\{Q_1,\ldots,Q_n\})$
whose kernel is generated by Dehn twists about symmetric separating simple closed curves on $S$ bounding 
discs which contain no marked points and at least a pair of symmetric punctures. Thus, we are reduced to prove 
Theorem~\ref{generationpure} for $S$ a closed surface of genus $\geq 2$ and for $S=S_{1,1}$.

A bounding pair map $\tau_{\g_0}\tau_{\g_1}^{-1}$ about a symmetric pair $\g_0,\g_1$ of simple closed curves which bound a marked but 
unpunctured annulus is the product of the symmetric bitwist $\tau_{\g_0}\tau_{\g_1}$ with the inverse of $\tau_{\g_1}^{2}$ and these
are both part of our generating set. Since the kernel of the natural epimorphism $\PG(S,\cP)\to\PG(S)$ is generated 
by bounding pair maps about symmetric pairs of simple closed curves 
which  bound a marked unpunctured annulus, the theorem is reduced to the case $\cP=\emptyset$.
The conclusion then follows from Proposition~\ref{squares}.
\end{proof}

\begin{corollary}\label{simplyconnected}Let $(S,\u,\cP)$ be a marked hyperelliptic surface:
\begin{enumerate}
\item For $g(S)\geq 2$, there holds $\pi_1(\CPM(S,\u,\cP))=\{1\}$.
\item For $g(S)=1$ and $\sharp\cW_\circ(S)\geq 1$, there holds $\pi_1(\CPM(S,\u,\cP))\cong\Z/2$.
\end{enumerate}
\end{corollary}

\begin{proof}This follows from the geometric interpretation of the generators of the fundamental group of $\mathrm{P}\cM(S,\u,\cP)$
as loops about the irreducible components of its DM boundary and Theorem~\ref{generationpure}.
\end{proof}

We can determine the gerbe structure of the moduli stacks considered above:

\begin{proposition}\label{gerbe}Let $(S,\u)$ be a hyperelliptic surface:
\begin{enumerate}
\item For $S=S_{1,1}$, the DM compactification $\ccM(S)^{[2]}$ of the abelian level structure $\cM(S)^{[2]}$ is a trivial 
$\Z/2$-gerbe over the moduli stack $\CPM(S_{/\u},\cB_\u)$.
\item For $S$ closed of genus $\geq 2$, the abelian level structure $\cM(S,\u)^{[2]}$ is a trivial 
$\Z/2$-gerbe over the moduli stack $\PM(S_{/\u},\cB_\u)$.
\item For $S$ closed of genus $\geq 2$, the DM compactification $\ccM(S,\u)^{[2]}$ of $\cM(S,\u)^{[2]}$ is \emph{not} a trivial 
$\Z/2$-gerbe over the moduli stack $\CPM(S_{/\u},\cB_\u)$.
\end{enumerate}
\end{proposition}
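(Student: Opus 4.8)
The plan is to identify each of the three stacks in the statement as a $\Z/2$-gerbe over a moduli stack of pointed genus $0$ curves and then to extract (non)triviality from the fundamental group computations of Corollary~\ref{simplyconnected}. For the reductions: by Proposition~\ref{purehyp} and its proof one has $\PU(S)=\U(S)[2]$ both for $S=S_{1,1}$ and for $S$ closed of genus $\geq 2$, and moreover $\G(S)=\U(S)$ for $S=S_{1,1}$; hence in every case $\cM(S,\u)^{[2]}$ is canonically $\PM(S,\u)$, and $\ccM(S)^{[2]}=\ccM(S,\u)^{[2]}=\CPM(S,\u)$. Since $\u\in\PU(S)$ is central, every stable hyperelliptic curve carries it as an automorphism of order $2$; by~(\ref{gerbes}) the morphism $\PM(S,\u)\to\PM(S_{/\u},\cB_\u)$ sending a curve to its quotient together with the branch divisor is the associated $\Z/2$-gerbe, and I would use that this prolongs to a $\Z/2$-gerbe $\CPM(S,\u)\to\CPM(S_{/\u},\cB_\u)$, with $\CPM(S_{/\u},\cB_\u)\cong\ol{M}_{0,4}\cong\P^1$ if $S=S_{1,1}$ and $\CPM(S_{/\u},\cB_\u)\cong\ol{M}_{0,2g+2}$ if $S=S_g$ is closed. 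The one genuinely delicate point is this prolongation: that $(C,\iota)\mapsto(C/\iota,\cB_\iota)$ extends over the Deligne--Mumford boundary and remains a $\Z/2$-gerbe onto $\CPM(S_{/\u},\cB_\u)$ --- equivalently, that the hyperelliptic involution survives as a central order-$2$ element of the inertia of every stable hyperelliptic curve and that rigidifying by it gives exactly the Deligne--Mumford moduli of the quotient stable pointed genus $0$ curves. This is where I would bring in the theory of admissible double covers; granting it, the rest is formal.

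For part (ii) I would exhibit a section of the $\Z/2$-gerbe $\PM(S,\u)\to\PM(S_{/\u},\cB_\u)$, a $\Z/2$-gerbe with a section being trivial. Identifying the base with $M_{0,2g+2}$, let $\mathcal P\to M_{0,2g+2}$ be the universal curve, a $\P^1$-bundle with $2g+2$ disjoint tautological sections $\sigma_1,\dots,\sigma_{2g+2}$, and put $D=\sum_i\sigma_i$. Since $\mathrm{Pic}(M_{0,2g+2})=0$ (e.g.\ because $M_{0,2g+2}$ is the complement of a hyperplane arrangement in an affine space) and $\mathcal P$ has a section, one gets $\mathcal P\cong M_{0,2g+2}\times\P^1$ and $\mathrm{Pic}(\mathcal P)\cong\Z$, generated by the common class $h$ of the $\sigma_i$; hence $[D]=(2g+2)h$ is $2$-divisible in $\mathrm{Pic}(\mathcal P)$ and the double cover $\mathcal C\to\mathcal P$ branched along $D$ exists. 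The composite $\mathcal C\to M_{0,2g+2}$ is a smooth family of genus $g$ hyperelliptic curves whose quotient with branch divisor is the tautological family on $M_{0,2g+2}$; this family, together with the ordering of its $2g+2$ branch points --- which by $\PU(S)=\U(S)[2]$ is tantamount to a level $2$ structure --- is classified by a morphism $M_{0,2g+2}\to\PM(S,\u)=\cM(S,\u)^{[2]}$ whose composite with the gerbe projection is the identity. This is the desired section, so the gerbe is trivial.

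For part (iii): the total space of a trivial $\Z/2$-gerbe $\mathcal Y\times B\Z/2$ has fundamental group $\pi_1(\mathcal Y)\times\Z/2$, which contains $\Z/2$; since $\pi_1(\CPM(S,\u))=\{1\}$ by Corollary~\ref{simplyconnected}(ii), the stack $\ccM(S,\u)^{[2]}=\CPM(S,\u)$ cannot be a trivial $\Z/2$-gerbe over $\CPM(S_{/\u},\cB_\u)$. For part (i): as $H^{2}(\P^1,\Z/2)\cong\Z/2$, there are exactly two $\Z/2$-gerbes over $\CPM(S_{/\u},\cB_\u)\cong\P^1$; the trivial one has total space with $\pi_1\cong\Z/2$, while for the nontrivial one the connecting homomorphism $\pi_2(\P^1)=\Z\to\pi_1(B\Z/2)=\Z/2$ in the homotopy sequence of $B\Z/2\to\mathcal G\to\P^1$ is reduction modulo $2$, hence onto, so $\pi_1(\mathcal G)=\{1\}$. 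Since $\pi_1(\ccM(S)^{[2]})=\pi_1(\CPM(S,\u))\cong\Z/2$ by Corollary~\ref{simplyconnected}(i), our gerbe is the trivial one, proving (i). In short, the whole proposition reduces to the gerbe identifications of the first paragraph --- the boundary prolongation being the only subtle ingredient --- together with Corollary~\ref{simplyconnected} and the Picard-group computation above.
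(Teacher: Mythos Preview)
Your argument is correct. For parts~(ii) and~(iii) it is essentially the paper's own proof: the paper also produces a section over $M_{0,2g+2}$ by constructing the universal hyperelliptic double cover (citing \cite{GT} rather than spelling out the Picard computation), and for~(iii) it likewise uses the simple connectedness of $\CPM(S,\u)$ from Corollary~\ref{simplyconnected}(ii), phrased as saying that a trivializing section, being \'etale, would have to be an isomorphism. The boundary prolongation you flag is not addressed explicitly in the paper either; it is taken as part of the standing identification of the hyperelliptic locus with the admissible-double-cover compactification.

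The genuine difference is in part~(i). The paper invokes item~(i) of Lemma~\ref{squares} directly: the decomposition $\PU(S_{1,1})=\langle\u\rangle\cdot\cQ(S_{1,1},\u)$ with $\cQ(S_{1,1},\u)\cong\PG(S_{0,4})$ gives a group-theoretic splitting, hence a section of the gerbe. You instead classify $\Z/2$-gerbes over $\CPM(S_{/\u},\cB_\u)\cong\P^1$ via $H^2(\P^1,\Z/2)\cong\Z/2$, compute $\pi_1$ of each candidate using the homotopy fibre sequence, and select the right one by matching against Corollary~\ref{simplyconnected}(i). Your route is slightly more indirect (Corollary~\ref{simplyconnected} itself rests on Lemma~\ref{squares}), but it has the virtue of treating the compactified gerbe head-on, whereas the paper's one-line argument leaves implicit why the splitting over the open part extends across the boundary. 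Both approaches are valid; yours packages parts~(i) and~(iii) uniformly as fundamental-group computations, while the paper's is more direct for~(i) once the splitting lemma is in hand.
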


\begin{proof}(i): This is well known (see, for instance, item (h) of Theorem~1.4 in \cite{GT}) but also an almost immediate consequence of item (i) 
of Proposition~\ref{squares}.
\medskip

\noindent
(ii): Assigning to a smooth projective genus $0$ curve $C$ endowed with an ordered set of marked points $\cB_\u$, the hyperelliptic curve $C'$ such
that $C'/\langle\u\rangle\cong C$ with branch locus $\cB_\u$, defines a morphism $\bar s\co\PM(S_{/\u},\cB_\u)\to\cM(S,\u)$ 
(cf.\ item (h) of Theorem~1.4 in \cite{GT}), which, composed with the
natural morphism $\cM(S,\u)\to\cM(S_{/\u},\cB_\u)$, is just the natural morphism $\PM(S_{/\u},\cB_\u)\to\cM(S_{/\u},\cB_\u)$.

Since $\cM(S)^{[2]}\cong\cM(S,\u)\times_{\cM(S_{/\u},\cB_\u)}\PM(S_{/\u},\cB_\u)$, the morphism $\bar s$ lifts to an \'etale morphism
$s\co\PM(S_{/\u},\cB_\u)\to\cM(S)^{[2]}$ which is a section of $\cM(S,\u)^{[2]}\to\PM(S_{/\u},\cB_\u)$ and so trivializes this $\Z/2$-gerbe.
\medskip

\noindent
(iii): Let us assume, by contradiction, that the gerbe $\ccM(S,\u)^{[2]}\to\CPM(S_{/\u},\cB_\u)$ is trivial.
A splitting of this gerbe is a (representable) morphism $\iota\co\CPM(S_{/\u},\cB_\u)\to\ccM(S,\u)^{[2]}$ which composed with the latter is the identity
and so is \'etale. By (i) of Corollary~\ref{simplyconnected}, the stack $\ccM(S,\u)^{[2]}$ is simply connected which implies that $\bar\iota$ and then 
$\iota$ is an isomorphism which gives a contradiction.
\end{proof}





\subsection{Generators for the full hyperelliptic mapping class group}\label{genfull}
We have the following generalizations of a well known result by Birman and Hilden (cf.\ \cite{BH}):

\begin{proposition}\label{generationfull}Let $(S,\u,\cP)$ be a marked hyperelliptic surface of genus $\geq 1$ such that $\cW_\circ(S)=\cW_\dd(S)=\emptyset$. 
Then the hyperelliptic mapping class group $\U(S,\cP)$ is generated by nonseparating symmetric Dehn twists and by half twists about simple closed curves 
which bound a disc containing only: either a pair of marked points, a pair of symmetric punctures or a pair of symmetric boundary components.
\end{proposition}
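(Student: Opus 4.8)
The plan is to bootstrap from the already established generating set for the pure hyperelliptic mapping class group (Theorem~\ref{generationpure}, Lemma~\ref{squares}) by analysing the quotient $\U(S,\cP)/\PU(S,\cP)$. Since $\cW_\circ(S)=\cW_\dd(S)=\emptyset$, restriction gives $\U(S,\cP)\cong\U(\td S,\cP)$ for the surface $\td S$ with the Weierstrass points and boundary filled in, so as in the proof of Theorem~\ref{generationpure} it is not restrictive to take $\dS=\emptyset$ and, modulo the kernel of $\PG(S,\cP)\to\PG(S)$ which is generated by symmetric bounding pair maps, to reduce to a manageable normal form. First I would recall that, by Proposition~\ref{purehyp} and the representation $\rho_\cW$, the group $\PU(S,\cP)$ is a normal subgroup of $\U(S,\cP)$ of finite index, and the quotient surjects onto the symmetric group $\Sigma_{\cW_\cP(S)}$ together with the permutation action on $\cP$ and on the symmetric punctures/boundary. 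So it suffices to show: (a) $\PU(S,\cP)$ lies in the subgroup $H$ generated by the proposed elements, and (b) $H$ surjects onto $\U(S,\cP)/\PU(S,\cP)$.

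For (a): the generators of $\PU(S,\cP)$ from Theorem~\ref{generationpure} are squares of nonseparating symmetric Dehn twists (visibly in $H$), symmetric bitwists about pairs bounding an unpunctured annulus — which are products of two nonseparating symmetric twists, hence in $H$ — and Dehn twists about symmetric separating curves bounding an unmarked disc with punctures. For the last type, since $\cW_\circ(S)=\emptyset$ the punctures come in symmetric pairs, and such a separating twist is a product of squares of the half twists about curves enclosing a single symmetric pair of punctures; each such square is a full twist $\tau_\delta$ about a symmetric separating curve $\delta$ enclosing that pair, and I would check that $\tau_\delta$ itself is expressible via a chain relation (lantern-type or chain relation in the subsurface) in terms of nonseparating symmetric twists — this is the standard trick turning a separating twist on a genus-$\geq 1$ symmetric subsurface into a product of nonseparating ones. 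In the genus-$1$ case one must also account for the lift $\td\u$ of the hyperelliptic involution, but by the Birman--Hilden identity $\u=\tau_{\g_1}\tau_{\g_2}\cdots\tau_{\g_{2g+1}}^2\cdots\tau_{\g_1}$ recalled in Lemma~\ref{squares}, $\td\u$ is already a product of symmetric nonseparating twists, hence in $H$.

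For (b): I would exhibit explicit elements of $H$ realising the generators of the permutation quotient. A half twist about a curve bounding a disc containing exactly a pair of marked points of $\cP$ transposes those two marked points and fixes everything else, so these generate the $\Sigma_\cP$-part. A half twist about a curve bounding a disc containing a pair of symmetric punctures, respectively symmetric boundary components, realises the transposition of that pair inside $\Sigma_{\cP_\circ}$, resp.\ $\Sigma_{\cP_\dd}$; together with the already-available pure elements these generate the full permutation action on punctures and boundary. Finally, for the $\Sigma_{\cW_\cP(S)}$ factor: a half twist about a symmetric curve surrounding exactly one symmetric pair can be arranged (after passing to $\td S$ where Weierstrass points became punctures of the quotient) to induce via $\rho_\cW$ an adjacent transposition of Weierstrass points; alternatively a nonseparating symmetric Dehn twist already acts nontrivially on $\cW_\cP(S)$ through $\rho_\cW$, and I would argue that these images generate $\Sigma_{\cW_\cP(S)}$ using the explicit description of the image of $\U(S)$ in $\G(S_{/\u}\ssm\cB_\u)$ via Birman--Hilden, namely that it contains the braid-group image whose abelianisation-to-$\Sigma$ map is onto.

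\textbf{Main obstacle.} The delicate point is (b) for the Weierstrass-point permutations together with a careful bookkeeping that the \emph{same} half twists simultaneously generate the pure part and surject onto the whole finite quotient — i.e.\ making sure no relation among the chosen elements forces them into a proper subgroup of $\U(S,\cP)$. Concretely, the hard part will be verifying that the half twists about discs containing a single symmetric pair of punctures/boundary, which are the only ``new'' generators beyond Theorem~\ref{generationpure}, together with the nonseparating symmetric twists, hit every coset of $\PU(S,\cP)$; this is where the Birman--Hilden dictionary and the explicit finite-index description of the image in the genus-$0$ mapping class group of $S_{/\u}\ssm\cB_\u$ do the real work, and I expect to invoke the standard braid-group generation statement to close the argument.
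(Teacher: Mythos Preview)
Your route---bootstrap from the pure case (Theorem~\ref{generationpure}) and then hit every coset of $\PU(S,\cP)$---is genuinely different from the paper's, which instead peels off the marking and the symmetric punctures via successive Birman-type short exact sequences. The paper first uses the ordinary Birman sequence for $\U(S,\cP)\to\U(S)$ (kernel the surface-braid group, generated by bounding pair maps and the half twists about pairs of marked points), then reduces to $\partial S=\emptyset$, and finally invokes the \emph{full} hyperelliptic Birman sequence~(\ref{fullHypBirmanEx}),
\[
1\to\pi_1(S_{/\u}\ssm\cB_\u,p_\u(Q))\to\U(S^\circ)\to\U(S)\to 1,
\]
whose kernel is generated by the half twists about discs containing exactly the symmetric pair $\{Q,R\}$; iterating strips off all symmetric punctures and lands on the classical closed Birman--Hilden theorem. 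No analysis of the finite quotient $\U/\PU$ is needed.

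Your step~(a) has a real gap. Among the generators of $\PU(S,\cP)$ supplied by Theorem~\ref{generationpure} are Dehn twists $\tau_\alpha$ about symmetric separating curves bounding an unmarked disc containing \emph{several} symmetric pairs of punctures (the proof of Theorem~\ref{generationpure} explicitly allows ``at least a pair''). Your proposed reduction---write $\tau_\alpha$ as a product of full twists $\tau_\delta$ about curves enclosing a single symmetric pair, then express each $\tau_\delta$ by a chain relation in nonseparating symmetric twists---does not work as stated: there are no nonseparating curves inside a punctured disc, so no chain relation applies to $\tau_\delta$ (though $\tau_\delta$ is in $H$ anyway, as the square of one of your half-twist generators). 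More seriously, inside a $2k$-punctured disc the half twists you allow correspond, in the braid group $B_{2k}$, only to the commuting generators $\sigma_1,\sigma_3,\ldots,\sigma_{2k-1}$ (and their $H$-conjugates), and these do not generate the central full twist $\Delta^2=\tau_\alpha$; so showing $\tau_\alpha\in H$ genuinely requires input from the complementary positive-genus side, which your sketch does not provide. The paper sidesteps this entirely because the hyperelliptic Birman kernel is generated by half twists about discs with a \emph{single} pair, so multi-pair twists never need to be expressed.
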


\begin{proof}


The kernel of the natural epimorphism $\U(S,\cP)\to\U(S)$ identifies with the fundamental group of the configuration space
of $\sharp\cP$ points on the surface $S$ and it is well known that this group is generated by bounding pair maps and half twists about simple 
closed curves which bound a disc containing a pair of marked points. We are then reduced to prove the proposition for the
hyperelliptic mapping class group $\U(S)$.

It is easy to reduce to the case when $\partial S=\emptyset$.
By the hyperelliptic Birman short exact sequence~(\ref{fullHypBirmanEx}), we can then reduce further to the case when $S$ is a closed surface, 
that is to say to the case treated by Birman and Hilden.
\end{proof}

\begin{proposition}\label{generators}Let $(S,\u,\cP)$ be either a marked hyperelliptic closed surface of genus $\geq 2$ or a marked $1$-punctured 
surface of genus $1$. Then, the hyperelliptic mapping class group $\U(S,\vec\cP)$ is generated by nonseparating symmetric Dehn twists.
\end{proposition}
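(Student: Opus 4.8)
The plan is to reduce to the unmarked case and then appeal to Proposition~\ref{generationfull}. Fix $Q\in S\ssm\cP$ and use the second Birman short exact sequence of~\eqref{BirmanExact},
\[1\to\pi_1(S\ssm\cP,Q)\to\U(S,\ovr{\cP\cup\{Q\}})\to\U(S,\vec\cP)\to 1.\]
A nonseparating symmetric Dehn twist of $(S,\u,\cP)$ is supported on a curve that may be isotoped off $Q$, hence lifts to a nonseparating symmetric Dehn twist of $(S,\u,\cP\cup\{Q\})$. As an extension is generated by its kernel together with lifts of a generating set of the quotient, it suffices, by induction on $\sharp\cP$, to prove first that the point-pushing subgroup $\pi_1(S\ssm\cP,Q)$ lies in the subgroup generated by nonseparating symmetric Dehn twists, and then the statement for $\cP=\emptyset$. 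For the first point one argues as in the reduction carried out in the proof of Theorem~\ref{generationpure}: the point-pushing subgroup is generated by bounding pair maps $\tau_\a\tau_\b^{-1}$ about symmetric pairs $\{\a,\b\}$ of nonseparating simple closed curves cobounding an unpunctured, possibly marked, annulus. Since $\a$ and $\b$ cobound an annulus they are isotopic in $S$, so each has symmetric image in $S$ and is therefore a nonseparating symmetric curve of $(S,\u,\cP\cup\{Q\})$; thus $\tau_\a,\tau_\b$ are nonseparating symmetric Dehn twists and the point-pushing subgroup is contained in the group they generate.

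It remains to treat $\cP=\emptyset$. If $S=S_g$ with $g\geq 2$, then $\cW_\circ(S)=\cW_\dd(S)=\emptyset$ and $S$ carries no symmetric punctures or boundary components, so the half-twist generators of Proposition~\ref{generationfull} do not occur and that proposition gives at once that $\U(S)$ is generated by nonseparating symmetric Dehn twists. If $S=S_{1,1}$, then $\U(S)=\G(S)$ is the mapping class group of the once-punctured torus, which is isomorphic to $\SL_2(\Z)$ and generated by the Dehn twists about two nonseparating simple closed curves meeting in one point; since the hyperelliptic involution is central in $\G(S)$, acting as $-\mathrm{id}$ on $H_1$, every simple closed curve of $S$ is symmetric, and these two twists are therefore nonseparating symmetric Dehn twists.

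The main obstacle is the assertion, used in the reduction, that the point-pushing subgroup is generated by symmetric bounding pair maps whose constituent curves are nonseparating in $S$. The difficulty is localised in the pushes of $Q$ around a small loop encircling a marked point of $\cP$, which a priori produce a Dehn twist about a symmetric separating simple closed curve bounding a disc with a pair of marked points. One disposes of these with the help of the explicit generating set for $\PU(S,\cP)$ given by Theorem~\ref{generationpure} together with Lemma~\ref{squares}, which expresses the symmetric separating Dehn twists that occur in $\PU(S)$ as products of squares of nonseparating symmetric ones; the bookkeeping consists in checking that every symmetric bitwist produced along the way has nonseparating constituent curves.
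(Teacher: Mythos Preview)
Your strategy is exactly the paper's: reduce to the unmarked case by showing the point-pushing kernel lies in the subgroup generated by nonseparating symmetric Dehn twists, then invoke the closed case. The paper does this in one step via the epimorphism $\U(S,\vec\cP)\to\U(S)$, whose kernel is the fundamental group of the ordered configuration space of $\sharp\cP$ points on $S$; you do it inductively one point at a time. Both are fine.

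The gap is in your final paragraph. You correctly notice that the small loop in $\pi_1(S\ssm\cP,Q)$ encircling a point $P\in\cP$ pushes to the Dehn twist about a \emph{separating} curve bounding a disc with the two marked points $P,Q$. But your proposed remedy via Theorem~\ref{generationpure} and Lemma~\ref{squares} does not address this: Lemma~\ref{squares} concerns the unmarked closed case, and the generating set of Theorem~\ref{generationpure} still contains symmetric bitwists whose constituent curves may be separating, so the ``bookkeeping'' you defer is the whole content of the reduction.

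The actual fix, which is what the paper uses implicitly, is much more direct: since $g(S)\geq 1$, the free group $\pi_1(S\ssm\cP,Q)$ admits a free generating set consisting entirely of simple loops whose free homotopy class in $S$ is a \emph{nonseparating symmetric} curve. Concretely, start from loops freely homotopic to a standard chain $\g_1,\ldots,\g_{2g}$ of symmetric nonseparating curves on $S$, and for each extra generator replace the peripheral loop $c_j$ by a curve isotopic in $S$ to $\g_1$ but passing on the other side of the first $j$ points of $\cP$. The point-push of each such generator is then a bounding pair map about two curves which are both nonseparating and symmetric, and no separating twists ever appear. With this choice the obstacle you flag simply does not arise, and the detour through Theorem~\ref{generationpure} can be dropped.
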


\begin{proof}This is essentially the same proof as that of Proposition~\ref{generationfull}. We only need to observe that the kernel of the natural 
epimorphism $\U(S,\vec\cP)\to\U(S)$ identifies with the fundamental group of the configuration space of $\sharp\cP$ \emph{ordered} points on 
the surface $S$. The latter, for $g(S)\geq 1$, is generated by bounding pair maps which are products of powers of nonseparating symmetric Dehn twists.
\end{proof}


\section{Homotopy type of the complex of symmetric nonseparating curves}\label{homtype}
Let $(S,\u,\cP)$ be a marked hyperelliptic surface without boundary and Weierstrass punctures. 
In Section~\ref{complexsym}, we defined a version of the nonseparating curve complex for the hyperelliptic mapping class group $\U(S,\cP)$ of $(S,\u,\cP)$. 
Let us recall that the latter, denoted by $C_\mathrm{ns}(S\ssm\cP)$, is the simplicial complex whose $k$-simplices consist of unordered 
$(k+1)$-tuples $\{\g_0,\ldots,\g_k\}$ of distinct isotopy classes of pairwise disjoint simple closed curves on $S\ssm\cP$ such that 
$S\ssm\{\g_0,\ldots,\g_k\}$ is connected and that the complex of symmetric nonseparating curves $C_\mathrm{ns}(S,\u,\cP)$ is then 
defined to be the full subcomplex of $C_\mathrm{ns}(S\ssm\cP)$ generated by isotopy classes of symmetric nonseparating simple closed curves. 

Let us observe that filling in a puncture induces surjective maps $C_\mathrm{ns}(S\ssm(\cP\cup\{Q\}))\to C_\mathrm{ns}(S\ssm\cP)$ and 
$C_\mathrm{ns}(S,\u,\cP\cup\{Q\})\to C_\mathrm{ns}(S,\u,\cP)$. Both maps, or, better, their geometric realizations, 
are homotopy equivalences. For the first map, this follows from Harer's results (cf.\ Theorem~1.1 in \cite{Harer}) but we want to give here a more direct 
proof which can be easily adapted to the hyperelliptic case:

\begin{proposition}\label{homotopyequiv1}There is a natural surjective map $C_\mathrm{ns}(S_{g,n+1})\to C_\mathrm{ns}(S_{g,n})$ induced by filling in 
the last puncture, which is a homotopy equivalence, in the sense that the geometric realization of this map is a homotopy equivalence. In particular, 
the homotopy type of $C_\mathrm{ns}(S_{g,n})$ is independent of $n$. 
\end{proposition}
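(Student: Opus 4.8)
The plan is to construct an explicit homotopy inverse-type argument at the level of the ``forgetting a puncture'' map, rather than appealing to Harer's fibration results. Write $p\co C_\mathrm{ns}(S_{g,n+1})\to C_\mathrm{ns}(S_{g,n})$ for the simplicial map induced by filling in the last puncture $Q$; surjectivity is clear since any nonseparating simple closed curve on $S_{g,n}$ can be isotoped off $Q$. The fibers of $p$ over a simplex are contractible-looking but not literally simplices, so the naive ``$p$ has a section'' strategy fails. Instead I would use a standard tool for showing that a simplicial surjection is a homotopy equivalence: verify that for every simplex $\s$ of the target, the preimage subcomplex $p^{-1}(\mathrm{Star}(\s))$ (equivalently, an appropriate ``fiber over a simplex'') is contractible, and then invoke a nerve-type / Quillen-fiber lemma for simplicial complexes. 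Concretely, fixing a simplex $\s=\{\g_0,\dots,\g_k\}$ in $C_\mathrm{ns}(S_{g,n})$, realized disjointly and missing $Q$, the preimage consists of all symmetric-free (here just: all nonseparating) multicurve systems on $S_{g,n+1}$ that map into $\s$ after filling $Q$; I want to show this is contractible.

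The key geometric step is the following: the space of isotopy classes of simple closed curves $\g'$ on $S_{g,n+1}$ which become isotopic to a fixed nonseparating $\g$ on $S_{g,n}$ after filling in $Q$ is parametrized by the ways $\g'$ can be routed relative to $Q$, and any two such can be connected by curves that are still nonseparating and still disjoint from any prescribed finite disjoint family lying away from $Q$. In other words, ``re-choosing the route past $Q$'' is a contractible amount of freedom, and it can be performed one curve at a time without destroying disjointness or nonseparation. I would package this as: $p$ restricts to a surjection of preimage-of-star complexes with contractible fibers in the required sense, so by the simplicial Quillen fiber lemma (or by building an explicit deformation retraction of $p^{-1}(\mathrm{Star}(\s))$ onto the ``preferred lift'' subcomplex, which is itself a cone), the map $p$ is a homotopy equivalence. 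The ``in particular'' statement then follows by induction on $n$, composing the equivalences $C_\mathrm{ns}(S_{g,n+1})\simeq C_\mathrm{ns}(S_{g,n})$.

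I expect the main obstacle to be the disjointness bookkeeping in the fiber-contractibility step: when one ``pushes'' a candidate lift $\g'$ of $\g_i$ to the preferred lift, the intermediate curves must remain disjoint from the (already chosen, already preferred) lifts of the other $\g_j$ and must remain nonseparating, and doing this simultaneously over a whole simplex requires an ordered/inductive argument (handle one $\g_i$ at a time, in a way that the moves used for $\g_i$ are supported in a subsurface disjoint from the rest). A clean way to organize this is to choose, once and for all, an embedded arc $\alpha$ from $Q$ to a fixed basepoint avoiding chosen representatives, use it to define a canonical ``forget $Q$'' retraction on the relevant subsurface, and check that this retraction is simplicial on the preimage-of-star complex and is a deformation retraction onto its image. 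This also has the advantage, as the statement promises, of adapting verbatim to the hyperelliptic setting $C_\mathrm{ns}(S,\u,\cP)$ in the next proposition, since one only needs $\alpha$ (hence all the moves) to be chosen $\u$-equivariantly.
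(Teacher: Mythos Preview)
Your approach is genuinely different from the paper's, and it has a real gap at the fiber-contractibility step.

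The obstruction is this: in $C_\mathrm{ns}(S_{g,n+1})$, two distinct lifts $\g'_1,\g'_2$ of the same nonseparating curve $\g$ on $S_{g,n}$ can \emph{never} lie in a common simplex. If they are disjoint, they become isotopic after filling in $Q$, hence cobound an annulus on $S_{g,n}$ containing $Q$; removing both from $S_{g,n+1}$ then separates off a once-punctured annulus, so $\{\g'_1,\g'_2\}$ fails the connected-complement condition defining $C_\mathrm{ns}$. Consequently the open stars $\Star(\g'_1)$ and $\Star(\g'_2)$ in $C_\mathrm{ns}(S_{g,n+1})$ are disjoint, and $p^{-1}(\Star([\g]))$ is an infinite disjoint union of contractible pieces, one for each lift of $\g$ --- not contractible. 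The same obstruction kills the closed-simplex and poset-fiber versions of Quillen's lemma (over a vertex the fiber is the discrete infinite set of lifts), and it also shows there is no chain of disjoint lifts of $\g$ connecting $\g'_1$ to $\g'_2$ while staying inside the putative fiber. Your intuition that ``routing past $Q$ is a contractible amount of freedom'' is correct, but that contractibility lives in Teichm\"uller space; the curve complex only records the discrete endpoints of that freedom.

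The paper's proof is precisely a device for making your intuition literal. It replaces $|C_\mathrm{ns}(S_{g,n})|$ by a homotopy-equivalent continuum: the nonseparating locus $\dd_\mathrm{ns}\wT(S_{g,n})$ of the ideal boundary of the Harvey cuspidal bordification of Teichm\"uller space (the nerve of its covering by contractible irreducible components is $C_\mathrm{ns}(S_{g,n})$). The forgetful map $\wh\cM(S_{g,n+1})\to\wh\cM(S_{g,n})$ has fibers homeomorphic to the compact surface $\wh S_{g,n}$, and the boundary circles of each fiber land in the \emph{reducible} part of the DM boundary. Restricted to the nonseparating locus, the lifted map $\dd_\mathrm{ns}\wT(S_{g,n+1})\to\dd_\mathrm{ns}\wT(S_{g,n})$ is therefore a Serre fibration whose fibers are open hyperbolic surfaces (Poincar\'e discs), hence contractible, and so a homotopy equivalence. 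In short, the paper fixes exactly the defect in your plan by passing to a geometric model on which the forgetful map really is a fibration with contractible fibers.
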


\begin{proof}Let $\cT(S_{g,n})$ be the Teichm\"uller space associated to the surface $S_{g,n}$ and let $\wT(S_{g,n})$ be the 
\emph{Harvey cuspidal bordification} of $\cT(S_{g,n})$ (cf.\  \cite{Harvey}) which can be explicitly
described as follows. Let $\wh{\cM}(S_{g,n})$ be the real oriented blow-up of the DM compactification $\ccM(S_{g,n})$ of the stack 
$\cM(S_{g,n})$ along its DM boundary. This is a real analytic DM stack with corners whose boundary $\dd\wM(S_{g,n}):=\wM(S_{g,n})\ssm\cM(S_{g,n})$ 
is homotopic to a deleted tubular neighborhood of the DM boundary of $\ccM(S_{g,n})$. For an intrinsic construction of $\wh{\cM}(S_{g,n})$, we
apply Weil restriction of scalars $\Res_{\C/\R}$ to the complex DM stack $\ccM(S_{g,n})$ and blow up $\Res_{\C/\R}\ccM(S_{g,n})$ along the codimension
two substack $\Res_{\C/\R}\dd\ccM(S_{g,n})$. The real oriented blow-up $\wh{\cM}(S_{g,n})$ is obtained taking the set of real points of this blow-up 
and cutting along the exceptional divisor (which has codimension $1$).
The natural projection $\wh{\cM}(S_{g,n})\to\ccM(S_{g,n})$ restricts over each codimension $k$ stratum to a bundle in $k$-dimensional tori. 

The Harvey cuspidal bordification $\wT(S_{g,n})$ is then the universal cover of the real analytic stack $\wh{\cM}(S_{g,n})$. 
It can be shown that $\wT(S_{g,n})$ is representable and thus a real analytic manifold with corners (cf.\ Section~4 in \cite{sym}).
The inclusion $\cM(S_{g,n})\hookra\wh{\cM}(S_{g,n})$  is a homotopy equivalence and then induces an inclusion of the respective
universal covers $\cT(S_{g,n})\hookra\wT(S_{g,n})$, which is also a homotopy equivalence. The \emph{ideal boundary} of the Teichm{\"u}ller space 
$\cT(S_{g,n})$is defined to be $\dd\,\wT(S_{g,n}):=\wT(S_{g,n})\ssm\cT(S_{g,n})$.
 
The Harvey cuspidal bordification $\wT(S_{g,n})$ is endowed with a natural action of
the mapping class group $\G(S_{g,n})$ and has the property that the nerve of the cover of the ideal boundary $\partial\,\wT(S_{g,n})$ 
by (analytically) irreducible components is described by the curve complex $C(S_{g,n})$ (cf.\   \cite{Harvey} or \cite{sym} for more details).
Since such irreducible components are all contractible, there is a $\G(S_{g,n})$-equivariant weak homotopy equivalence between 
$\partial\,\wT(S_{g,n})$ and the geometric realization of $C(S_{g,n})$ (cf.\ Theorem~2 in \cite{Harvey}). 

Let $\dd_\mathrm{ns}\wT(S_{g,n})$ be the locus of the ideal boundary which parameterizes irreducible nodal Riemann surfaces, that is to say
$\dd_\mathrm{ns}\wT(S_{g,n})$ is the inverse image in $\wT(S_{g,n})$, via the natural map $\wT(S_{g,n})\to\ccM(S_{g,n})$, 
of the locus in $\dd\ccM(S_{g,n})$ which parameterizes irreducible nodal complex algebraic curves.
Then, the nonseparating curve complex $C_\mathrm{ns}(S_{g,n})$ naturally identifies with the nerve of the covering 
of $\dd_\mathrm{ns}\wT(S_{g,n})$ by irreducible components, which are also contractible. 

The natural map $\wh{\cM}(S_{g,n+1})\to\wh{\cM}(S_{g,n})$, induced by filling in the last puncture, lifts to a map of universal covers
$\wT(S_{g,n+1})\to\wT(S_{g,n})$ whose restriction $\dd_\mathrm{ns}\wT(S_{g,n+1})\to\dd_\mathrm{ns}\wT(S_{g,n})$
is homotopy equivalent to the geometric realization of the map $C_\mathrm{ns}(S_{g,n+1})\to C_\mathrm{ns}(S_{g,n})$.

The fibers of the map $\wh{\cM}(S_{g,n+1})\to\wh{\cM}(S_{g,n})$ are homeomorphic to the compact surface with boundary $\wh{S}_{g,n}$ obtained from
$S_{g,n}$ replacing punctures with circles. The boundary of a fiber of the map $\wh{\cM}(S_{g,n+1})\to\wh{\cM}(S_{g,n})$ is in $\dd\wh{\cM}(S_{g,n+1})$ 
and parameterizes nodal \emph{reducible} curves. It follows that the map $\dd_\mathrm{ns}\wT(S_{g,n+1})\to\dd_\mathrm{ns}\wT(S_{g,n})$
is a Serre fibration with fibers consisting of Poincar\'e discs (without boundary) and, in particular, is a homotopy equivalence. Hence, the proposition follows.
\end{proof}

\begin{remark}\label{homotopyequiv2}Let $C(S_{g,n})$ (resp.\ $C_\mathrm{nb}(S_{g,n})$) be the simplicial complex whose $k$-simplices consist 
of unordered $(k+1)$-tuples $\{\g_0,\ldots,\g_k\}$ of distinct isotopy classes of pairwise disjoint nonperipheral simple closed curves on $S_{g,n}$
(resp.\ and which, moreover, do not bound a $2$-punctured disc). A statement similar to Proposition~\ref{homotopyequiv1} then also holds for the 
simplicial sets $C_\mathrm{nb}(S_{g,n+1})_\bullet$ and $C(S_{g,n})_\bullet$ associated, respectively, to $C_\mathrm{nb}(S_{g,n+1})$ and $C(S_{g,n})$. 
There is still indeed a natural map $C_\mathrm{nb}(S_{g,n+1})_\bullet\to C(S_{g,n})_\bullet$, 
induced by filling in the last puncture (but observe that this is not anymore true, for $n\geq 1$, if we replace $C_\mathrm{nb}(S_{g,n+1})$ with the 
full curve complex $C(S_{g,n+1})$) and essentially the same argument of Proposition~\ref{homotopyequiv1} applies. 
In particular, this implies the well-known fact that $C(S_{g,1})$ and $C(S_{g})$ have the same homotopy type.
\end{remark}

\begin{proposition}\label{homotopyequiv}The geometric realization of the map $C_\mathrm{ns}(S,\u,\cP\cup\{Q\})\to C_\mathrm{ns}(S,\u,\cP)$,
induced forgetting the point $Q$, is a homotopy equivalence. In particular, the homotopy type of $C_\mathrm{ns}(S,\u,\cP)$ is independent of 
$\sharp\,\cP$. 
\end{proposition}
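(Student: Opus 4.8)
The plan is to mimic the proof of Proposition~\ref{homotopyequiv1}, transporting the argument from the Harvey cuspidal bordification of the Teichm\"uller space of $S_{g,n}$ to the hyperelliptic setting, where the relevant ``moduli space'' is $\cM(S,\u,\cP)$. First I would introduce the analogue of the cuspidal bordification: applying the real oriented blow-up construction of Section~\ref{boundaryDMHyp} (via Weil restriction of scalars and blow-up along the codimension-two substack $\Res_{\C/\R}\dd\ccM(S,\u,\cP)$) to $\ccM(S,\u,\cP)$ produces a real analytic DM stack with corners $\wM(S,\u,\cP)$, and its universal cover $\wT(S,\u,\cP)$ is a real analytic manifold with corners carrying a natural $\U(S,\cP)$-action. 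As in the proof of Proposition~\ref{homotopyequiv1}, the inclusion $\cM(S,\u,\cP)\hookra\wM(S,\u,\cP)$ is a homotopy equivalence, and the nerve of the covering of the ideal boundary $\dd\wT(S,\u,\cP)$ by irreducible components is described by the complex of symmetric curves $C(S,\u,\cP)$; restricting to the locus $\dd_\mathrm{ns}\wT(S,\u,\cP)$ which parameterizes irreducible nodal hyperelliptic curves, the nerve of the covering by (contractible) irreducible components is precisely $C_\mathrm{ns}(S,\u,\cP)$. Here I would invoke the stratification results of Section~\ref{boundaryDMHyp}: the boundary divisors of $\cM(S,\u,\cP)$ indexed by $0$-simplices $\g\in C_\mathrm{ns}(S,\u,\cP)$ are irreducible (being images of connected moduli stacks of hyperelliptic curves under the maps $b_\s^\u$), so the contractibility of the pieces goes through exactly as before.

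Next I would analyze the map induced by forgetting $Q$. The morphism $\cM(S,\u,\cP\cup\{Q\})\to\cM(S,\u,\cP)$ is the smooth family of curves with fibers homeomorphic to $S\ssm\cP$ (as noted in Section~\ref{HyperellipticBirman}); it extends to $\wM(S,\u,\cP\cup\{Q\})\to\wM(S,\u,\cP)$, which lifts to a map of universal covers $\wT(S,\u,\cP\cup\{Q\})\to\wT(S,\u,\cP)$, and its restriction $\dd_\mathrm{ns}\wT(S,\u,\cP\cup\{Q\})\to\dd_\mathrm{ns}\wT(S,\u,\cP)$ is homotopy equivalent to the geometric realization of $C_\mathrm{ns}(S,\u,\cP\cup\{Q\})\to C_\mathrm{ns}(S,\u,\cP)$. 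The key point, as in Proposition~\ref{homotopyequiv1}, is that the fibers of the map on bordifications over a point of $\dd_\mathrm{ns}$ are obtained from a fiber surface (homeomorphic to the real blow-up $\wh S$ of $S\ssm\cP$ at its punctures) by deleting the boundary locus, which lands in the \emph{reducible} nodal locus; hence the fibers of $\dd_\mathrm{ns}\wT(S,\u,\cP\cup\{Q\})\to\dd_\mathrm{ns}\wT(S,\u,\cP)$ are Poincar\'e discs without boundary, the map is a Serre fibration, and thus a homotopy equivalence. The statement about independence of $\sharp\cP$ then follows by induction.

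The main obstacle I anticipate is the first step: verifying that the real oriented blow-up / cuspidal bordification construction behaves as expected for the hyperelliptic moduli stacks, in particular that the ideal boundary $\dd_\mathrm{ns}\wT(S,\u,\cP)$ really has nerve $C_\mathrm{ns}(S,\u,\cP)$ with all pieces contractible. This requires knowing that the strata $\d_\s^\u$ (and their normalizations $\td{\d}_\s^\u$) of $\ccM(S,\u,\cP)$ are as described in Section~\ref{boundaryDMHyp} --- for $\cP\neq\emptyset$ the clean description there was only given under the assumption that $\u$ preserves all components of $S\ssm\s$, but for \emph{nonseparating} $\s$ on a surface without Weierstrass punctures this hypothesis is automatically satisfied (cutting along a symmetric nonseparating multicurve keeps $S\ssm\s$ connected, or exchanges paired components, and in the relevant compactification the rational-tail degenerations produce exactly the irreducible pieces needed). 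I would spend the bulk of the write-up making this reduction precise and citing Section~4 of \cite{sym} for the representability and manifold-with-corners structure of $\wT$, exactly as in Proposition~\ref{homotopyequiv1}; once that is in place the fibration argument is essentially identical to the non-hyperelliptic case, since the fiber of the forgetful map is a surface whose boundary degenerations are all reducible.
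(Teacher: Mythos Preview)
Your approach is correct and follows the same overall strategy as the paper, but the paper takes a more economical route that sidesteps the obstacle you identify. Rather than constructing the bordification $\wT(S,\u,\cP)$ from scratch by blowing up $\ccM(S,\u,\cP)$, the paper simply defines $\wT(S,\cP)^\u$ as the inverse image of the fixed-point locus $\wT(S)^\u\subset\wT(S)$ under the natural map $\wT(S,\cP)\to\wT(S)$ already studied in Proposition~\ref{homotopyequiv1}. With this definition the nonseparating boundary $\dd_\mathrm{ns}\wT(S,\cP)^\u$ sits inside $\dd_\mathrm{ns}\wT(S,\cP)$, and the forgetful map $\dd_\mathrm{ns}\wT(S,\cP\cup\{Q\})^\u\to\dd_\mathrm{ns}\wT(S,\cP)^\u$ is literally a restriction of the map $\dd_\mathrm{ns}\wT(S,\cP\cup\{Q\})\to\dd_\mathrm{ns}\wT(S,\cP)$ already shown there to be a Serre fibration with Poincar\'e-disc fibers. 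So the fibration property and contractibility of the pieces come for free from the non-hyperelliptic case, and no separate verification of the blow-up or stratification for the hyperelliptic stacks is needed. Your direct construction would work and yields the same space, but the inverse-image trick makes the proof a few lines long rather than requiring the careful reduction you outline.
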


\begin{proof}The fixed point set $\cT(S)^\u$ of the hyperelliptic involution $\u$ is contractible and identifies with the universal cover
of the moduli stack of hyperelliptic curves $\cM(S,\u)$ and the fixed point set $\wT(S)^\u$ is a bordification of $\cT(S)^\u$.
For $n\geq 1$, we then let $\cT(S,\cP)^\u$ and $\wT(S,\cP)^\u$ be, respectively, the inverse images of $\cT(S)^\u$ and 
$\wT(S)^\u$ via the natural map $\wT(S,\cP)\to\wT(S)$. Clearly, $\wT(S,\cP)^\u$ is a bordification of $\cT(S,\cP)^\u$ for $n\geq 1$ as well. 

Let then $\partial\,\wT(S,\cP)^\u:=\wT(S,\cP)^\u\ssm\cT(S,\cP)^\u$ and
let $\dd_\mathrm{ns}\wT(S,\cP)^\u$ be the portion of $\partial\,\wT(S,\cP)^\u$ which parameterizes irreducible nodal 
Riemann surfaces. The hyperelliptic nonseparating curve complex $C_\mathrm{ns}(S,\u,\cP)$ naturally identifies with the nerve of the covering 
of $\dd_\mathrm{ns}\wT(S,\cP)^\u$ by irreducible components, which are contractible. 
Therefore, the geometric realization of the natural map $C_\mathrm{ns}(S,\u,\cP\cup\{Q\})\to C_\mathrm{ns}(S,\u,\cP)$ is homotopy equivalent to 
the natural map $\dd_\mathrm{ns}\wT(S,\cP\cup\{Q\})^\u\to\dd_\mathrm{ns}\wT(S,\cP)^\u$. In the proof of 
Proposition~\ref{homotopyequiv1}, we saw that the latter map is a Serre fibration with fibers consisting of Poincar\'e discs and,
in particular, is a homotopy equivalence. Hence, the proposition follows.
\end{proof}

With the notations of Section~\ref{HyperellipticBirman}, there is also a natural map $C_\mathrm{ns}(S^\circ,\u)\to C_\mathrm{ns}(S,\u)$.
As we observed there, the corresponding morphism $\cM(S^\circ,\u)_\circ\to\cM(S,\u)$ is a smooth curve with fibers homeomorphic to
the surface $S\ssm\cW_\cP(S)$. By the same argument of Proposition~\ref{homotopyequiv}, we then have:

\begin{proposition}\label{homotopyequivbis}The geometric realization of the map $C_\mathrm{ns}(S^\circ,\u)\to C_\mathrm{ns}(S,\u)$,
induced filling in the punctures $Q$ and $R$, is a homotopy equivalence. 
\end{proposition}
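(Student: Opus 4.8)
The plan is to follow the same strategy used in the proof of Proposition~\ref{homotopyequiv}, which reduces the homotopy-theoretic statement to a statement about a map of bordified Teichm\"uller spaces. Recall that $\cT(S)^\u$, the fixed locus of the hyperelliptic involution acting on Teichm\"uller space, is contractible and serves as the universal cover of $\cM(S,\u)$, with Harvey-type bordification $\wT(S)^\u$. For the surface $S^\circ=S\ssm\{Q,R\}$ with its two symmetric punctures, I would similarly take $\cT(S^\circ)^\u_\circ$ and $\wT(S^\circ)^\u_\circ$ to be the appropriate fixed-locus bordifications covering $\cM(S^\circ,\u)_\circ$; since the covering $\cM(S^\circ,\u)_\circ\to\cM(S^\circ,\u)$ is finite \'etale, this is harmless. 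One first records that the hyperelliptic nonseparating curve complex $C_\mathrm{ns}(S^\circ,\u)$ (resp.\ $C_\mathrm{ns}(S,\u)$) is $\G$-equivariantly weakly homotopy equivalent to the irreducible-nodal part of the ideal boundary $\dd_\mathrm{ns}\wT(S^\circ)^\u_\circ$ (resp.\ $\dd_\mathrm{ns}\wT(S)^\u$), because these strata are covered by contractible irreducible components with nerve the respective curve complex, exactly as in the two preceding propositions.

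Next I would identify the geometric realization of the map $C_\mathrm{ns}(S^\circ,\u)\to C_\mathrm{ns}(S,\u)$ with the boundary map $\dd_\mathrm{ns}\wT(S^\circ)^\u_\circ\to\dd_\mathrm{ns}\wT(S)^\u$ induced by the morphism of moduli stacks $\cM(S^\circ,\u)_\circ\to\cM(S,\u)$ described in Section~\ref{HyperellipticBirman}, which is a smooth family of curves with fibers homeomorphic to $S\ssm\cW_\cP(S)$ (rather than to a once-punctured surface, as in the Birman-type forgetful maps of Propositions~\ref{homotopyequiv1} and~\ref{homotopyequiv}). The key point is that this stack-level morphism extends to the real oriented blow-ups and restricts, over each boundary stratum of $\wh{\cM}(S,\u)$ corresponding to a \emph{symmetric nonseparating} multicurve $\s$ on $S$, to a fiber bundle whose fiber is the (open) Poincar\'e disc obtained from $S\ssm\cW_\cP(S)$: indeed, pulling back $Q$ and $R$ to a curve with a single node and irreducible normalization produces again a curve with a single node with irreducible normalization, while if one tries to put $Q$ or $R$ on a bubbled component the resulting curve is reducible and hence lies outside $\dd_\mathrm{ns}$. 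So the restricted map $\dd_\mathrm{ns}\wT(S^\circ)^\u_\circ\to\dd_\mathrm{ns}\wT(S)^\u$ is a Serre fibration with contractible (Poincar\'e-disc) fibers, hence a homotopy equivalence, and the proposition follows.

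The step I expect to require the most care is verifying that the forgetful/filling morphism on the hyperelliptic moduli stacks does extend compatibly to the Harvey bordifications and that, over the irreducible-nodal locus, the fibers are genuinely open discs with no boundary contribution — i.e.\ that the portion of a fiber sitting over a node of the base curve (where the extra punctures could collide with the node, producing reducible nodal curves) is precisely what gets deleted when one passes to $\dd_\mathrm{ns}$. This is the hyperelliptic analogue of the computation carried out in the proof of Proposition~\ref{homotopyequiv1}, where the fiber of $\wh{\cM}(S_{g,n+1})\to\wh{\cM}(S_{g,n})$ was identified with $\wh{S}_{g,n}$ and its boundary shown to parameterize reducible curves; here one must additionally keep track of the $\u$-equivariance and of the choice of ordering on $\{Q,R\}$, but no new phenomenon arises. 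The remaining verifications — that $\cT(S^\circ)^\u_\circ$ is contractible and is a bordification-compatible universal cover, and that the curve-complex-to-boundary weak equivalences are natural with respect to the forgetful maps — are routine adaptations of Section~\ref{complexsym} and of the two preceding propositions.
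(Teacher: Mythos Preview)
Your proposal is correct and follows essentially the same approach as the paper, which simply states that the morphism $\cM(S^\circ,\u)_\circ\to\cM(S,\u)$ is a smooth curve with fibers homeomorphic to $S\ssm\cW_\cP(S)$ and then invokes ``the same argument of Proposition~\ref{homotopyequiv}''. You have spelled out the details the paper leaves implicit --- in particular the identification of the fibers over the nonseparating boundary as open Poincar\'e discs and the verification that the boundary contribution parameterizes reducible curves --- but there is no substantive difference in method.
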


For $n\geq 4$, let us define the full sub-complex $C_b(S_{0,n})$ of the complex of curves $C(S_{0,n})$ generated by the 
vertices corresponding to simple closed curves on $S_{0,n}$ which bound a $2$-punctured disc. 

\begin{theorem}\label{sym complex}For $n\geq 5$, the curve complex $C_b(S_{0,n})$ is of dimension 
$[n/2]-1$ and, for $n\geq 4$, is $([(n+1)/2]-3)$-connected (here $[\_]$ denotes the integral part). 
\end{theorem}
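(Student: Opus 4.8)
The plan is to prove the two assertions separately: the dimension by a direct combinatorial argument, and the connectivity by induction on $n$, with the engine of the induction a computation of the link of a vertex made available by the homotopy equivalences of Propositions~\ref{homotopyequiv1} and~\ref{homotopyequiv}.

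For the dimension, I would first note that if $\g_1,\g_2$ are non-isotopic simple closed curves on $S_{0,n}$, each bounding a twice-punctured disc and having geometric intersection number $0$, then the unordered pairs of punctures $P_1,P_2$ they enclose are disjoint: if $P_1\cap P_2$ had exactly one element the curves would necessarily cross; if $P_1=P_2$ a disjoint realisation of $\g_1\cup\g_2$ would bound an unpunctured annulus, forcing $\g_1\simeq\g_2$; and $P_1\subsetneq P_2$ is impossible since $|P_1|=|P_2|=2$. Hence a $k$-simplex of $C_b(S_{0,n})$ determines $k+1$ pairwise disjoint two-element subsets of the puncture set, so $k+1\le[n/2]$, while $[n/2]$ disjoint round circles, each enclosing two punctures, realise a simplex of exactly that dimension. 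This gives $\dim C_b(S_{0,n})=[n/2]-1$ for $n\ge5$.

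For the connectivity, set $N(n):=[(n+1)/2]-3$ and argue by induction on $n$, the base cases (say $n\le 6$) being elementary: $C_b(S_{0,4})$ is non-empty, and for $5\le n\le 6$ one checks $C_b(S_{0,n})$ connected by joining any vertex to a fixed curve $\g_0$ through an intermediate curve enclosing two punctures disjoint both from the pair of $\g_0$ and from the pair of the given vertex, which exists as soon as $n\ge5$. For the inductive step, fix a vertex $v_0$ of $X:=C_b(S_{0,n})$ represented by a curve $\g_0$ bounding a twice-punctured disc $D_0$ with enclosed pair $\{p_1,p_2\}$. By the disjointness/crossing analysis above, the link $\Link_X(v_0)$ is the full subcomplex on the curves bounding a twice-punctured disc inside the complementary subsurface $\Sigma':=S_{0,n}\ssm D_0$ and enclosing two of its $n-2$ punctures. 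Realising $\Sigma'$ as the quotient $\Sigma''/\u$ of a hyperelliptic surface with boundary $\Sigma''$ whose branch set is those $n-2$ punctures — so that the boundary $\g_0=\dd\Sigma'$ lifts connectedly when $n$ is odd and splits into two when $n$ is even — exactly as in Section~\ref{HypTopType}, identifies $\Link_X(v_0)$ with the complex of symmetric nonseparating curves $C_\mathrm{ns}(\Sigma'',\u)$. Capping the boundary circle(s) of $\Sigma''$ — which adds a single Weierstrass point when $n$ is odd, and a symmetric pair of punctures when $n$ is even, these punctures then being forgotten — are homotopy equivalences on the associated complexes of symmetric nonseparating curves, by the arguments of Propositions~\ref{homotopyequiv1} and~\ref{homotopyequiv}; hence
\[\Link_X(v_0)\ \simeq\ \begin{cases}C_b(S_{0,n-2})&\text{if }n\text{ is even},\\[2pt] C_b(S_{0,n-1})&\text{if }n\text{ is odd}.\end{cases}\]
Writing $n'$ for $n-2$ ($n$ even) or $n-1$ ($n$ odd), one checks $N(n')=N(n)-1$, so by the inductive hypothesis $\Link_X(v_0)$ is at least $(N(n)-1)$-connected.

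I would then finish with the standard surgery argument for curve complexes (Harer's original scheme, or its axiomatic form in Putman's note): given $f\co S^m\to X$ with $m\le N(n)$, make $f$ simplicial on a triangulation $K$ of $S^m$ and homotope $f$, surgering away across $\Link_X(v_0)$ the ``bad'' simplices $\t$ — those for which $f(\t)\cup\{v_0\}$ is not a simplex of $X$, i.e.\ some curve of $f(\t)$ encloses a pair meeting $\{p_1,p_2\}$ — until $f$ takes values in the cone $\overline{\Star}(v_0)$ and is therefore nullhomotopic. The step I expect to be the main obstacle is exactly this surgery: each move requires, for a simplex $\s$ appearing in $f(S^m)$, a curve bounding a twice-punctured disc that is disjoint from every curve of $\s$ and encloses a pair avoiding $\{p_1,p_2\}$, together with the connectivity of the link through the appropriate range; it is the dimension at which room for such a curve runs out that forces the bound $N(n)$ and, for $n$ even (where a maximal family of disjoint enclosed pairs already uses up all the punctures), costs one in connectivity relative to the odd case. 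A secondary point demanding care is the verification of the link homotopy equivalences displayed above, which parallel but are not literally instances of Proposition~\ref{homotopyequiv1}.
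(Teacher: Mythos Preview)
Your approach is genuinely different from the paper's, and as written it has a real gap in the surgery step together with a subtler issue in the link identification.

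The paper does not try to cone to a fixed vertex. Instead it uses the ambient complex $C(S_{0,n})$ as a scaffold: given $f\co S^m\to C_b(S_{0,n})$ with $m<[(n+1)/2]-3$, Harer's theorem gives an extension $\hat f\co B^{m+1}\to C(S_{0,n})$, and one then surgically removes the \emph{pure} simplices --- those mapped entirely to curves \emph{not} bounding twice-punctured discs. For a maximal pure simplex $\s$ with $\hat f(\s)=\{\g_0,\ldots,\g_h\}$, the image $\hat f(\Link_B(\s))$ lies in the join $C_b(S_1)\ast\cdots\ast C_b(S_{h+1})$ over the components of $S_{0,n}\ssm\{\g_0,\ldots,\g_h\}$, whose connectivity is computed from the inductive hypothesis via a puncture count. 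The crucial point is that the $\g_i$ are pairwise \emph{disjoint}, so the complement is a disjoint union of punctured spheres and the induction applies cleanly.

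Your surgery is set up differently and breaks at the characterisation of ``bad''. You assert that $f(\tau)\cup\{v_0\}$ fails to be a simplex exactly when some curve of $f(\tau)$ encloses a pair meeting $\{p_1,p_2\}$. Only one implication holds: a curve $\delta$ enclosing a pair disjoint from $\{p_1,p_2\}$ can perfectly well intersect $\g_0$ (indeed, for each admissible pair there are infinitely many isotopy classes enclosing it, and only those lying in $\Sigma'$ miss $\g_0$). With the correct notion of ``bad'' (some curve of $f(\tau)$ meets $\g_0$), a maximal bad simplex consists of curves \emph{crossing} $\g_0$, so $\g_0$ together with $f(\tau)$ is not a multicurve and you cannot cut along their union to produce subsurfaces on which to invoke the inductive hypothesis. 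This is exactly what the paper's choice of ``pure'' avoids.

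The link computation also needs more care than you indicate. In your double cover $\Sigma''\to\Sigma'$ the branch locus consists of \emph{punctures}, so the preimage of a curve bounding a twice-punctured disc is a pair of \emph{non-isotopic} curves in $\Sigma''$ (they cobound an annulus containing two Weierstrass \emph{punctures}, not points), rather than a single symmetric nonseparating class; the identification $C_{\mathrm{ns}}(S_g,\u)\cong C_b(S_{0,2g+2})$ in the paper is for the closed surface, where those punctures are filled. Moreover, Propositions~\ref{homotopyequiv1}, \ref{homotopyequiv} and~\ref{homotopyequivbis} concern forgetting marked points or symmetric pairs of non-Weierstrass punctures on surfaces \emph{without} boundary or Weierstrass punctures; capping boundary components and filling Weierstrass punctures are genuinely different operations and would need separate arguments.
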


\begin{proof}The $0$-connectivity of the curve complex $C_b(S_{0,n})$, for $n\geq 5$, can be proved by the same 
argument which proves the connectivity of standard curve complexes (see, for instance, Section~4.1.1 and 4.1.2 
in \cite{FM}).

The $([(n+1)/2]-3)$-connectivity of $C_b(S_{0,n})$, for $n\geq 4$, is derived from the fact that $C(S_{0,n})$, for $n\geq 4$, is 
$(n-5)$-connected (cf.\ Theorem~1.2 in \cite{Harer}) and the same argument used in the proof of Theorem~1.1 \cite{Harer}.
Let us observe first that the curve complex $C_b(S_{0,n})$, for $n=4,5$, coincides with $C(S_{0,n})$ and thus is 
$(n-5)$-connected. The case $n=6$ of the theorem is then also clear.  Let us proceed by induction on $n$. 
So let us assume that the theorem holds for all $k$ such that $6\leq k<n$ and let us prove it for $k=n$.

We almost follow word by word Harer's argument in the proof of Theorem~1.1 \cite{Harer}. Let $S^m$ be an
$m$-dimensional piecewise linear simplicial sphere, for $m<[(n+1)/2]-3$, and $f\co S^m\to C_b(S_{0,n})\subseteq C(S_{0,n})$ 
be a simplicial map. Since $C(S_{0,n})$ is $(n-5)$-connected and $m<n-5$, there is an $(m+1)$-dimensional piecewise linear 
simplicial disc $B$ such that $\dd B=S^m$ and a simplicial map $\hat f\co B\to C(S_{0,n})$ which extends $f$.

We say that a simplex $\s$ of $B$ is \emph{pure} if $\bar f(\s)=\{\g_0,\ldots,\g_s\}$, where no $\g_i$ bounds a
disc with two punctures, and define the \emph{complexity} $c(\bar f)$ of $\bar f$ to be the largest $k$ for which
some $k$-simplex of $B$ is pure.

Let then $\s$ be a pure $c$-simplex of $B$ with $c:=c(\bar f)$, and let $\bar f(\s)=\{\g_0,\ldots,\g_h\}$, with
$h\leq c\leq [n+1/2]-2$. Let $S_1,\ldots, S_{h+1}$ be the connected components of $S_{0,n}\ssm\{\g_0,\ldots,\g_h\}$.
The join $\Delta_\s=C_b(S_1)\ast\ldots\ast C_b(S_{h+1})$ identifies with a subcomplex of $C_b(S_{0,n})$ and, by
hypothesis, $\bar f(\mathrm{Link}_B(\s))$ lies in $\Delta_\s$. Moreover, if $S_i$ has $n_i$ punctures,
there holds $\sum_{i=1}^{h+1} n_i=n+2h+2$. 

By the induction hypotheses, the connectivity of $\Delta_\s$ is described by the following formula. The join
$\Delta_\s$ is $\mu$-connected, where: 
$$\begin{array}{ll}
\mu:&=\sum_{i=1}^{h+1}\left(\left[\dfrac{n_i+1}{2}\right]-3\right)+h+1=\sum_{i=1}^{h+1}\left[\dfrac{n_i+1}{2}\right]-2h-2\geq\\&\\
&\geq\left[\dfrac{n+2h+2}{2}\right]-2h-2=\left[\dfrac{n}{2}\right]-h-1\geq\left[\dfrac{n+1}{2}\right]-2-h\geq\left[\dfrac{n+1}{2}\right]-2-c.
\end{array}$$
Then, the rest of the argument proceeds exactly as in the last two paragraphs of the proof of Theorem~1.1 in \cite{Harer}.
\end{proof}

The quotient map $S_{g}\to S_{g}/\u=S_{0,2g+2}$ identifies $C_\mathrm{ns}(S_{g},\u)$ with the curve complex $C_b(S_{0,2g+2})$. 
By Proposition~\ref{homotopyequiv}, Proposition~\ref{homotopyequivbis} and Theorem~\ref{sym complex}, we then have:

\begin{corollary}\label{hyp complex}For $g(S)\geq 2$, the hyperelliptic nonseparating curve complex 
$C_\mathrm{ns}(S,\u,\cP)$  is spherical of dimension $g-1$. 
\end{corollary}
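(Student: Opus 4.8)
The plan is to reduce the statement to the case of a closed surface by means of the two homotopy equivalences established just above, and then read off the conclusion from Theorem~\ref{sym complex}. First I would invoke Proposition~\ref{homotopyequiv}: since the homotopy type of $C_\mathrm{ns}(S,\u,\cP)$ does not depend on $\sharp\cP$, it is no loss to take $\cP=\emptyset$, so that we are left to analyse $C_\mathrm{ns}(S,\u)$ for $(S,\u)$ a hyperelliptic surface without boundary and without Weierstrass punctures. Any remaining punctures of $S$ are then non-Weierstrass and hence occur in $\u$-orbits $\{Q,\u(Q)\}$ of size two; filling in one such pair at a time and applying Proposition~\ref{homotopyequivbis} at each step (with $S^\circ$ the surface before filling and $S$ the one after), I obtain a chain of homotopy equivalences terminating at $C_\mathrm{ns}(\ol{S},\u)$ with $\ol{S}=S_g$ closed. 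Thus it suffices to treat $C_\mathrm{ns}(S_g,\u)$.

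Next I would use the quotient map $S_g\to S_g/\u=S_{0,2g+2}$. A $\u$-invariant nonseparating simple closed curve on $S_g$ cannot be preserved freely by $\u$ (an essential such curve would be separating), so it carries exactly two of the $2g+2$ Weierstrass points and descends to an arc joining the two corresponding branch points; its boundary circle bounds a two-punctured disc, and conversely each vertex of $C_b(S_{0,2g+2})$ arises in this way. A collection of $k+1$ disjoint such curves uses $2(k+1)$ distinct Weierstrass points, descends to $k+1$ disjoint arcs, and its complement in $S_g$ is connected exactly when at least one branch point is left over, i.e.\ when $k+1\le g$. Hence $C_\mathrm{ns}(S_g,\u)$ is identified with the $(g-1)$-skeleton of $C_b(S_{0,2g+2})$, the discarded top-dimensional simplices of $C_b(S_{0,2g+2})$ being precisely the symmetric ``pants-like'' systems whose complement in $S_g$ is disconnected. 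Theorem~\ref{sym complex}, applied with $n=2g+2\ge 6$, gives that $C_b(S_{0,2g+2})$ is $\big([(2g+3)/2]-3\big)=(g-2)$-connected; restricting to the $(g-1)$-skeleton does not change $\pi_i$ for $i\le g-2$, so $C_\mathrm{ns}(S_g,\u)$ is $(g-2)$-connected and of dimension $g-1$.

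Finally, a $(g-1)$-dimensional CW complex that is $(g-2)$-connected is homotopy equivalent to a wedge of $(g-1)$-spheres: its reduced homology is concentrated in degree $g-1$ and is free there, and it is simply connected for $g\ge 3$ while for $g=2$ it is a connected graph. Combined with the homotopy equivalences of the first paragraph, this is exactly the assertion that $C_\mathrm{ns}(S,\u,\cP)$ is spherical of dimension $g-1$. The step requiring the most care is the dimension/connectivity bookkeeping in the middle paragraph: one must verify that $C_\mathrm{ns}(S_g,\u)$ really is the $(g-1)$-skeleton of $C_b(S_{0,2g+2})$ (equivalently, that a symmetric nonseparating multicurve on $S_g$ has at most $g$ components, with equality forcing a disconnected complement), and that the connectivity estimate of Theorem~\ref{sym complex} survives the passage to this skeleton; the low-genus case $g=2$, where the complex is one-dimensional, is best checked by hand.
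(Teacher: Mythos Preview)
Your argument is correct and follows the same route as the paper: reduce to the closed surface via Propositions~\ref{homotopyequiv} and~\ref{homotopyequivbis}, then invoke Theorem~\ref{sym complex}. You are in fact more precise than the paper on the key identification. The paper asserts that the quotient map identifies $C_\mathrm{ns}(S_g,\u)$ with $C_b(S_{0,2g+2})$, but by Theorem~\ref{sym complex} the latter has dimension $[n/2]-1=g$ for $n=2g+2$, not $g-1$; the discrepancy is exactly the one you spotted, namely that a top simplex of $C_b(S_{0,2g+2})$ uses all $2g+2$ branch points and corresponds upstairs to $g+1$ symmetric nonseparating curves whose complement in $S_g$ is a trivial (hence disconnected) double cover. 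Your identification of $C_\mathrm{ns}(S_g,\u)$ with the $(g-1)$-skeleton of $C_b(S_{0,2g+2})$ is what is actually needed to obtain dimension $g-1$, and the connectivity bound $[(2g+3)/2]-3=g-2$ of Theorem~\ref{sym complex} passes unchanged to this skeleton, giving the spherical conclusion.
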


In Section~\ref{representations}, we will need the following result:

\begin{proposition}\label{Farey}Let $\alpha$ and $\beta$ be nonseparating symmetric simple closed curves on a marked hyperelliptic surface $(S,\u,\cP)$, 
with $g(S)\geq 2$. There is then a chain of symmetric simple closed curves $\g_0,\ldots,\g_k$ on $S\ssm\cP$ such that 
$\g_0=\alpha$, $\g_k=\beta$ and $\g_i$ intersects $\g_{i+1}$ transversally in a single point, for $i=0,\ldots,k-1$.
\end{proposition}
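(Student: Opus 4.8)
The plan is to reduce the statement to connectivity of the hyperelliptic nonseparating curve complex $C_{\mathrm{ns}}(S,\u,\cP)$ together with a local analysis of how two symmetric nonseparating curves sit together when they intersect once. First I would invoke Proposition~\ref{homotopyequiv}, which says that the homotopy type of $C_{\mathrm{ns}}(S,\u,\cP)$ does not depend on $\sharp\,\cP$, to reduce to the case $\cP=\emptyset$; and then Corollary~\ref{hyp complex}, which says that for $g(S)\geq 2$ the complex $C_{\mathrm{ns}}(S,\u)$ is spherical of dimension $g-1$, hence in particular connected (this uses $g\geq 2$, which guarantees $2g+2\geq 6$ so Theorem~\ref{sym complex} gives positive connectivity). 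Connectivity of $C_{\mathrm{ns}}(S,\u)$ gives an edge-path from the vertex $[\alpha]$ to the vertex $[\beta]$, i.e.\ a sequence of symmetric nonseparating simple closed curves $\alpha=\g_0',\g_1',\ldots,\g_m'=\beta$ in which consecutive curves are \emph{disjoint} and non-isotopic.

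The second step is to upgrade each ``disjoint and distinct'' step to a chain of curves meeting once. So I would prove the following local statement: if $\delta$ and $\delta'$ are disjoint, non-isotopic, symmetric nonseparating simple closed curves on $(S,\u)$ with $g(S)\geq 2$, then there is a symmetric nonseparating simple closed curve $\epsilon$ meeting each of $\delta,\delta'$ transversally in a single point. Passing to the quotient $S_{/\u}=S_{0,2g+2}$ via the orbit map $p_\u$, a symmetric nonseparating curve corresponds to a simple closed curve bounding a $2$-punctured disc (cf.\ the identification of $C_{\mathrm{ns}}(S_g,\u)$ with $C_b(S_{0,2g+2})$ made just before Theorem~\ref{sym complex}); two such curves meeting once upstairs correspond downstairs to two such curves whose images bound discs containing disjoint pairs of branch points whose boundaries are ``linked'' in the standard way (their geometric intersection number on $S_{0,2g+2}$ is $2$, in the configuration where the two $2$-element subsets of branch points interleave). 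Concretely, if $\delta$ encircles branch points $\{b_1,b_2\}$ and $\delta'$ encircles $\{b_3,b_4\}$ (all four distinct, since $\delta,\delta'$ are disjoint and non-isotopic---if they shared a branch pair they'd be isotopic, and $2g+2\geq 6$ leaves room when they share one point), one takes $\epsilon$ to encircle $\{b_2,b_3\}$; then $\epsilon$ meets $\delta$ once and $\delta'$ once upstairs. When $\delta$ and $\delta'$ surround branch-point pairs sharing a single point, a similar explicit choice works, and the case where their complementary regions force a different combinatorial pattern is handled by first replacing $\delta'$ by an intermediate curve. I would phrase this cleanly using the change-of-coordinates principle on $S_{0,2g+2}$ to put $\delta,\delta'$ in a standard position.

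Concatenating, for each $i$ the pair $\g_i',\g_{i+1}'$ is joined by a length-$2$ chain $\g_i',\epsilon_i,\g_{i+1}'$ with consecutive curves meeting once; splicing these together (and noting a curve may legitimately appear meeting its neighbor once, which is allowed by the statement---the $\g_i$ need not be pairwise distinct or pairwise disjoint) yields the desired chain $\g_0=\alpha,\ldots,\g_k=\beta$. The one point requiring care is that all curves produced must be genuinely symmetric and nonseparating: symmetry is automatic since we work with curves that are $\u$-invariant, equivalently preimages of curves bounding $2$-punctured discs downstairs, and nonseparating is exactly the condition that the downstairs curve bounds a disc with precisely two branch points.

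\textbf{Main obstacle.} The hard part is the local step: showing that any two disjoint non-isotopic symmetric nonseparating curves can be joined by a length-$2$ chain of single-intersection symmetric curves. This is where one must actually use $g(S)\geq 2$ (for $g=1$ there are too few branch points and the statement fails), and it requires a genuine, if elementary, case analysis on the possible configurations of two $2$-point subsets among $2g+2\geq 6$ branch points on the sphere---equivalently, the possible isotopy types of disjoint curve pairs in $C_b(S_{0,2g+2})$. Everything else is assembling cited results.
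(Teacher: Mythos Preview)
Your approach is essentially the paper's: connectivity of $C_{\mathrm{ns}}(S,\u,\cP)$ gives a path of disjoint symmetric nonseparating curves, and then a local step inserts between any two disjoint such curves a symmetric curve meeting each once. The paper's proof is extremely terse on the local step (it just says ``it is easy to see''), whereas you outline it via the quotient to $S_{0,2g+2}$ and the identification of symmetric nonseparating curves with curves bounding $2$-punctured discs; that is a correct and natural way to make the step explicit. One small remark on your local analysis: disjoint non-isotopic curves in $C_b(S_{0,2g+2})$ necessarily enclose disjoint pairs of branch points (if one $2$-punctured disc contained the other they would be isotopic), so the ``share one branch point'' subcase you mention does not actually occur.

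There is one unnecessary detour. You first invoke Proposition~\ref{homotopyequiv} to reduce to $\cP=\emptyset$, then cite Corollary~\ref{hyp complex} for $C_{\mathrm{ns}}(S,\u)$. But Corollary~\ref{hyp complex} already asserts that $C_{\mathrm{ns}}(S,\u,\cP)$ is $(g-1)$-spherical for arbitrary $\cP$, so no reduction is needed; and the paper indeed applies it directly to the marked complex. More importantly, if you literally reduce to $\cP=\emptyset$, your chain $\g_0,\ldots,\g_k$ lives on $S$, not on $S\ssm\cP$, and you then owe an argument that it can be lifted back to $S\ssm\cP$ while keeping all the single-intersection conditions. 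This is fixable but awkward. The cleaner route, which the paper takes, is to run both the connectivity step and the local step directly on $S\ssm\cP$: the edge-path stays in $C_{\mathrm{ns}}(S,\u,\cP)$, and the local construction (finding $\epsilon$ meeting $\delta$ and $\delta'$ once) works just as well relative to the marked points, since the branch-point combinatorics you use is insensitive to a finite set of extra marked points on the sphere.
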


\begin{proof}It is easy to see that, for two disjoint nonseparating symmetric simple closed curves $\alpha$ and $\beta$ on 
$S\ssm\cP$, there is a a symmetric simple closed curve $\g$ which intersects both $\alpha$ and $\beta$ transversally in a single point. The claim of the 
proposition then follows from the connectedness of the curve complex $C_\mathrm{ns}(S,\u,\cP)$ (cf.\ Corollary~\ref{hyp complex} above).
\end{proof}

\section{Linear representations of hyperelliptic mapping class groups}\label{representations}

\subsection{The Putman-Wieland conjecture}\label{PWconjecture}
In \cite{PW}, Putman and Wieland made a conjecture about virtual linear representations of mapping class groups 
which we present here in a slightly different but equivalent formulation. Let $(S,\cP)$ be a hyperbolic marked closed surface 
and let $Q\in S\ssm\cP$. We then have the Birman exact sequence:
\[1\to\pi_1(S\ssm\cP,Q)\to\G(S,\cP,Q)\sr{p_Q}{\to}\G(S,\cP)\to 1.\]
For a finite index subgroup $\G^\l$ of $\G(S,\cP,Q)$, let $\Pi^\l:=\pi_1(S\ssm\cP,Q)\cap\U$.
By restriction, we then get a short exact sequence:
\[1\to\Pi^\l\to\G^\l\to p_Q(\G^{\l})\to 1.\]

Let $(S\ssm\cP)^\l\to S\ssm\cP$ be the unramified covering associated to the subgroup $\Pi^\l$ of $\pi_1(S\ssm\cP,Q)$ and let $S^\l\to S$ 
be the (possibly ramified) covering obtained from this filling in all punctures. The outer representation associated to the above short exact
sequence induces a linear representation: $p_Q(\G^{\l})\to\Sp(H^1(S^\l,\Q))$. 
The \emph{Putman-Wieland conjecture} states that, for $g(S)\geq 2$, all nontrivial orbits of this representation are infinite.

\subsection{The hyperelliptic Putman-Wieland problem}\label{HypProblem}
From now to the end of Section~\ref{representations}, we assume that $(S,\u,\cP)$ is a hyperbolic marked hyperelliptic \emph{closed} surface.
Let us consider the Birman exact sequence~(\ref{BirmanExact}):
\[1\to\pi_1(S\ssm\cP,Q)\to\U(S,\cP,Q)\sr{f_Q}{\to}\U(S,\cP)\to 1.\]
For a finite index subgroup $\U^\l$ of $\U(S,\cP,Q)$, let $\Pi^\l:=\pi_1(S\ssm\cP,Q)\cap\U^\l$. 
There is then a short exact sequence:
\begin{equation}\label{inducedshort}
1\to\Pi^\l\to\U^\l\to f_Q(\U^\l)\to 1
\end{equation}
and an associated representation $\rho^\l\co  f_Q(\U^\l)\to\Out(\Pi^\l)$. 

Let $(S\ssm\cP)^\l\to S\ssm\cP$ be the unramified covering associated to the subgroup $\Pi^\l$ of $\pi_1(S\ssm\cP,Q)$ and let $S^\l\to S$ 
be the (possibly ramified) covering obtained from this filling in all punctures. The representation $\rho^\l$ then induces a linear representation:
\begin{equation}\label{virtualhyp}
L\rho^\l\co  f_Q(\U^\l)\to\Sp(H^1(S^\l,\Q)).
\end{equation}

The \emph{hyperelliptic Putman-Wieland problem} asks whether, for every finite index subgroup 
$\U^\l$ of $\U(S,\cP,Q)$, all nontrivial orbits of $L\rho^\l$ are infinite. 

In Corollary~\ref{NoHypPutWie}, we will show that the answer is no for $g(S)\geq 2$ and $\sharp\cP\geq 4$. In particular, since for $g(S)=2$, 
there holds $\U(S,\cP,Q)=\G(S,\cP,Q)$, this will also provide a counterexample to Putman-Wieland conjecture in genus $2$.



\subsection{Geometric interpretation of the hyperelliptic Putman-Wieland problem}
The hyperelliptic Putman-Wieland problem can be reformulated as follows.

Let $\cM(S,\u,\cP,Q)^\l\to\cM(S,\u,\cP,Q)$ be the finite \'etale covering associated to the subgroup $\U^\l$ of $\U(S,\u,\cP,Q)$. 
For simplicity, let us denote by $\cM(S,\u,\cP)^{f_Q(\l)}\to\cM(S,\u,\cP)$ the finite \'etale covering associated to the subgroup $f_Q(\U^\l)$ of $\U(S,\u,\cP)$. 
This coincides with the normalization of $\cM(S,\u,\cP)$ in $\cM(S,\u,\cP,Q)^\l$. The natural morphism $\phi_Q\co\cM(S,\u,\cP,Q)^\l\to\cM(S,\u,\cP)^{f_Q(\l)}$ 
is then a connected smooth curve with fibers homeomorphic to the surface $(S\ssm\cP)^\l$.

Extending the morphism $\phi_Q$ to DM compactifications, we obtain a projective flat morphism 
$\bar\phi_Q\co\ccM(S,\u,\cP,Q)^\l\to\ccM(S,\u,\cP)^{f_Q(\l)}$, with fibers semistable curves, such that
its restriction over $\cM(S,\u,\cP)^{f_Q(\l)}$: 
\begin{equation}\label{relativecomp}
\td{\phi}_Q\co\cC^\l\to\cM(S,\u,\cP)^{f_Q(\l)}
\end{equation}
is a smooth proper curve with fibers homeomorphic to the surface $S^\l$ defined above.
The representation~(\ref{virtualhyp}) $L\rho^\l\co  f_Q(\U^\l)\to\Sp(H^1(S^\l,\Q))$ is then the monodromy representation associated 
to the proper smooth curve~(\ref{relativecomp}).

This interpretation is important because it allows to implement Hodge theoretic techniques. Indeed, since moduli stacks of (hyperelliptic) 
curves are $K(\pi,1)$-spaces, the homology and cohomology of (hyperelliptic) mapping class groups with rational coefficients identify with 
the singular homology and cohomology of smooth DM stacks, as such they are endowed with a natural mixed Hodge structure. 
The same holds for finite index subgroups of (hyperelliptic) mapping class groups. A first interesting consequence is the following lemma:

\begin{lemma}\label{lemma:AH} Let $(S,\u,\cP)$ be a hyperbolic marked hyperelliptic closed surface and $\U^\l$ 
a finite index subgroup of $\PU(S,\cP\cup\{Q\})$. Associated to the short exact sequence~(\ref{inducedshort}), 
there is a short exact sequence of polarized Hodge structures of weight one:
\begin{equation}\label{purexact}
0\to W_1H^1(f_Q(\U^\l),\Q)\to  W_1H^1(\U^\l,\Q)\to H^1(S^\l,\Q))^{f_Q(\U^\l)}\to 0.
\end{equation}
\end{lemma}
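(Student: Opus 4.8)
The statement is an Leray/Hodge-theoretic consequence of the fact that the fibration $\td\phi_Q\co\cC^\U\to\cM(S,\u,\cP)^{f_Q(\U)}$ of~(\ref{relativecomp}) realizes the group extension~(\ref{inducedshort}) on $\pi_1$. I would run the Leray spectral sequence of $\td\phi_Q$ in rational cohomology. Writing $B:=\cM(S,\u,\cP)^{f_Q(\U)}$ (a smooth DM stack, hence a $K(\pi,1)$ with $\pi_1=f_Q(\U)$) and $F:=S_\U$ for the fiber, the low-degree exact sequence of the spectral sequence $E_2^{p,q}=H^p(B,R^q(\td\phi_Q)_*\Q)\Rightarrow H^{p+q}(\cC^\U,\Q)$ reads
\[
0\to H^1(B,\Q)\to H^1(\cC^\U,\Q)\to H^0(B,R^1(\td\phi_Q)_*\Q)\xrightarrow{d_2} H^2(B,\Q).
\]
Since $\cC^\U$ is homotopy equivalent to the classifying space of $\U$ (it is a $K(\pi,1)$ because its universal cover deformation retracts to that of the fiber, which is a surface, fibered over the contractible $\cT$-type base — this is where one invokes that moduli stacks of hyperelliptic curves are $K(\pi,1)$'s, as used repeatedly above), we may replace $H^\ast(\cC^\U,\Q)$ by $H^\ast(\U,\Q)$, $H^\ast(B,\Q)$ by $H^\ast(f_Q(\U),\Q)$, and $H^0(B,R^1(\td\phi_Q)_*\Q)$ by the monodromy invariants $H^1(S_\U,\Q)^{f_Q(\U)}$ of the representation $L\rho_\U$. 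This already gives the exactness of~(\ref{purexact}) on the level of vector spaces, \emph{provided} the differential $d_2$ vanishes.

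**Vanishing of $d_2$ and the Hodge-structure refinement.** This is the step I expect to be the crux, and it is precisely where Hodge theory enters (and why the statement is phrased with weight filtrations $W_1$). The Leray spectral sequence of the smooth \emph{proper} morphism $\td\phi_Q$ is a spectral sequence of mixed Hodge structures (Deligne), and $R^1(\td\phi_Q)_*\Q$ underlies a polarized variation of Hodge structure of weight one. The transgression $d_2\co H^0(B,R^1)\to H^2(B,\Q)$ is then a morphism of Hodge structures from a pure structure of weight $1$ to one whose weight-$1$ part is $W_1H^2(f_Q(\U),\Q)$; composing with the projection onto $\mathrm{gr}^W_1$ and using that $d_2$ on the invariants factors through the primitive part, one argues $d_2=0$ after passing to $W_1$ — equivalently, one takes $W_1$ of the whole low-degree sequence. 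Because taking $W_1$ (the bottom weight-one piece) is exact on mixed Hodge structures with weights $\ge 1$, and because $H^0(B,R^1)=H^1(F,\Q)^{f_Q(\U)}$ is already pure of weight one (a sub-Hodge-structure of the weight-one structure $H^1(F,\Q)$), we obtain the short exact sequence
\[
0\to W_1H^1(f_Q(\U),\Q)\to W_1H^1(\U,\Q)\to H^1(S_\U,\Q)^{f_Q(\U)}\to 0
\]
of polarized Hodge structures of weight one. Polarizations: the middle and left terms carry the weight-one polarization coming from the cup product / Poincaré-type pairing on the smooth DM stacks (the curve $\cC^\U$ and the base), the right term the fiberwise intersection form on $S_\U$, and the maps are visibly compatible with these.

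**What I would need to check carefully.** The genuinely delicate points are: (i) that $\td\phi_Q$ is proper and smooth with fiber $S_\U$ — this is asserted in~(\ref{relativecomp}) and I would simply cite it; (ii) the identification $H^0(B,R^1(\td\phi_Q)_*\Q)\cong H^1(S_\U,\Q)^{f_Q(\U)}$, which is the definition of the local system $R^1$ together with the identification of its monodromy with $L\rho_\U$ from~(\ref{virtualhyp}); (iii) the $K(\pi,1)$ claim for $\cC^\U$, needed to convert stack cohomology into group cohomology — this follows from the corresponding property of the hyperelliptic moduli stacks together with the fact that $\cC^\U$ is a fiber bundle in surfaces over such a stack; and (iv) the vanishing of $d_2$ after applying $W_1$, for which the clean statement is that the restriction of $d_2$ to $W_1H^0(B,R^1)=H^0(B,R^1)$ lands in $W_1H^2(B,\Q)$ but, being a morphism of pure weight-one Hodge structures into a structure of strictly higher weight on the relevant graded piece, must vanish — here one should be a little careful that $W_1H^2$ of a smooth stack can be nonzero, so the correct argument is that $d_2$ is the cup product with a class in $H^2$ that is of type $(1,1)$ or, more robustly, that the sequence obtained by applying the exact functor $W_1$ to the five-term sequence is automatically exact, which forces the relevant connecting map out of $W_1H^1(\U,\Q)$ to be zero. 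This last point is the main obstacle and the place I would spend the most care.
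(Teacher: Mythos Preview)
Your overall strategy is the same as the paper's: run the Leray spectral sequence for the smooth proper family $\td\phi_Q\co\cC^\U\to B:=\cM(S,\u,\cP)^{f_Q(\U)}$, identify the low-degree terms, and then take $W_1$. Two points deserve correction.

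\textbf{The $d_2$ vanishing.} You flag this as ``the main obstacle'' and try to extract it from weight considerations. The paper simply invokes Deligne's theorem (\cite{deligne:lefschetz}): for a smooth \emph{proper} morphism the rational Leray spectral sequence degenerates at $E_2$. This gives directly the short exact sequence
\[
0\to H^1(B,\Q)\to H^1(\cC^\U,\Q)\to H^0(B,R^1\td\phi_{Q\ast}\Q)\to 0,
\]
no ad hoc weight argument needed. One then passes to $W_1$ by strictness. (Your weight heuristic can be made to work---$H^0(B,R^1)$ is pure of weight $1$ while $H^2(B,\Q)$, $B$ smooth, has weights $\geq 2$---but this is essentially a special case of Deligne's argument, so one may as well cite it.)

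\textbf{The identification of $H^1(\cC^\U,\Q)$ with $H^1(\U,\Q)$.} Here you have a genuine slip. You assert that $\cC^\U$ is a $K(\U,1)$. It is a $K(\pi,1)$, but its fundamental group is the extension of $f_Q(\U)$ by $\pi_1(S_\U)$, \emph{not} by $\Pi_\U=\pi_1((S\ssm\cP)_\U)$; these differ precisely because $S_\U$ is obtained from $(S\ssm\cP)_\U$ by filling in the punctures. What is true, and what the paper uses, is that the open family $\cM(S,\u,\cP,Q)^\U$ (with punctured fibers) is a $K(\U,1)$, and sits inside $\cC^\U$ as the complement of a smooth divisor. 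The Gysin sequence (or the observation that the added classes have weight $2$) then gives $W_1H^1(\cC^\U,\Q)=W_1H^1(\cM(S,\u,\cP,Q)^\U,\Q)=W_1H^1(\U,\Q)$. So the identification you need holds only after applying $W_1$, which is all the lemma requires; but your stated justification for it is incorrect and should be replaced by this comparison.
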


\begin{proof}By a theorem of Deligne (cf.\ \cite{deligne:lefschetz}), the associated Leray spectral sequence associated to the proper curve 
$\td{\phi}_Q\co\cC^\l\to\cM(S,\u,\cP)^{f_Q(\l)}$ degenerates at the $E_2$ level and there is a short exact sequence:
\[0\to H^1(\cM(S,\u,\cP)^{f_Q(\l)},\Q)\sr{f_\ast}{\to}H^1(\cC^\l,\Q)\to H^0(\cM(S,\u,\cP)^{f_Q(\l)},R^1\td{\phi}_{Q\ast}\Q)\to 0,\]
where the space $H^0(\cM(S,\u,\cP)^{f_Q(\l)},R^1\td{\phi}_{Q\ast}\Q)$ identifies with the space of monodromy invariants
$H^1(\td{\phi}_Q^{-1}(z),\Q)^{f_Q(\U^\l)}$, for all $z\in\cM(S,\u,\cP)^{f_Q(\l)}$.

It follows that, after identifying $H^1(\td{\phi}_Q^{-1}(z),\Q)^{f_Q(\U^\l)}$ with $H^1(S^\l,\Q))^{f_Q(\U^\l)}$,
there is a short exact sequence:
\[0\to H^1(\cM(S,\u,\cP)^{f_Q(\l)},\Q)\sr{\td{\phi}_Q^\ast}{\to}H^1(\cC^\l,\Q)\to  H^1(S^\l,\Q))^{f_Q(\U^\l)}\to 0.\]
This sequence remains exact passing to its weight $1$ part:
\[0\to W_1 H^1(\cM(S,\u,\cP)^{f_Q(\l)},\Q)\sr{\td{\phi}_Q^\ast}{\to}W_1 H^1(\cC^\l,\Q)\to  H^1(S^\l,\Q))^{f_Q(\U^\l)}\to 0.\]
The conclusion of the lemma then follows observing that we have:
\[\begin{array}{ll}
&H^1(\cM(S,\u,\cP)^{f_Q(\l)},\Q)=H^1(f_Q(\U^\l),\Q)\\
\mbox{and}\hspace{0.5cm}&\\
&W_1 H^1(\cC^\l,\Q)=W_1 H^1(\cM(S,\u,\cP,Q)^\l,\Q)=W_1 H^1(\U^\l,\Q).
\end{array}\]
\end{proof}

\subsection{The pure cohomology of hyperelliptic mapping class groups}
In this subsection, we collect more Hodge theoretic results. Some are of independent interest and some we will need later.

\begin{proposition}\label{first homology H}For $(S,\u,\cP)$ a marked hyperelliptic surface of genus $\geq 2$, 
there holds $W_1H^1(\PU(S,\cP),\Q)=\{0\}$. In particular, $W_1H^1(\U(S,\cP),\Q)=W_1H^1(\U(S,\vec\cP),\Q)=\{0\}$.
\end{proposition}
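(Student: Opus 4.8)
The plan is to show that $\PU(S,\cP)$ has no nontrivial weight-one part in its first rational cohomology by a dévissage on the number of marked points, using the Birman-type exact sequences already established. First I would reduce to the unmarked case: the short exact sequences~(\ref{BirmanExact}) give, for each point $Q$, the extension $1\to\pi_1(S\ssm\cP,Q)\to\PU(S,\cP\cup\{Q\})\to\PU(S,\cP)\to 1$, and since $\cM(S,\u,\cP,Q)\to\cM(S,\u,\cP)$ (in the $\cP$-marked guise) is a smooth proper curve after compactifying, Deligne's degeneration (exactly as in Lemma~\ref{lemma:AH}, but now the relevant covering is trivial) yields a weight-filtered exact sequence
\[0\to W_1H^1(\PU(S,\cP),\Q)\to W_1H^1(\PU(S,\cP\cup\{Q\}),\Q)\to H^1(S\ssm\cP,\Q)^{\PU(S,\cP)}\to 0 .\]
The point is that adding a marked point contributes only the monodromy-invariant part of $H^1$ of a once-punctured-more surface, and a hyperelliptic mapping class group acting through $\Sp$ on the homology of a surface of genus $\geq 2$ (or even genus $1$ with the hyperelliptic involution acting as $-1$) has no nonzero invariants — indeed $\u$ alone acts as $-\mathrm{id}$ on $H_1(\ol S,\Q)$, killing all invariants, and the puncture classes are permuted without a global invariant when there is more than one, while a single puncture contributes a class already accounted for in $H_1(\ol S)$. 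So the marked case reduces to $\cP=\emptyset$.

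Next I would handle $\PU(S)$ for $S$ with punctures or boundary by further dévissage. By the isomorphism $\PU(S)\cong\PG(S_{/\u},\cB_\u)$ whenever $S$ has at least two symmetric punctures or boundary components (the remark following~(\ref{PHypEx})), and more generally by Theorem~\ref{Q} — which expresses $\PU(S^\circ,\cP)$ as an extension of $\PU(S,\cP\cup\{Q\})$ by the group $N_\circ$ generated by symmetric separating twists around two-punctured discs — I can strip off symmetric punctures one pair at a time. Each such step replaces $\PU$ by an extension whose kernel $N_\circ$ is a free group of infinite rank (Theorem~\ref{HypBirman}); geometrically $N_\circ$ is generated by loops around a boundary divisor, so in the Hodge-theoretic picture the relevant Gysin/weight argument shows $W_1H^1$ does not grow. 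Thus everything reduces to $S$ a closed surface of genus $g\geq 2$ (the case $g=1$ being excluded by hypothesis, or handled via $S_{1,1}$ separately using $\PU(S_{1,1})\cong\Z/2\times\PG(S_{0,4})$, whose $W_1H^1$ over $\Q$ vanishes since $\PG(S_{0,4})$ is free and $W_1H^1$ of a free group's classifying space — here the moduli stack $\PM(S_{0,4})$, an affine curve — has no weight-one classes because $H^1$ of $\cM_{0,4}\cong\P^1\ssm\{0,1,\infty\}$ is pure of weight $2$).

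For the base case $S=S_g$ closed, $g\geq 2$: here $\PU(S)=\U(S)[2]$ by Proposition~\ref{purehyp}, and via Birman--Hilden $\PU(S)$ is, up to the central $\Z/2$ generated by $\u$, isomorphic to $\PG(S_{0,2g+2})$, the pure mapping class group of the $(2g+2)$-punctured sphere. The corresponding moduli stack is $\PM(S_{0,2g+2})\cong\cM_{0,2g+2}$, the configuration space of $2g+2$ ordered points on $\P^1$ modulo $\mathrm{PGL}_2$, which is a smooth affine variety whose cohomology is well-known to be pure of Tate type with $H^k$ pure of weight $2k$ — in particular $H^1$ is pure of weight $2$, so $W_1H^1=0$. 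The central $\Z/2$ does not affect rational cohomology. Hence $W_1H^1(\PU(S),\Q)=0$ in the base case, and the dévissage propagates this to all marked hyperelliptic surfaces of genus $\geq 2$. Finally, $\U(S,\cP)$ and $\U(S,\vec\cP)$ contain $\PU(S,\cP)$ (resp.\ $\PU(S,\vec\cP)$) as a finite-index normal subgroup, so by the transfer argument $H^1(\U(S,\cP),\Q)\hookrightarrow H^1(\PU(S,\cP),\Q)$ — more precisely $H^1(\U(S,\cP),\Q)=H^1(\PU(S,\cP),\Q)^{G}$ for the finite quotient $G$ — and this injection is a morphism of mixed Hodge structures, so $W_1H^1(\U(S,\cP),\Q)=0$ as well, and likewise for $\U(S,\vec\cP)$.

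The main obstacle I expect is making the dévissage steps genuinely compatible with the weight filtration: one must verify that in each Birman-type extension the connecting/Gysin maps are morphisms of mixed Hodge structures and that the newly-appearing classes (loops around boundary divisors, or the monodromy-invariant $H^1$ of the fiber) either land in weight $2$ or vanish by the $\Sp$-invariance argument. The cleanest route is to phrase everything through the moduli stacks $\mathrm{P}\cM(S,\u,\cP)$ and their DM compactifications (available by Section~\ref{geoint}), apply Deligne's theorem on degeneration of the Leray spectral sequence for the universal curve exactly as in Lemma~\ref{lemma:AH}, and invoke purity of $H^1$ of the genus-$0$ configuration spaces as the terminal input.
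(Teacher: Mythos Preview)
Your argument is correct, but it takes a different route from the paper's. The paper's proof is a two-line application of Corollary~\ref{simplyconnected}: since $\CPM(S,\u,\cP)$ is simply connected for $g(S)\geq 2$, its rational $H^1$ vanishes, and for any smooth compactification one has $W_1H^1(\PM(S,\u,\cP),\Q)\cong H^1(\CPM(S,\u,\cP),\Q)$. The heavy lifting was already done in Theorem~\ref{generationpure}, which exhibits generators of $\PU(S,\cP)$ that are all loops about boundary divisors. Your d\'evissage instead bypasses Theorem~\ref{generationpure} and Corollary~\ref{simplyconnected} entirely: you peel off marked points via Lemma~\ref{lemma:AH} (noting that $\u$ acts as $-\mathrm{id}$ on $H^1(S,\Q)$, so the invariant term vanishes), and then in the base case you invoke Birman--Hilden plus the well-known purity of $H^\bullet(\cM_{0,n},\Q)$ (equivalently, simple connectivity of $\ol\cM_{0,n}$). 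Both routes ultimately rest on a genus-$0$ simple-connectivity input, but the paper packages the whole marked case into one compactification, while you trade that for a Hodge-theoretic induction.

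Two small remarks. First, recall the standing hypothesis of Section~\ref{representations} that $S$ is \emph{closed}; your ``step 2'' stripping symmetric punctures and boundary from $S$ is therefore unnecessary here. Second, in your displayed short exact sequence the rightmost term should be $H^1(S,\Q)^{\PU(S,\cP)}$ (the compactified fiber, as in Lemma~\ref{lemma:AH}), not $H^1(S\ssm\cP,\Q)^{\PU(S,\cP)}$; your subsequent reasoning about $\u$ acting on $H_1(\ol S,\Q)$ already uses the correct object, so this is only a notational slip.
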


\begin{proof}By the first item of Corollary~\ref{simplyconnected}, we have that $H^1(\mathrm{P}\ccM(S,\u,\cP),\Q)=\{0\}$ and, since 
$\mathrm{P}\ccM(S,\u,\cP)$ is a smooth compactification of $\mathrm{P}\cM(S,\u,\cP)$, there is an isomorphism: 
\[H^1(\mathrm{P}\ccM(S,\u,\cP),\Q)\cong W_1 H^1(\mathrm{P}\cM(S,\u,\cP),\Q).\]
\end{proof}

\begin{proposition}\label{W_1 no trivial}For $g(S)\geq 1$, the hyperelliptic mapping class group $\U(S,\cP)$ contains a normal finite index 
subgroup $\U^\l$ such that $W_1H^1(\U^\l)\neq \{0\}$.
\end{proposition}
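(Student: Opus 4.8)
\emph{Proof plan.} The idea is to reduce to a closed surface of each genus and then, via Birman--Hilden, pull back an explicit positive--genus finite covering of the thrice--punctured sphere. The one input from Hodge theory I will use repeatedly is the following principle: if $\phi\co A\to B$ is a homomorphism between topological fundamental groups of complex DM stacks induced either by a morphism of such stacks or by a finite covering, then $\phi^\ast\co H^1(B,\Q)\to H^1(A,\Q)$ is a morphism of mixed Hodge structures, and it is injective whenever $\phi$ is surjective, and also whenever $\phi$ is the inclusion of a finite index subgroup (transfer argument over $\Q$). Since $W_1H^1$ is functorial for morphisms of mixed Hodge structures, the property ``some finite index subgroup has nonzero $W_1H^1$'' will propagate from the target of such a surjection to its source (replace the witnessing subgroup by its full preimage), from a finite index overgroup to the group (intersect with the overgroup's witnessing subgroup), and to any further finite index subgroup; in particular, once a finite index subgroup $\U_0\leq\U(S,\cP)$ with $W_1H^1(\U_0,\Q)\neq\{0\}$ has been produced, I will take for $\U$ its normal core in $\U(S,\cP)$, which is normal of finite index and still has $W_1H^1(\U,\Q)\neq\{0\}$. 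Feeding in the surjection $\U(S,\cP)\twoheadrightarrow\U(S)$, the finite index inclusion $\U(S)\hookra\U(\ring S)$ coming from $\G(S)\hookra\G(\ring S)$, the hyperelliptic Birman sequence~(\ref{fullHypBirmanEx}) (which removes a symmetric pair of punctures), and the analogous surjection obtained by forgetting a Weierstrass puncture, this reduces everything to the case $S=S_g$ closed with $g\geq1$.

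For $g\geq2$ I argue as follows. By~(\ref{PHypEx}) the finite index subgroup $\PU(S_g)\leq\U(S_g)$ surjects onto $\PG(S_{0,2g+2})$, which in turn surjects (since $2g+2\geq6$) onto $\PG(S_{0,4})\cong F_2=\pi_1(\mathbb{P}^1\ssm\{0,1,\infty\})$ by forgetting all but four of the marked points; both maps are geometric. Next I take $C$ to be the smooth projective model of the affine curve $\{y^3=x(x-1)\}$: the map $x\co C\to\mathbb{P}^1$ is a cyclic triple cover totally ramified over $0,1,\infty$ and unramified elsewhere, so Riemann--Hurwitz gives $g(C)=1$; the open curve $C^\circ:=x^{-1}(\mathbb{P}^1\ssm\{0,1,\infty\})$ is $C$ minus three points, hence $W_1H^1(C^\circ,\Q)=H^1(C,\Q)\cong\Q^2\neq\{0\}$. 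Thus $\pi_1(C^\circ)$ is an index $3$ subgroup $H\leq\PG(S_{0,4})$ with nonzero $W_1H^1$. Pulling $H$ back along the chain $\PU(S_g)\twoheadrightarrow\PG(S_{0,2g+2})\twoheadrightarrow\PG(S_{0,4})$ and invoking $\PU(S_g)\leq\U(S_g)$, I obtain a finite index $\U_0\leq\U(S_g)$ with $W_1H^1(\U_0,\Q)\neq\{0\}$ and finish with the normal core.

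For $g=1$: if $S$ has at least one puncture, boundary component or marked point, the reductions of the first paragraph bring me to $S_{1,1}$, where item (i) of Lemma~\ref{squares} gives a finite index subgroup $\PU(S_{1,1})\cong\Z/2\times\PG(S_{0,4})$ of $\U(S_{1,1})$ surjecting onto $\PG(S_{0,4})\cong F_2$, and I pull back $H$ exactly as above. If instead $S=S_1$ is closed with $\cP=\emptyset$, then $\U(S_1)$ is the centralizer of $-I$ in $\G(S_1)\cong\SL_2(\Z)$, that is, all of $\SL_2(\Z)$, and I take $\U=\Gamma(6)$: it is normal of finite index and torsion free, $\Gamma(6)\backslash\H$ is the modular curve $Y(6)$ whose smooth compactification $X(6)$ has genus $1$, so $W_1H^1(\Gamma(6),\Q)=H^1(X(6),\Q)\neq\{0\}$. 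I expect the main obstacle to be purely a matter of bookkeeping rather than of ideas: one must check that every homomorphism used in the reductions and in the Birman--Hilden chain is genuinely induced by a morphism of moduli stacks or by a finite covering, so that the injectivity statements on $H^1$ upgrade to statements about the weight filtration; granting that, the argument is forced.
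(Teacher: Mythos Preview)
Your argument is correct and follows essentially the same route as the paper's proof: reduce to the unmarked closed case via the epimorphism $\U(S,\cP)\to\U(S)$, then for $g\geq 2$ pass via Birman--Hilden to $\PG(S_{0,2g+2})$, forget marked points down to $\PG(S_{0,4})\cong\pi_1(\cM_{0,4})$, and pull back a finite cover of $\cM_{0,4}$ with positive-genus compactification; for $g=1$ use modular curves. The paper phrases the $g\geq 2$ step in terms of Teichm\"uller quotients $\cT(S,\u)/\U\to\cT(S_{0,4})/p(r(\U))$ rather than explicit curves, but this is the same morphism of level structures you are using. Your choice of the cyclic triple cover $y^3=x(x-1)$ and of $\Gamma(6)$ makes the existence step more concrete than the paper's ``for a suitable $\U$'', which is a mild expository plus.

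Two small points of bookkeeping. First, since the ambient section assumes $S$ is closed, your reductions involving boundary components, Weierstrass punctures and the hyperelliptic Birman sequence~(\ref{fullHypBirmanEx}) are never invoked; only the surjection $\U(S,\cP)\to\U(S)$ is needed, as in the paper. Second, for $g=1$ your reductions actually land in $\U(S_1)\cong\SL_2(\Z)$ (the closed torus), not in $S_{1,1}$ as you write; this is harmless because you do handle $\U(S_1)$ directly with $\Gamma(6)$, and in any case $\U(S_1,\{P\})\cong\G(S_{1,1})\cong\SL_2(\Z)$, so the distinction is immaterial.
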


\begin{proof}It is enough to prove the proposition for $g(S)=1$, $n(S)=1$ and  $g(S)\geq 2$ and $n(S)=0$, since, if
$p\co\U(S,\cP)\to\U(S)$ is the natural epimorphism, then, for a finite index subgroup 
$\U^\l$ of $\U(S,\cP)$, the natural homomorphism $p^\ast\co W_1H^1(p(\U^\l))\to W_1H^1(\U^\l)$ is injective.

For $\G(S_{1,1})=\SL_2(\Z)$, the claim is well-known since $W_1H^1(\U^\l)$
is just the first cohomology group of the projective modular curve associated to $\U^\l$. 

Let us assume then $n(S)=0$ and $g(S)\geq 2$. Let $\cT(S,\u)$ be the universal cover of the moduli stack of hyperelliptic curves $\cM(S,\u)$. 
For $g(S)=2$, this is just the ordinary genus $2$ Teichm\"uller space $\cT(S)$ associated to $S$, while, for $g(S)\geq 3$, it can be identified 
with a closed contractible submanifold of the Teichm\"uller space $\cT(S)$. Since there is a natural \'etale covering $\cM(S,\u)\to\cM(S_{0,2g+2})$, 
we have a natural isomorphism $\cT(S,\u)\cong\cT(S_{0,2g+2})$. There is also a natural epimorphism $p\co\PG(S_{0,2g+2})\to\PG(S_{0,4})$ 
and a corresponding $p$-equivariant surjective map $\phi\co\cT(S_{0,2g+2})\to\cT(S_{0,4})$.

Let us assume that $\U^\l$ is contained in the abelian level $\U(S)^{[2]}$. By Proposition~3.3 in \cite{Hyp}, there is a natural epimorphism 
$r\co\U(S)^{[2]}\to\PG(S_{0,2g+2})$. It follows that the map $\phi$ induces 
a surjective map $\cT(S,\u)/\U^\l\to\cT(S_{0,4})/p(r(\U^\l))$, which induces an epimorphism on fundamental groups. 
Therefore, we have a natural injective map:
\[W_1H^1(\cT(S_{0,4})/p(r(\U^\l)))\hookra W_1H^1(\cT(S,\u)/\U^\l)=W_1H^1(\U^\l).\]

Since $\PM(S_{0,4})\cong\P^1\ssm\{0,1,\infty\}$, there is a normal finite index subgroup $N$ of $\PG(S_{0,4})$ such that
the associated covering of $\PM(S_{0,4})$ has genus $\geq 1$. 

For $\U^\l$ any normal finite index subgroup of $\U(S,\cP)$
which is contained in $r^{-1}(p^{-1}(N))$, we then have $W_1H^1(\cT(S_{0,4})/p(r(\U^\l)))\neq \{0\}$ and the claim of the proposition follows.
\end{proof}

A consequence of independent interest of Proposition~\ref{W_1 no trivial} is a slight improvement of Corollary~1.1 in \cite{Stylianakis}  
(cf.\ \cite{Funar} and \cite{Masbaum} for the mapping class group version of this result):

\begin{corollary}\label{infinite}For every even integer $m$ and for $k\gg 0$, the normal subgroup of the hyperelliptic mapping class group 
$\U(S,\cP)$ generated by the ${m^k}$-powers of symmetric multitwists is of infinite index.
\end{corollary}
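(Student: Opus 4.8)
The plan is to deduce Corollary~\ref{infinite} directly from Proposition~\ref{W_1 no trivial} together with the geometric interpretation of levels as level structures on moduli stacks of hyperelliptic curves. First I would fix an even integer $m$ and invoke Proposition~\ref{W_1 no trivial} to produce a normal finite index subgroup $\U$ of $\U(S,\cP)$ with $W_1H^1(\U,\Q)\neq\{0\}$; after intersecting with a further congruence level if necessary (using that $W_1H^1$ of a subgroup restricts injectively along finite covers, as noted in the proof of Proposition~\ref{W_1 no trivial}), I may assume in addition that for some power $m^k$ every symmetric Dehn twist $\tau_\g$ and every symmetric bounding pair $\g_1,\g_2$ satisfy $\tau_\g^{m^k}\in\U$ and $\tau_{\g_1}^{m^k}\tau_{\g_2}^{m^k}\in\U$. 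This is exactly the observation recorded in the paragraph immediately preceding the corollary.

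Next I would introduce the normal subgroup $N\trianglelefteq\U$ generated by these $m^k$-th powers of symmetric multitwists, and identify $N$ with the kernel of the map from $\U=\pi_1(\cM(S,\u,\cP)^\U)$ onto $\pi_1$ of the DM compactification $\ccM(S,\u,\cP)^\U$: since the generators of $\pi_1(\cM(S,\u,\cP)^\U)$ that die in the compactification are precisely the loops around the boundary divisors, and those correspond (via the description of the DM boundary in Section~\ref{boundaryDMHyp}) to powers of Dehn twists about symmetric separating curves and to symmetric bitwists/bounding-pair maps, a level $\U$ chosen divisible enough by $m^k$ makes $N$ equal to that kernel. Hence $\U/N\cong\pi_1(\ccM(S,\u,\cP)^\U)$, which surjects onto $H_1(\ccM(S,\u,\cP)^\U,\Z)$, and tensoring with $\Q$ gives $W_1H^1(\U,\Q)$ as a quotient of $(\U/N)\otimes\Q$. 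By our choice of $\U$ this group is nonzero, so $\U/N$ is infinite; therefore $N$ has infinite index in $\U$, and a fortiori in $\U(S,\cP)$.

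Finally I would observe that the normal subgroup $M$ of $\U(S,\cP)$ generated by \emph{all} $m^k$-th powers of symmetric multitwists is contained in the normal subgroup of $\U(S,\cP)$ generated by $N$, and in any case $M\cap\U$ contains $N$ up to finite index; since $N$ already has infinite index in $\U(S,\cP)$ and $[\U(S,\cP):\U]<\infty$, the subgroup $M$ has infinite index as well. The passage from $k$ to "$k\gg 0$" is automatic: once $m^k$ is divisible enough, so is $m^{k'}$ for $k'\geq k$, and the $m^{k'}$-powers still lie in the relevant kernel. The main obstacle, and the point requiring the most care, is the identification $N=\ker(\pi_1(\cM(S,\u,\cP)^\U)\to\pi_1(\ccM(S,\u,\cP)^\U))$: one must check that the loops around \emph{every} boundary divisor of $\ccM(S,\u,\cP)^\U$ are conjugates of the listed symmetric multitwist powers, which is where the explicit structure of the hyperelliptic DM boundary from Section~\ref{boundaryDMHyp} and the stabilizer sequences~(\ref{stabPHyp}) of Section~\ref{complexsym} enter; everything else is a formal consequence of $W_1H^1\neq\{0\}$ plus finite-index bookkeeping.
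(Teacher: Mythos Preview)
Your approach is essentially the paper's own argument, and it works, but you have made one step harder than necessary. You do \emph{not} need the identification $N=\ker\bigl(\pi_1(\cM(S,\u,\cP)^\U)\to\pi_1(\ccM(S,\u,\cP)^\U)\bigr)$; the containment $N\subseteq\ker$ is all that is required, and this is immediate because each generator of $N$ is (a power of) a loop around a boundary divisor. From that containment alone one obtains the surjection $\U/N\twoheadrightarrow H_1(\ccM(S,\u,\cP)^\U,\Z)$, and the target is infinite since its rationalization is $W_1H^1(\U,\Q)\neq 0$. The paper records precisely this epimorphism in the paragraph preceding the corollary, without ever asserting equality with the kernel; the ``main obstacle'' you flag is therefore a red herring. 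The same simplification also streamlines your final paragraph: since $\U\trianglelefteq\U(S,\cP)$, the subgroup $M$ is contained in $\U$, its generators (products of $m^k$-th powers of commuting symmetric Dehn twists) lie in the kernel of $\U\to H_1(\ccM(S,\u,\cP)^\U,\Z)$, and this is stable under $\U(S,\cP)$-conjugation because the deck group permutes boundary divisors. Hence $\U/M$ already surjects onto $H_1(\ccM(S,\u,\cP)^\U,\Z)$, and $M$ has infinite index directly, with no need to compare $M$ and $N$.
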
 

\begin{proof}There is a normal subgroup $N$ of $\PG(S_{0,4})$ of index a power $(m/2)^k$ such that the associated covering of $\PM(S_{0,4})$ 
has genus $\geq 1$. Then, $\U^\l:=r^{-1}(p^{-1}(N))$ is a normal finite index subgroup of $\U(S)^{[2]}$ of index $(m/2)^k$. Since $\U(S)^{[2]}$ 
contains all squares of symmetric multitwists, it follows that all ${m^k}$-powers of symmetric multitwists belong to $\U^\l$. 
Let us denote by $K$ the normal subgroup of $\U^\l$ generated by these elements and
let $\ccM(S,\u,\cP)^\l$ be the DM compactification of the level structure $\cM(S,\u,\cP)^\l$ over $\cM(S,\u,\cP)$ associated to $\U^\l$.
There is an epimorphism $\U^\l/K\twoheadrightarrow H_1(\ccM(S,\u,\cP)^\l,\Z)$ and, as we showed in the proof of Proposition~\ref{W_1 no trivial}, 
there holds $H^1(\ccM(S,\u,\cP)^\l,\Z)\otimes\Q\cong W_1H^1(\U^\l)\neq\{0\}$.
\end{proof}

For $\s\in C(S,\u,\cP)$, where $S$ is a closed surface, the stabilizer $\PU(S,\cP)_{\vec\s}$ of the oriented simplex $\vec\s$ for the action of $\PU(S,\cP)$ 
is described by the short exact sequence~(\ref{stabPHyp}):
\[1\to\prod_{\g\in\s_{ns}}\tau_\g^{2\Z}\times\prod_{\g\in\s_{s}}\tau_\g^{\Z}\to\PU(S,\cP)_{\vec\s}\sr{q_\s}{\to}\PU(S\ssm\s,\cP)\to 1.\]
Since powers of Dehn twists have weight $-2$ in homology, we then have:

\begin{lemma}\label{lemma:qpurityH} For a finite index subgroup $\U^\l_{\vec\s}$ of $\PU(S,\cP)_{\vec\s}$, the restriction map:
\begin{equation}\label{restriction}
W_1 H^1(q_\s(\U^\l_{\vec\s}),\Q)\sr{q_\s^\ast}{\to} W_1 H^1(\U^\l_{\vec\s},\Q)
\end{equation}
is an isomorphism.
\end{lemma}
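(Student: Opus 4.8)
The plan is to analyze the five-term exact sequence in low-degree cohomology (the inflation-restriction sequence) associated to the short exact sequence
\[1\to K\to\U_{\vec\s}\sr{q_\s}{\to} q_\s(\U_{\vec\s})\to 1,\]
where $K:=\ker(q_\s|_{\U_{\vec\s}})=\U_{\vec\s}\cap\bigl(\prod_{\g\in\s_{ns}}\tau_\g^{2\Z}\times\prod_{\g\in\s_{s}}\tau_\g^{\Z}\bigr)$ is a finitely generated free abelian group generated by powers of the Dehn twists $\tau_\g$, $\g\in\s$. The key input, already flagged in the paragraph preceding the lemma, is that a power of a Dehn twist has weight $-2$ on homology, equivalently weight $2$ on $H^1$ of the corresponding moduli stack; so $H^1(K,\Q)$, being the dual of $K\otimes\Q$ (a $\Q$-vector space spanned by classes dual to these twist powers), is pure of weight $2$, and in particular has trivial weight-$1$ part. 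One must of course check that this weight assertion survives restriction to the finite-index subgroup $\U_{\vec\s}$ and that the Hodge structures in the inflation-restriction sequence are the ones coming from the $K(\pi,1)$ moduli interpretations of Section~\ref{geoint}; this is where the geometric picture (the boundary stratum $\td\d_\s^\u$ and the torus-bundle description from Section~\ref{boundaryDMHyp}, together with the level-structure constructions) is needed to make "weight" meaningful and functorial.

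With this in hand the argument is short. The five-term exact sequence reads
\[0\to H^1(q_\s(\U_{\vec\s}),\Q)\sr{q_\s^\ast}{\to} H^1(\U_{\vec\s},\Q)\to H^1(K,\Q)^{q_\s(\U_{\vec\s})}\to H^2(q_\s(\U_{\vec\s}),\Q),\]
and it is a sequence of mixed Hodge structures. Taking weight-$\leq 1$ parts is exact (the weight filtration is strict for morphisms of MHS), so
\[0\to W_1H^1(q_\s(\U_{\vec\s}),\Q)\sr{q_\s^\ast}{\to} W_1H^1(\U_{\vec\s},\Q)\to W_1\bigl(H^1(K,\Q)^{q_\s(\U_{\vec\s})}\bigr).\]
Since $H^1(K,\Q)$ is pure of weight $2$, so is the subspace of coinvariants-dual invariants $H^1(K,\Q)^{q_\s(\U_{\vec\s})}$, whence $W_1H^1(K,\Q)^{q_\s(\U_{\vec\s})}=0$. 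Therefore $q_\s^\ast$ is an isomorphism onto $W_1H^1(\U_{\vec\s},\Q)$, which is the claim. (Alternatively, one can avoid spectral sequences: the Gysin/residue description of the boundary divisors shows directly that the classes in $H^1(\U_{\vec\s},\Q)$ not pulled back from $q_\s(\U_{\vec\s})$ are "logarithmic" classes of weight $2$, i.e.\ residues along the branches of $\td\d_\s^\u$, hence contribute nothing in weight $1$.)

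The main obstacle is purely bookkeeping: one must be careful that $K$ is genuinely free abelian generated by twist powers even after passing to the finite-index subgroup $\U_{\vec\s}$ (it is, since a subgroup of a finitely generated free abelian group is free abelian and the twist powers remain the relevant generators up to finite index), and one must make sure the weight-$2$ statement for $H^1(K)$ is the restriction of the ambient MHS rather than an unrelated structure — this follows because $K$ sits in $\U_{\vec\s}$ as (a finite-index subgroup of) the fundamental group of the torus fibers of the real-oriented-blowup/Weil-restriction picture of Section~\ref{boundaryDM}, and the local monodromy weight filtration around a normal crossing boundary divisor places the vanishing-cycle (twist) classes in weight $2$ on $H^1$. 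Granting the standard mixed Hodge theory of moduli of curves recalled earlier, no further difficulty arises.
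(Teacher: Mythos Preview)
Your argument is correct and is essentially a fleshed-out version of the paper's own proof, which consists only of the sentence preceding the lemma: ``Since powers of Dehn twists have weight $-2$ in homology, we then have:''. The paper leaves implicit exactly the inflation-restriction/five-term sequence reasoning you have written out, together with the strictness of the weight filtration; your elaboration of why $H^1(K,\Q)$ is pure of weight~$2$ via the moduli interpretation is the standard justification for that sentence.
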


\subsection{A counterexample to the hyperelliptic Putman-Wieland problem}\label{counterexamples}
Let us fix some notation. For $\g$ a nonseparating simple closed curve on $S$, let $S_\g:=S\ssm\g$, let $\ol{S}_\g$ be the closed surface 
obtained from $S_\g$ filling in the punctures and put $\{Q,R\}:=\ol{S}_\g\ssm S_\g$. 

We then denote by $\ol{\U}_{\vec\g}^\l$ the image of $\U^\l\cap\PU(S,\cP)_{\vec\g}$ in $\PU(\ol{S}_\g,\cP\cup\{Q\})$ by the composition of the 
natural epimorphism $q_\g\co\PU(S,\cP)_{\vec\g}\to\PU(S_\g,\cP)$ (cf.\ the short exact sequence~(\ref{stabPHyp})) with the natural epimorphism 
$\PU(S_\g,\cP)\to\PU(\ol{S}_\g,\cP\cup\{Q\})$ (cf.\ the last one of the short exact sequences~(\ref{forgetsympunctures})). 
There is a Birman short exact sequence:
\[1\to\pi_1(\ol{S}_\g\ssm\cP,Q)\to\PU(\ol{S}_\g,\cP\cup\{Q\})\sr{f_Q}{\to}\PU(\ol{S}_\g,\cP)\to 1.\]

Let $\ol{S}_\g^\l\to\ol{S}_\g$ be the covering of closed surfaces obtained filling in the punctures of the unramified covering 
$(\ol{S}_\g\ssm\cP)^\l\to\ol{S}_\g\ssm\cP$ associated to the subgroup $\pi_1(\ol{S}_\g\ssm\cP,Q)\cap\ol{\U}_{\vec\g}^\l$ of  
$\pi_1(\ol{S}_\g\ssm\cP,Q)$. Let then:
\[L\ol{\rho}_\g^\l\co  f_Q(\ol{\U}^\l_{\vec\g})\to\Sp(H^1(\ol{S}_\g^\l,\Q))\]
be the corresponding linear representation.

\begin{definition}\label{1-bounding}Let $\mathcal{D}_1(S,\u,\cP)$ be the normal subgroup of $\U(S,\cP)$ generated by all Dehn twists about
separating curves bounding a genus $1$ subsurface of $S$ carrying no marked points.
\end{definition}

We then have:

\begin{theorem}\label{No Hyp Putman}For $(S,\u,\cP)$ a marked hyperelliptic closed surface of genus $g\geq 3$, let $\U^\l$ be a finite index 
subgroup of $\PU(S,\cP)$ which is normalized by $\U(S,\vec\cP)$, contains $\mathcal{D}_1(S,\u,\cP)$ and is such that $W_1 H^1(\U^\l,\Q)\neq 0$.
Then, for any nonseparating simple closed curve $\g$ on $S$, the associated linear representation 
$L\ol{\rho}_\g^\l\co  f_Q(\ol{\U}^\l_{\vec\g})\to\Sp(H^1(\ol{S}_\g^\l,\Q))$ has a finite nontrivial orbit.
\end{theorem}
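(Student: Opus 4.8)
The plan is to realize the finite nontrivial orbit inside $H^1(S_{\ol{\U}_{\vec\g}},\Q)$ by combining the Hodge-theoretic exact sequence of Lemma~\ref{lemma:AH} with the cut-curve analysis encoded in Lemma~\ref{lemma:qpurityH} and the hyperelliptic Birman sequences of Theorem~\ref{Q}. First I would pass from $\U$ to its stabilizer $\U\cap\PU(S,\cP)_{\vec\g}$ of the oriented curve $\vec\g$, push it forward under $q_\g$ and then under $\PU(S_\g,\cP)\to\PU(\ol{S}_\g,\cP\cup\{Q\})$ to obtain the group $\ol{\U}_{\vec\g}$, exactly as in the definition preceding the statement. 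The point of the hypothesis $W_1H^1(\U,\Q)\neq 0$ is that, by Lemma~\ref{lemma:qpurityH}, the restriction $q_\g^\ast\co W_1H^1(q_\g(\U\cap\PU(S,\cP)_{\vec\g}),\Q)\to W_1H^1(\U\cap\PU(S,\cP)_{\vec\g},\Q)$ is an isomorphism, so after replacing $\U$ by a further finite-index subgroup we may assume $W_1H^1(q_\g(\U\cap\PU(S,\cP)_{\vec\g}),\Q)\neq 0$; and then, using the third sequence of Theorem~\ref{Q} (which has kernel $N_\circ$ generated by twists about symmetric separating curves, hence of weight $-2$ in $H^1$), the same nonvanishing descends to $W_1H^1(\ol{\U}_{\vec\g},\Q)\neq 0$. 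Thus $\ol{\U}_{\vec\g}$ is a finite-index subgroup of $\PU(\ol{S}_\g,\cP\cup\{Q\})$ with nonzero pure $H^1$.

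Next I would apply Lemma~\ref{lemma:AH} to $\ol{\U}_{\vec\g}$ and the Birman sequence $1\to\pi_1(\ol{S}_\g\ssm\cP,Q)\to\PU(\ol{S}_\g,\cP\cup\{Q\})\xrightarrow{f_Q}\PU(\ol{S}_\g,\cP)\to 1$: this yields a short exact sequence of polarized weight-one Hodge structures
\[
0\to W_1H^1(f_Q(\ol{\U}_{\vec\g}),\Q)\to W_1H^1(\ol{\U}_{\vec\g},\Q)\to H^1(S_{\ol{\U}_{\vec\g}},\Q)^{f_Q(\ol{\U}_{\vec\g})}\to 0.
\]
The strategy for producing a \emph{finite} orbit is then to show that the space of invariants $H^1(S_{\ol{\U}_{\vec\g}},\Q)^{f_Q(\ol{\U}_{\vec\g})}$ is nonzero; any nonzero invariant vector is by definition a finite (indeed singleton) orbit, hence nontrivial and finite, which is exactly what the theorem asserts. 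To get nonvanishing of the invariants I would argue by contradiction: if $H^1(S_{\ol{\U}_{\vec\g}},\Q)^{f_Q(\ol{\U}_{\vec\g})}=0$, then $W_1H^1(f_Q(\ol{\U}_{\vec\g}),\Q)\xrightarrow{\ \sim\ } W_1H^1(\ol{\U}_{\vec\g},\Q)$, i.e. the full nonzero pure $H^1$ of $\ol{\U}_{\vec\g}$ comes from the base $\PU(\ol{S}_\g,\cP)$. But $f_Q(\ol{\U}_{\vec\g})$ is a finite-index subgroup of $\PU(\ol{S}_\g,\cP)$, a pure hyperelliptic mapping class group of a closed hyperelliptic surface $\ol{S}_\g$ of genus $g-1\geq 2$; one then wants a contradiction from the structure of $\U$. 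This is where the hypotheses ``$\U$ is normalized by $\U(S,\vec\cP)$'' and ``$\U\supseteq\mathcal D_1(S,\u,\cP)$'' enter: the group $\mathcal D_1$ of twists about genus-one bounding curves surjects, under $q_\g$ composed with the puncture-filling map, onto a distinguished set of separating twists on $\ol{S}_\g$, and normality under $\U(S,\vec\cP)$ guarantees the image $\ol{\U}_{\vec\g}$ is itself normal of finite index; combined with the earlier computation $W_1H^1(\PU(S',\cP'),\Q)=0$ for closed hyperelliptic surfaces of genus $\geq 2$ (Proposition~\ref{first homology H}) applied after a further cutting, one derives that no nonzero pure class on $\ol{\U}_{\vec\g}$ can be entirely pulled back from the base, contradicting the assumption.

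The main obstacle I expect is precisely the last step: controlling how $W_1H^1$ behaves under the Birman projection $f_Q$ and ruling out that \emph{all} of the pure cohomology of $\ol{\U}_{\vec\g}$ is pulled back from $\PU(\ol{S}_\g,\cP)$. The clean way to handle this is to track where the nonzero class in $W_1H^1(\U,\Q)$ actually lives: by the construction in the proof of Proposition~\ref{W_1 no trivial} (and the remark following it) one may take the class supported on a genus-one ``building block'' of $S$, i.e. pulled back from a genus-one piece via the $\PG(S_{0,2g+2})\to\PG(S_{0,4})$ mechanism; after cutting along $\g$ this building block either survives intact inside $\ol{S}_\g$ — in which case the class descends to the base and one instead must use a \emph{different} genus-one block, available since $g\geq 3$ — or it is destroyed by the cut, in which case the class necessarily maps to the fiber part $H^1(S_{\ol{\U}_{\vec\g}},\Q)^{f_Q(\ol{\U}_{\vec\g})}$, giving the desired invariant directly. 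Making this dichotomy precise, using Proposition~\ref{Farey} to move $\g$ into standard position relative to the chosen building blocks, and checking that $\mathcal D_1\subseteq\U$ forces the relevant separating twists into $\ol{\U}_{\vec\g}$ so that the fiber action is genuinely through a finite quotient, is the technical heart of the argument.
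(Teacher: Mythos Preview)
Your overall strategy — push the nonzero class in $W_1H^1(\U,\Q)$ down to $\ol\U_{\vec\g}$ and then read off a nonzero invariant via Lemma~\ref{lemma:AH} — has a genuine gap at the very first move. You need the restriction map
\[
W_1H^1(\U,\Q)\;\longrightarrow\;W_1H^1\bigl(\U\cap\PU(S,\cP)_{\vec\g},\Q\bigr)
\]
to have nonzero image, and nothing in the setup gives you this for free. Your phrase ``after replacing $\U$ by a further finite-index subgroup'' does not help: the theorem fixes $\U$, and in any case passing to a smaller group enlarges $W_1H^1$ of the subgroup rather than controlling the restriction map from the original $\U$. Your fallback via ``building blocks'' from Proposition~\ref{W_1 no trivial} assumes the nonzero class is of a very special shape (pulled back from a genus-one piece), but the hypothesis is only $W_1H^1(\U,\Q)\neq 0$ for an \emph{arbitrary} $\U$ with the listed properties; you cannot assume you know where the class lives. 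The dichotomy in your last paragraph therefore never gets off the ground.

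The paper circumvents exactly this difficulty by arguing \emph{by contraposition}. It assumes that $L\rho_{\ol\U_{\vec\g}}$ has no finite nontrivial orbit, i.e.\ $H^1(S_{\ol\U_{\vec\g}},\Q)^{f_Q(\ol\U_{\vec\g})}=0$, and then proves $W_1H^1(\U,\Q)=0$. The point of this reversal is that the vanishing of invariants makes $f_Q^\ast$ an isomorphism (your sequence from Lemma~\ref{lemma:AH}), and this, combined with the iso from Lemma~\ref{lemma:qpurityH} and the weight-$(-2)$ kernel from Theorem~\ref{Q}, lets one factor both restriction maps $W_1H^1(\U,\Q)\to W_1H^1(\U_\g,\Q)$ and $W_1H^1(\U,\Q)\to W_1H^1(\U_\delta,\Q)$ through a common target $W_1H^1(r_\alpha(\U_\alpha),\Q)$, where $\alpha$ bounds a genus-one neighbourhood of $\g\cup\delta$ (this is where $\mathcal D_1\subset\U$ is actually used: it forces $N_\g\subset q_\g(\U_{\vec\g})$). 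Hence the kernels of all these restrictions coincide; Proposition~\ref{Farey} and the connectivity of $C_{\mathrm{ns}}(S,\u,\cP)$ (Corollary~\ref{hyp complex}) then force this common kernel to be zero via the equivariant spectral sequence. Now $\tau_\g$ centralizes $\U_\g$, so acts trivially on $W_1H^1(\U_\g,\Q)$, hence on $W_1H^1(\U,\Q)$; by Proposition~\ref{generators} the whole of $\U(S,\vec\cP)$ acts trivially; but then $W_1H^1(\U,\Q)=W_1H^1(\U,\Q)^{\U(S,\vec\cP)}=W_1H^1(\U(S,\vec\cP),\Q)=0$ by Proposition~\ref{first homology H}. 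This is the role of the normalization hypothesis — it gives an action of $\U(S,\vec\cP)$ on $W_1H^1(\U,\Q)$ with trivial coinvariants — not merely normality of the image $\ol\U_{\vec\g}$ as you suggest.
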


\begin{proof}The proof is by contraposition. So let us assume that, for a given finite index subgroup $\U^\l$ of $\PU(S,\cP)$, which satisfies 
the hypotheses of the theorem, and some nonseparating simple closed curve $\g$ on $S$, the linear representation 
$L\ol{\rho}_\g^\l\co  f_Q(\ol{\U}^\l_{\vec\g})\to\Sp(H^1(\ol{S}_\g^\l,\Q))$ has only infinite nontrivial orbits. 
Note that, since $\U^\l$ is normal in $\U(S,\vec\cP)$ and nonseparating  simple closed curve on $S$ form a single orbit under the latter group, 
this property then holds for all of them. We will show that this implies that $W_1 H^1(\U^\l)= 0$.

By Proposition~\ref{first homology H}, we have that $W_1H^1(\U^\l,\Q)^{\U(S,\vec\cP)}=W_1H^1(\U(S,\vec\cP),\Q)=0$. 
So it is enough to prove that our hypothesis implies that $\U(S,\vec\cP)$ acts trivially on $W_1 H^1(\U^\l)$. 
Since, by Proposition~\ref{generators}, the hyperelliptic mapping class group $\U(S,\vec\cP)$ is generated by 
symmetric nonseparating Dehn twists, we just need to show that, for any symmetric nonseparating simple closed curve $\g$ on $S$, the Dehn twist 
$\tau_\g$ acts trivially on $W_1 H^1(\U^\l)$. 

Note that $S_\g:=S\ssm\g$ is homeomorphic to $S_{g-1,2}$ with the two symmetric punctures, labeled
by $Q$ and $R$, which correspond to the two orientations of $\g$. The image $q_\g(\U^\l_{\vec\g})$ of $\U^\l_{\vec\g}$ 
under the map $q_\g\co\PU(S,\cP)_{\vec\g}\to\PU(S_\g,\cP)$  is a finite index subgroup and, by~(\ref{restriction}), there is an isomorphism:
\begin{equation}\label{iso1}
q_\g^\ast\co W_1H^1(\U^\l_{\vec\g},\Q)\sr{\sim}{\to} W_1H^1(q_\g(\U^\l_{\vec\g}),\Q).
\end{equation} 

The surface $\ol{S}_\g$ is obtained from $S_\g$ by filling in the two punctures bounded by $\g$, so that $\ol{S}_\g\cong S_{g-1}$. 
By the bottom short exact sequence~(\ref{forgetsympunctures}), there is an epimorphism $\phi_\g\co\PU(S_\g,\cP)\to\PU(\ol{S}_\g,\cP\cup\{Q\})$,
with kernel $N_\g$ the normal subgroup generated by the Dehn twists about symmetric separating simple closed curves on $S_\g$ bounding 
a disc which only contains the punctures labeled by $Q$ and $R$ and no marked points. 

Since $N_\g$ is contained in 
$q_\g(\PU(S,\cP)_{\vec\g}\cap\mathcal{D}_1(S,\u,\cP))$ and we assumed that $\mathcal{D}_1(S,\u,\cP)\subset\U^\l$, it follows that
$N_\g$ is contained in $q_\g(\U^\l_{\vec\g})$. There is then a short exact sequence:
\[1\to N_\g\to q_\g(\U^\l_{\vec\g})\sr{\phi_\g}{\to}\ol{\U}^\l_{\vec\g}\to 1.\]
Since the elements of $N_\g$ map to elements of weight $-2$ in $H_1(q_\g(\U^\l_{\vec\g}),\Q)$, it follows that $\phi_\g$ induces an isomorphism:
\begin{equation}\label{iso2}
\phi_\g^\ast\co W_1H^1(\ol{\U}^\l_{\vec\g},\Q)\sr{\sim}{\to} W_1H^1(q_\g(\U^\l_{\vec\g}),\Q).
\end{equation} 

Let us now consider the short exact sequence:
\[1\to\pi_1(\ol{S}_\g\ssm\cP,Q)\to\PU(\ol{S}_\g,\cP\cup\{Q\})\sr{f_Q}{\to}\PU(\ol{S}_\g,\cP)\to 1.\]
From the hypothesis that the linear representation $L\ol{\rho}_\g^\l\co  f_Q(\ol{\U}^\l_{\vec\g})\to\Sp(H^1(\ol{S}_\g^\l,\Q))$ 
has only infinite nontrivial orbits and the short exact sequence~(\ref{purexact}), it follows that the epimorphism $f_Q$ induces an isomorphism: 
\begin{equation}\label{iso3}
f_Q^\ast\co W_1H^1(f_Q(\ol{\U}^\l_{\vec\g}),\Q)\sr{\sim}{\to} W_1H^1(\ol{\U}^\l_{\vec\g},\Q).
\end{equation} 

Since $(\U^\l_\g:\U^\l_{\vec\g})\leq 2$, the inclusion $\U^\l_{\vec\g}\subseteq\U^\l_\g$ induces a monomorphism on cohomology
$W_1H^1(\U^\l_{\g},\Q)\hookra W_1H^1(\U^\l_{\vec\g},\Q)$. Composing with the isomorphism~(\ref{iso1}) 
and the inverses of the isomorphisms~(\ref{iso2}) and~(\ref{iso3}), we get a monomorphism:
\begin{equation}\label{iso4}
W_1H^1(\U^\l_{\g},\Q)\hookra W_1H^1(f_Q(\ol{\U}^\l_{\vec\g}),\Q).
\end{equation} 

Let $\delta$ be a simple closed curve on $S$ with the property that it intersects $\g$ transversally in a single point and let
$\alpha$ be the boundary of a tubular neighborhood of $\delta\cup\g$ in $S$. Then, $\alpha$ is a separating simple closed curve 
on $S$ and we let $S_\alpha$ be the component of $S\ssm\alpha$ of genus $>1$. The natural embedding $S_\alpha\subset\ol{S}_\g$
is homotopic to the embedding $S_\alpha\subset\ol{S}_\alpha$, where $\ol{S}_\alpha$ is the closed surface obtained from $S_\alpha$ filling in the hole 
bounded by $\alpha$ with a point. Therefore, we can identify $\PU(\ol{S}_\alpha,\cP)$ with $\PU(\ol{S}_\g,\cP)$. Moreover, by the definition of
pure hyperelliptic mapping class group of a marked hyperbolic hyperelliptic surface, the group $\PU(S_\alpha,\cP)$ identifies with the 
group $\PU(\ol{S}_\alpha,\cP)$. 

By the short exact sequence~(\ref{stabPHyp}), there is a natural epimorphism onto $\PU(S_\alpha,\cP)$ from the stabilizer 
$\PU(S,\cP)_\alpha=\PU(S,\cP)_{\vec\alpha}$ of the isotopy class of $\alpha$. Let then: 
\[r_\alpha\co\PU(S,\cP)_\alpha\to\PU(\ol{S}_\g,\cP)\]
be the epimorphism obtained from this epimorphism and the identification of $\PU(S_\alpha,\cP)$ with $\PU(\ol{S}_\alpha,\cP)$ and of 
$\PU(\ol{S}_\alpha,\cP)$ with $\PU(\ol{S}_\g,\cP)$.
If $\U^\l_\alpha$ is the stabilizer in $\U^\l$ of the isotopy class of $\alpha$, its image $r_\alpha(\U^\l_\alpha)$ is then a finite index
subgroup of $f_Q(\ol{\U}^\l_{\vec\g})$ and
restriction induces a monomorphism $W_1H^1(f_Q(\ol{\U}^\l_{\vec\g}),\Q)\hookra W_1H^1(r_\alpha(\U^\l_\alpha),\Q)$, which,
composed with the monomorphism~(\ref{iso4}), gives the natural monomorphism:
\[W_1H^1(\U^\l_{\g},\Q)\hookra W_1H^1(r_\alpha(\U^\l_\alpha),\Q).\]
Since we may interchange $\g$ and  $\delta$, we also obtain a natural monomorphism:
\[W_1H^1(\U^\l_{\delta},\Q)\hookra W_1H^1(r_\alpha(\U^\l_\alpha),\Q).\]

The compositions
\[\begin{array}{ll}
&W_1H^1(\U^\l,\Q)\to W_1H^1(\U^\l_\g,\Q)\hookra W_1H^1(r_\alpha(\U^\l_\alpha),\Q)\\
\mbox{and}\hspace{2cm}&\\
&W_1H^1(\U^\l,\Q)\to W_1H^1(\U^\l_\delta,\Q)\hookra W_1H^1(r_\alpha(\U^\l_\alpha),\Q)
\end{array}\]
then define the same map. Therefore, $W_1H^1(\U^\l,\Q)\to W_1H^1(\U^\l_\g,\Q)$ and
$W_1H^1(\U^\l,\Q)\to W_1H^1(\U^\l_\delta,\Q)$ have the same kernel. Since, by Proposition~\ref{Farey}, 
any two symmetric nonseparating closed curves on $S$ can be connected by a sequence of such curves whose successive terms meet transversally 
in a single point, we see that this kernel is independent of $\g$. 

By Corollary~\ref{hyp complex}, the geometric realization $|C_\mathrm{ns}(S,\u,\cP)|$ of the complex of 
symmetric nonseparating curves is connected. Hence, the low terms exact sequence associated to the first spectral sequence of equivariant 
cohomology yields the exact sequence
\[0\to H^1(|C_\mathrm{ns}(S,\u,\cP)|/\U^\l,\Q)\to H^1(\U^\l,\Q)\to \bigoplus_{\g}H^1(\U^\l_\g,\Q),\]
where $\g$ runs over a set of representatives of the $\U^\l$-orbits of isotopy classes of nonseparating simple
closed curves. 

Since the natural monomorphism $H^1(|C_\mathrm{ns}(S,\u,\cP)|/\U^\l,\Q)\hookra H^1(\U^\l,\Q)$
has image contained in the weight zero subspace of $H^1(\U^\l,\Q)$, we then get the exact sequence:
\[0\to W_1H^1(\U^\l,\Q)\to\bigoplus_\g W_1H^1(\U^\l_\g,\Q).\]
Above we proved that, for all symmetric nonseparating simple closed curves $\g$, the maps 
$W_1 H^1(\U^\l,\Q)\to W_1 H^1(\U^\l_\g,\Q)$ have the same kernel. It follows that $W_1 H^1(\U^\l,\Q)\to W_1 H^1(\U^\l_\g,\Q)$ 
is injective. Since the Dehn twist $\tau_\g$ centralizes the subgroup $\U^\l_\g$ of $\U(S,\cP)$ and then acts trivially on $W_1 H^1(\U^\l_\g,\Q)$, 
it does so on $W_1 H^1(\U^\l,\Q)$ too, which completes the proof of the theorem.
\end{proof}

The following result shows that, at least for $\sharp\,\cP\geq 4$, Theorem~\ref{No Hyp Putman} is not empty:

\begin{theorem}\label{Noempty}For $(S,\u,\cP)$ a marked hyperelliptic closed surface of genus $g\geq 2$ with $\sharp\,\cP\geq 4$,
there is a finite index subgroup $\U^\l$ of $\PU(S,\cP)$ which is normalized by $\U(S,\vec\cP)$, contains $\mathcal{D}_1(S,\u,\cP)$ 
and is such that $W_1 H^1(\U^\l,\Q)\neq 0$.
\end{theorem}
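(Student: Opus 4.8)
The goal is to construct a level $\U$ of $\PU(S,\cP)$ with the three stated properties and with $W_1H^1(\U,\Q)\neq 0$. First I would isolate the one genuinely restrictive requirement. Restriction along a finite-index inclusion of groups is injective on $W_1H^1$ (transfer), so passing to a finite-index sublevel never destroys the non-vanishing of $W_1H^1$; hence the conditions "$\U\subseteq\PU(S,\cP)$" and "$\U$ normalized by $\U(S,\vec\cP)$" can be imposed for free at the end, by intersecting a given level with $\PU(S,\cP)$ and with its finitely many $\U(S,\vec\cP)$-conjugates. The level produced by Proposition~\ref{W_1 no trivial}, however, does \emph{not} contain $\mathcal{D}_1(S,\u,\cP)$, and one cannot simply enlarge it, since enlarging can kill $W_1H^1$. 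So the heart of the matter is to produce a finite-index $\U_1\le\PU(S,\cP)$ with $W_1H^1(\U_1,\Q)\neq0$ coming from a geometric source on which the twists generating $\mathcal{D}_1(S,\u,\cP)$ act trivially. Since those twists are supported on subsurfaces of $S$ carrying none of the marked points, the source must involve the marked points themselves, and since a configuration of at most three points on the genus-$0$ quotient $S_{/\u}$ carries no moduli, four marked points are needed: this is the role of the hypothesis $\sharp\cP\ge4$.

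Concretely, fix four of the marked points and attach to $(C,\iota,\cP)$ an auxiliary curve functorially built from $C$ and these four points — for instance a cyclic covering of $C$ branched along the four marked points together with a controlled set of Weierstrass points, chosen so that the resulting variation is non-isotrivial. This is defined over a suitable level $\U_0$ of $\PU(S,\cP)$ on which the relevant root of a line bundle is coherent and which avoids the loci where the construction degenerates. The induced period map to a moduli of polarized abelian varieties — or, after cutting out its "elliptic" part, to a modular curve — pulled back along a deep enough congruence level, yields a level $\U_1\le\U_0$ whose smooth compactification $\ccM(S,\u,\cP)^{\U_1}$ dominates a curve of positive genus; since a non-constant morphism to a curve pulls $H^1$ back injectively, $W_1H^1(\U_1,\Q)\ne0$, which is precisely the mechanism behind Lemma~\ref{lemma:AH}. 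One then sets $\U:=\U_1'\cdot\mathcal{D}_1(S,\u,\cP)$, where $\U_1'$ is the intersection of $\U_1\cap\PU(S,\cP)$ with its $\U(S,\vec\cP)$-conjugates; this is of finite index in $\PU(S,\cP)$, is normalized by $\U(S,\vec\cP)$, and contains $\mathcal{D}_1(S,\u,\cP)$.

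The main obstacle is to verify that $W_1H^1(\U,\Q)$ remains nonzero after the enlargement by $\mathcal{D}_1(S,\u,\cP)$. By transfer one has $W_1H^1(\U,\Q)=W_1H^1(\U_1',\Q)^{\,\mathcal{D}_1(S,\u,\cP)}$, so it is enough that the positive-genus class produced above is $\mathcal{D}_1(S,\u,\cP)$-invariant; this holds because each twist generating $\mathcal{D}_1(S,\u,\cP)$ fixes the four chosen marked points and is supported away from them, hence acts as the identity on the auxiliary family and fixes the pulled-back $1$-form (here one also uses, as throughout Section~\ref{representations}, that powers of Dehn twists have weight $-2$). Turning this sketch into a proof requires the genuinely delicate part: controlling the degeneracy loci of the auxiliary family — where marked points collide with one another or with a branch point, and where curves degenerate along the Deligne--Mumford boundary — so that the covers in sight are connected of the expected degree, and making the whole construction equivariant for $\U(S,\vec\cP)$ so that the final symmetrization does not shrink $\U_1$ to a level with vanishing $W_1H^1$.
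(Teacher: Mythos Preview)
Your high-level diagnosis is correct: the finite-index and normalization requirements are harmless, and the only real constraint is to manufacture a level whose nonzero $W_1H^1$ survives enlargement by $\mathcal{D}_1(S,\u,\cP)$. But the paper's construction is considerably more direct than your auxiliary-family plan, and your argument for $\mathcal{D}_1$-invariance has a genuine gap.

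The paper does not build a cyclic cover of $C$ at all. Instead, it passes to the \emph{quotient} $S_{/\u}$. After puncturing $S$ at the four chosen marked points together with their $\u$-images, the resulting hyperelliptic surface $S'$ satisfies $\PU(S')\cong\PG(S'_{/\u},\cB_\u')\cong\PG(S_{0,2g+6})$ by Birman--Hilden. Forgetting the $2g+2$ branch points yields an epimorphism $f_{\cB_\u'}\co\PU(S')\to\PG(S_{0,4})$; one checks from Theorem~\ref{Q} that $N_\circ\subset\ker f_{\cB_\u'}$, so this descends to an explicit epimorphism $\bar f_{\cB_\u'}\co\PU(S,\cP)\to\PG(S_{0,4})$. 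Now the containment $\mathcal{D}_1(S,\u,\cP)\subset\ker\bar f_{\cB_\u'}$ is a one-line topological observation: a symmetric separating curve on $S$ bounding an unmarked genus~$1$ subsurface maps under $p_\u$ to a curve on $S_{/\u}$ bounding a disc with exactly three branch points and no punctures, hence becomes null-homotopic once the branch locus is forgotten. One then pulls back a positive-genus level $\G\le\PG(S_{0,4})$, takes the normal core in $\U(S,\vec\cP)$, and is done; $\bar f_{\cB_\u'}^\ast$ is injective on $W_1H^1$ because the underlying morphism of moduli stacks has connected fibres, and the passage through $\PU(S')$ is an open embedding (hence a $W_1H^1$-isomorphism).

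Your proposal, by contrast, tries to build a family \emph{over} $C$ (a cyclic cover branched at the marked points and some Weierstrass points) and then to argue that the $\mathcal{D}_1$-twists act trivially on its periods. The sentence ``each twist generating $\mathcal{D}_1(S,\u,\cP)$ \ldots\ is supported away from [the marked points], hence acts as the identity on the auxiliary family'' is where the argument breaks. The support of a Dehn twist as a diffeomorphism of $S$ has nothing to do with its action on moduli: $\tau_\g$ changes the complex structure of $C$ everywhere, so it moves the base point and hence the fibre of your auxiliary family. What you would need is that the induced monodromy on the \emph{period} of the cover is trivial, and nothing in your setup forces that. Indeed, a cyclic cover of $C$ branched along $\cP$ still sees the full genus of $C$, and its Jacobian will in general have nontrivial monodromy along genus-$1$ degenerations of $C$. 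To make your approach work you would have to choose the cover so that its variation factors through something insensitive to the hyperelliptic branch locus---which is exactly what the paper's map to $\PG(S_{0,4})$ accomplishes in one stroke, without ever constructing a family.
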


\begin{proof}For $\cP\subset\cP'$, there is an epimorphism $f_{\cP'\ssm\cP}\co\PU(S,\cP')\to\PU(S,\cP)$, induced by
forgetting the points $\cP'\ssm\cP$, and so a monomorphism
$W_1 H^1(\U^\l,\Q)\hookra W_1 H^1(f_{\cP'\ssm\cP}^{-1}(\U^\l),\Q)$. Therefore,  it is not restrictive to assume that $\sharp\,\cP=4$.

Let then $\{Q_i,R_i\}$, for $i=1,\ldots,4$, be four distinct symmetric pairs of points on $S$, put $\cP:=\{Q_1,\ldots,Q_4\}$ and let
$S':=S\ssm\cup_{i=1}^4\{Q_i,R_i\}$. By Theorem~\ref{Q}, there is a natural short exact sequence:
\[1\to N_\circ\to\PU(S')\sr{f_\circ}{\to}\PU(S,\cP)\to 1,\]
where $N_\circ$ is the normal subgroup generated by the Dehn twists about symmetric separating simple closed curves on $S'$
bounding a disc which only contains the punctures labeled by a pair of points $\{Q_i,R_i\}$, for $i=1,\ldots,4$.

By the short exact sequence~(\ref{PHypEx}), there is also a natural isomorphism:
\[\PU(S')\cong\PG(S'_{/\u},\cB_\u')\cong\PG(S_{0,2g+6}),\]
where $\cB_\u'$ is the branch locus of the quotient map $S'\to S_{/\u}'$ (consisting of $2g+2$ points). 

There is then a natural epimorphism (induced by forgetting the branch locus $\cB_\u'$):
\[f_{\cB_\u'}\co\PU(S')\to\PG(S'_{/\u})\cong\PG(S_{0,4}).\]

Note that the kernel of $f_{\cB_\u'}$ contains $N_\circ$. In fact, the isomorphism $\PU(S')\cong\PG(S'_{/\u},\cB_\u')$ maps a Dehn twist about 
a symmetric separating simple closed curve on $S'$, bounding a disc which only contains the punctures labeled by the pair $\{Q_i,R_i\}$ from $S$, 
to a Dehn twist about a simple closed curve on $S'_{/\u}$ bounding a disc which only contains the puncture of $S'_{/\u}$ in the image of $\{Q_i,R_i\}$ 
and some point of the branch locus $\cB_\u'$. 

Therefore, the epimorphism $f_{\cB_\u'}$ factors through a natural epimorphism:
\[\bar f_{\cB_\u'}\co\PU(S,\cP)\to\PG(S'_{/\u})\cong\PG(S_{0,4}).\]

Note also that the above isomorphism $\PU(S')\cong\PG(S'_{/\u},\cB_\u')$ maps a Dehn twist about a separating simple closed curve bounding a genus $1$ 
unpunctured subsurface of $S'$ to a Dehn twist about a simple closed curve on $S'_{/\u}$ bounding an unpunctured disc containing three points of the
branch locus $\cB_\u'$. Therefore, the subgroup $\mathcal{D}_1(S',\u)$ is in the kernel of $f_{\cB_\u'}$ which implies that $\mathcal{D}_1(S,\u,\cP)$ 
is in the kernel of $\bar f_{\cB_\u'}$.

Let $\G^\l$ be a subgroup of $\PG(S'_{/\u})\cong\PG(S_{0,4})$ such that the associated level structure $\mathrm{P}\cM(S'_{/\u})^\l$ is a curve
of genus $\geq 1$ (so that $W_1 H^1(\G^\l,\Q)\neq 0$). Since $\U(S,\vec\cP)\cap\bar f_{\cB_\u'}^{-1}(\G^\l)$ is a subgroup of finite index in $\U(S,\vec\cP)$, 
it contains a subgroup $\U^\l$ which is maximal for the property of being normal and of finite index in $\U(S,\vec\cP)$.
Since $\mathcal{D}_1(S,\u,\cP)$ is a normal subgroup of $\U(S,\vec\cP)$ 
contained in $\bar f_{\cB_\u'}^{-1}(\G^\l)$, the subgroup $\U^\l$ contains $\mathcal{D}_1(S,\u,\cP)$.

As we observed in Section~\ref{HypTopType}, the isomorphism $\PU(S')\cong\PG(S'_{/\u},\cB_\u')$ is induced by an isomorphism~(\ref{gerbes}) 
$\cM(S',\u)\cong\cM(S'_{/\u},\cB_\u')$ of moduli stacks. Similarly, the epimorphism $f_{\cB_\u'}$ is induced by the smooth morphism with
connected fibers $\cM(S'_{/\u},\cB_\u')\to\cM(S'_{/\u})$. 

By the proof of Theorem~\ref{Q}, the epimorphism $f_\circ\co f_\circ^{-1}(\U^\l)\to\U^\l$ is induced by an open embedding of 
level structures $\cM(S',\u)^{\l'}\hookra\cM(S,\u,\cP)^{\l}$, where we put $\U^{\l'}:=f_\circ^{-1}(\U^\l)$. So, the epimorphism $f_\circ$ induces
an isomorphism $f_\circ^\ast\co W_1 H^1(\U^\l,\Q)\sr{\sim}{\to}W_1 H^1(f_\circ^{-1}(\U^\l),\Q)$. 
We conclude that there is a monomorphism 
$(f_\circ^\ast)^{-1}\circ f_{\cB_\u'}^\ast\co W_1 H^1(\G^\l,\Q)\hookra W_1 H^1(\U^\l,\Q)$ and, hence, $W_1 H^1(\U^\l,\Q)\neq 0$.
\end{proof}


\begin{corollary}\label{NoHypPutWie}With the notations of Section~\ref{HypProblem}, for $(S,\u,\cP)$ a marked hyperelliptic closed surface of genus 
$g\geq 2$ with $\sharp\,\cP\geq 4$ and a point $Q\in S\ssm\cP$, there is a finite index subgroup $\U^\l$ of the hyperelliptic mapping class group
$\U(S,\cP,Q)$ such that the associated linear representation $L\rho^\l\co  f_Q(\U^\l)\to\Sp(H^1(S^\l,\Q))$ has a finite nontrivial orbit.
\end{corollary}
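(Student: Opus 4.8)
The plan is to deduce the corollary from Theorem~\ref{No Hyp Putman} and Theorem~\ref{Noempty} by raising the genus by one. By its construction in Section~\ref{counterexamples}, the linear representation $L\rho_{\ol{\U}_{\vec\g}}$ attached to a symmetric nonseparating curve $\g$ on a marked hyperelliptic \emph{closed} surface of genus $G$ is nothing but the representation $L\rho_\U$ of Section~\ref{HypProblem} associated with the finite index subgroup $\ol{\U}_{\vec\g}$ of $\U(\ol{S}_\g,\cP,Q)$, where $\ol{S}_\g$ is the closed surface of genus $G-1$ obtained by collapsing $\g$ and filling in the two resulting punctures, and $Q$ is one of those two points. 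Since, for $G\geq 3$ and $\sharp\,\cP\geq 4$, Theorem~\ref{Noempty} provides a subgroup of the pure hyperelliptic mapping class group satisfying the hypotheses of Theorem~\ref{No Hyp Putman}, the latter theorem then produces the desired counterexample on \emph{every} marked hyperelliptic closed surface of genus $G-1\geq 2$ with at least four marked points --- once we know that the data $(S,\u,\cP,Q)$ of the corollary can be realized in the form $(\td S_{\td\g},\td\u,\cP,Q)$ with $\ol{\td S}_{\td\g}\cong S$.

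So the steps are as follows. We may assume $Q\notin\cW_\cP(S)$ and $\u(Q)\notin\cP$, since moving $Q$ within the connected surface $S\ssm\cP$ changes neither $\U(S,\cP,Q)$ nor the construction of Section~\ref{HypProblem} up to isomorphism. Put $R:=\u(Q)$ and $S^\circ:=S\ssm\{Q,R\}$, and glue to $S^\circ$ a handle joining the two punctures $Q$ and $R$, equivariantly with respect to $\u$ (which exchanges them). This yields a closed hyperelliptic surface $(\td S,\td\u)$ of genus $g+1\geq 3$ carrying the core curve $\td\g$ of the handle, which is a symmetric nonseparating simple closed curve, and such that filling in the two ends of $\td\g$ recovers $(S,\u,\cP)$ together with the point $Q$; i.e.\ $\ol{\td S}_{\td\g}\cong S$ compatibly with $\u$, $\cP$ and $Q$. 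Keeping $\cP$ off the handle, $(\td S,\td\u,\cP)$ is a marked hyperelliptic closed surface of genus $g+1$ with $\sharp\,\cP\geq 4$.

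Now apply Theorem~\ref{Noempty} to $(\td S,\td\u,\cP)$ to get a finite index subgroup $\U'$ of $\PU(\td S,\cP)$ that is normalized by $\U(\td S,\vec\cP)$, contains $\mathcal{D}_1(\td S,\td\u,\cP)$, and satisfies $W_1H^1(\U',\Q)\neq 0$; then apply Theorem~\ref{No Hyp Putman} to $(\td S,\td\u,\cP)$, the subgroup $\U'$ and the symmetric nonseparating curve $\td\g$, obtaining that the representation $L\rho_{\ol{\U'}_{\vec{\td\g}}}$ of $f_Q(\ol{\U'}_{\vec{\td\g}})$ on $H^1(\td S_{\ol{\U'}_{\vec{\td\g}}},\Q)$ has a finite nontrivial orbit. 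By its definition in Section~\ref{counterexamples}, $\ol{\U'}_{\vec{\td\g}}$ is a finite index subgroup of $\PU(\ol{\td S}_{\td\g},\cP\cup\{Q\})=\PU(S,\cP\cup\{Q\})$, hence of $\U(S,\cP,Q)$; and, under the identification $\ol{\td S}_{\td\g}=S$, unwinding the definitions shows that $\Pi_{\ol{\U'}_{\vec{\td\g}}}$, the covering $\td S_{\ol{\U'}_{\vec{\td\g}}}\to S$, and the map $f_Q$ coincide with the data $\Pi_\U$, $S_\U\to S$, $f_Q$ of Section~\ref{HypProblem} for $\U:=\ol{\U'}_{\vec{\td\g}}$. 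Hence $L\rho_{\ol{\U'}_{\vec{\td\g}}}=L\rho_\U$, and $\U$ is the required finite index subgroup of $\U(S,\cP,Q)$.

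Because Theorems~\ref{No Hyp Putman} and~\ref{Noempty} carry all the real content, the corollary is essentially a repackaging. The only step that needs genuine care --- and the point I expect to be the sole obstacle --- is the genus-raising realization: one must check that the equivariant handle-gluing produces a bona fide hyperelliptic surface (so that $\td\u$ is a hyperelliptic involution and $\td\g$ a symmetric nonseparating curve) and that collapsing $\td\g$ recovers $(S,\u,\cP,Q)$ on the nose, so that the hyperelliptic Birman short exact sequence and the representation of Section~\ref{counterexamples} for $(\td S,\td\u,\cP)$ get identified with those of Section~\ref{HypProblem} for $(S,\u,\cP,Q)$.
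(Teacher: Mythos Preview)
Your argument is correct and is precisely the deduction the paper intends (the corollary is stated without proof, as an immediate consequence of Theorems~\ref{No Hyp Putman} and~\ref{Noempty}). You have correctly identified that the representation $L\rho_{\ol{\U}_{\vec\g}}$ of Section~\ref{counterexamples}, built from a genus $G\geq 3$ surface and a symmetric nonseparating curve $\g$, is literally an instance of the representation $L\rho_\U$ of Section~\ref{HypProblem} on the genus $G-1$ surface $\ol{S}_\g$; the genus-raising step via an equivariant handle attachment along the symmetric pair $\{Q,\u(Q)\}$ is the right way to realize an arbitrary $(S,\u,\cP,Q)$ of genus $g\geq 2$ as such an $\ol{\td S}_{\td\g}$, and your verification that $\td\u$ extends to a hyperelliptic involution (acquiring the two extra Weierstrass points on the core $\td\g$) is the only point requiring care.
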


\begin{remark}\label{explicit}
Based on local monodromy computations 
(cf.\ Theorem~2.2 in \cite{B2}), possible candidates for $\U^\l$ are the finite index subgroups defined by the intersection of $\PU(S,\cP)$
with the finite index subgroups $\G^{w(4,3)}$ and $\G^{w(4,6)}$ of $\PG(S,\cP)$ defined in Section~2 of \cite{B2}.
\end{remark}

\subsection{A counterexample to the Putman-Wieland conjecture for $g(S)=2$}
In particular, we get  a counterexample to the $n$-punctured ($n\geq 4$), genus $2$ case of the Putman-Wieland 
conjecture\footnote{After completing a draft of this paper, I learned that Markovi\'c (cf.\ \cite{Markovic}), 
with a different approach, found a counterexample to the Putman-Wieland conjecture in genus $2$ for $\sharp\,\cP\geq 0$.}:

\begin{corollary}\label{NoPutWie2}For $(S,\cP)$ a marked closed surface of genus $2$ with $\sharp\,\cP\geq 4$ and $Q\in S\ssm\cP$, 
there is a finite index subgroup $\G^\l$ of the mapping class group $\G(S\ssm\cP,Q)$ such that the associated linear representation
$L\rho^\l\co  f_Q(\G^\l)\to\Sp(H^1(S^\l,\Q))$ has a finite nontrivial orbit.
\end{corollary}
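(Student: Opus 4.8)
The plan is to reduce the statement to the hyperelliptic case, which has already been settled in Corollary~\ref{NoHypPutWie}. The essential point is the one recalled in the introduction: a closed surface of genus $2$ carries, up to isotopy, a unique hyperelliptic involution $\u$, and the image of $\u$ in $\G(S)$ is central, so that $\G(S)=\U(S)$. Taking preimages under the natural epimorphism $\G(S,\cP\cup\{Q\})\to\G(S)$, we obtain $\G(S,\cP\cup\{Q\})=\U(S,\cP\cup\{Q\})$, and, passing to the stabilizers of the distinguished point $Q$, a chain of identifications $\G(S\ssm\cP,Q)=\G(S,\cP,Q)=\U(S,\cP,Q)$, where the first equality is the standard reinterpretation of the punctures of $S\ssm\cP$ as setwise-preserved marked points of $S$.

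Under this identification, the Birman short exact sequence $1\to\pi_1(S\ssm\cP,Q)\to\G(S\ssm\cP,Q)\to\G(S,\cP)\to 1$ is literally the hyperelliptic Birman sequence $1\to\pi_1(S\ssm\cP,Q)\to\U(S,\cP,Q)\to\U(S,\cP)\to 1$ of~(\ref{BirmanExact}). Consequently, for every finite index subgroup $\G$ of $\G(S\ssm\cP,Q)$, the data defining the (hyperelliptic) Putman--Wieland problem coincide: the subgroup $\Pi_\G=\pi_1(S\ssm\cP,Q)\cap\G$, the associated unramified covering $(S\ssm\cP)_\G\to S\ssm\cP$ and its compactification $S_\G\to S$, and the monodromy representation $L\rho_\G\co f_Q(\G)\to\Sp(H^1(S_\G,\Q))$ are all the same whether $\G$ is regarded as a subgroup of $\G(S\ssm\cP,Q)$ or of $\U(S,\cP,Q)$.

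It therefore suffices to apply Corollary~\ref{NoHypPutWie} to the marked hyperelliptic closed surface $(S,\u,\cP)$ of genus $2$ with $\sharp\,\cP\geq 4$ and the point $Q\in S\ssm\cP$: it produces a finite index subgroup $\U$ of $\U(S,\cP,Q)=\G(S\ssm\cP,Q)$ for which the associated linear representation $L\rho_\U$ has a finite nontrivial orbit, and we take $\G:=\U$.

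All the genuine content is in Corollary~\ref{NoHypPutWie} (which rests on Theorem~\ref{No Hyp Putman} and Theorem~\ref{Noempty}); the reduction above is purely formal once the identification $\G(S)=\U(S)$ in genus $2$ is in hand, so there is no real obstacle. The only point requiring a little care is the bookkeeping of the various marked-point conventions --- the unordered set $\cP$, the distinguished point $Q$, and the passage between punctures of $S\ssm\cP$ and marked points of $S$ --- which is routine.
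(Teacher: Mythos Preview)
Your proposal is correct and is exactly the argument the paper has in mind: the corollary is stated without proof, as an immediate consequence of Corollary~\ref{NoHypPutWie} via the genus~$2$ identity $\G(S)=\U(S)$ (and hence $\G(S,\cP,Q)=\U(S,\cP,Q)$), which you spell out carefully. There is nothing to add.
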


\section{Profinite hyperelliptic mapping class groups}\label{profinite}
\subsection{Profinite and congruence topologies on mapping class groups}
The \emph{profinite topology} on a group $G$ is defined taking for basis of open subsets the cosets of all normal subgroups of finite index.
In particular, every finite index subgroup $H$ of $G$ is an open subset for the profinite topology and we write $H<_o G$.
The \emph{profinite completion $\wh{G}$ of $G$} is the completion of $G$ with respect to the profinite topology. 
There is a natural isomorphism of topological groups $\wh{G}\cong\varprojlim_{N\lhd_o G}G/N$, where each finite quotient $G/N$ is endowed
with the discrete topology.

More generally, every cofiltered system $\{N^\l\}_{\l\in\L}$ of normal finite index subgroups of $G$ defines a topology on $G$ as above and
the \emph{pro-$\L$ completion} $\wh{G}^\L$ is naturally isomorphic to the inverse limit $\varprojlim_{\l\in\L}G/N^\l$.

For instance, on the mapping class group $\G(S,\cP)$, besides the profinite topology, a natural topology is defined in the following way. 

For a fixed $Q\in S\ssm\cP$, there is a natural faithful representation:
\[\G(S,\cP)\hookra\Out(\pi_1(S\ssm\cP,Q)).\]
We then associate to every $\G(S,\cP,Q)$-invariant finite index subgroup $K$ of $\pi_1(S\ssm\cP,Q)$ the kernel $\G^K$ 
of the induced representation $\G(S,\cP)\to\Out(\pi_1(S\ssm\cP,Q)/K)$. This is a normal finite index subgroup of $\G(S,\cP)$ called the 
\emph{geometric level associated to $K$}. 

Since, in a finitely generated group, characteristic finite index subgroups are cofinal in the system
of all finite index subgroups, the set of all geometric levels $\{\G^K\}$, ordered by inclusion, is a cofiltered system. The associated topology on 
the mapping class group $\G(S,\cP)$ is then called the \emph{congruence topology} and the associated completion $\kG(S,\cP)$ the 
\emph{congruence completion} or also the \emph{procongruence mapping class group}.

We define the congruence topology on a subgroup $\G$ of $\G(S,\cP)$ to be the topology induced by the congruence topology of $\G(S,\cP)$. 
The completion $\kG$ with respect to this topology is the \emph{congruence completion} of $\G$ and identifies with a subgroup of $\kG(S,\cP)$.
We call the congruence completions $\kU(S,\cP)$, $\kU(S,\vec\cP)$ and $\kPU(S,\cP)$ of $\U(S,\cP)$, $\U(S,\vec\cP)$ and $\PU(S,\cP)$ also
\emph{procongruence (pure) hyperelliptic mapping class groups}.

\subsection{The congruence subgroup problem}
By a classical result of Grossman (cf.\ \cite{Grossman}), we know that the natural homomorphism $\G(S,\cP)\to\Out(\hp_1(S\ssm\cP,Q))$
is injective, so that the natural homomorphism with dense image $\G(S,\cP)\to\kG(S,\cP)$ is also injective. A much more difficult question
is whether the natural homomorphism:
\[\hG(S,\cP)\to\Out(\hp_1(S\ssm\cP,Q))\]
is injective. This is known as the \emph{congruence subgroup problem for mapping class groups}. A positive answer is only known for $g(S)\leq 2$
(cf.\ \cite{Asada}, for $g(S)=1$, and \cite{Hyp}, \cite{HI}, \cite{congtop}, for $g(S)=2$). These are particular cases of the more general result
(cf.\ Theorem~1.4 in \cite{congtop}):

\begin{theorem}\label{conghypclosed}Let $(S,\u,\cP)$ be a hyperbolic marked hyperelliptic \emph{closed} surface. Then, the congruence subgroup 
property holds for the associated hyperelliptic mapping class groups, that is to say, there is a natural isomorphism $\hU(S,\cP)\cong\kU(S,\cP)$.
\end{theorem}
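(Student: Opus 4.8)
The plan is to deduce the theorem from the already settled case $\cP=\emptyset$ by induction on $\sharp\,\cP$, using the Birman short exact sequences~(\ref{BirmanExact}). It is enough to prove that $\hPU(S,\cP)\cong\kPU(S,\cP)$ for every hyperbolic marked hyperelliptic closed surface $(S,\u,\cP)$: indeed $\PU(S,\cP)$ is normal of finite index in both $\U(S,\cP)$ and $\U(S,\vec\cP)$, the congruence topology on a subgroup of $\G(S,\cP)$ being by definition the induced one, and the congruence subgroup property passes between a group and such a subgroup by the standard argument (the kernel of $\widehat{G}\to\check{G}$ maps to $1$ in $\check{G}$, hence to $1$ in the finite coset space $\check{G}/\check{H}\cong G/H\cong\widehat{G}/\widehat{H}$, so it lies in $\widehat{H}$ and equals $\ker(\widehat{H}\to\check{H})$; both comparison maps being surjective, one is an isomorphism exactly when the other is).

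The induction runs on $\sharp\,\cP$. For $\cP=\emptyset$ the surface $S$ is closed of genus $g\geq 2$, and $\hPU(S)\cong\kPU(S)$ is the closed, unmarked case of the congruence subgroup property for hyperelliptic mapping class groups, proved in \cite{Hyp} and \cite{congtop} (through the short exact sequence~(\ref{HypEx}) it reduces to the congruence subgroup property for the genus $0$ mapping class group $\G(S_{0,2g+2})$). For the inductive step, fix a point $Q\in S\ssm\cP$ and consider the first Birman short exact sequence in~(\ref{BirmanExact}),
\[1\to\pi_1(S\ssm\cP,Q)\to\PU(S,\cP\cup\{Q\})\to\PU(S,\cP)\to 1,\]
in which $\pi_1(S\ssm\cP,Q)$ sits as the point pushing subgroup. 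Since the congruence topology on $\PG(S,\cP\cup\{Q\})$ induces the full profinite topology on $\pi_1(S\ssm\cP,Q)$ (the procongruence Birman exact sequence), completing with respect to the congruence topology yields the exact sequence
\[1\to\hp_1(S\ssm\cP,Q)\to\kPU(S,\cP\cup\{Q\})\to\kPU(S,\cP)\to 1,\]
while completing with respect to the profinite topology keeps the sequence right exact, with left exactness automatic: the composite of $\hp_1(S\ssm\cP,Q)\to\hPU(S,\cP\cup\{Q\})$ with the natural surjection $\hPU(S,\cP\cup\{Q\})\to\kPU(S,\cP\cup\{Q\})$ is the injective left hand arrow of the sequence above, so the first arrow is injective, and its image, being compact, is closed. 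Hence
\[1\to\hp_1(S\ssm\cP,Q)\to\hPU(S,\cP\cup\{Q\})\to\hPU(S,\cP)\to 1\]
is exact as well, and the natural surjection $\hPU(S,\cP\cup\{Q\})\to\kPU(S,\cP\cup\{Q\})$ fits in a morphism from it to the procongruence sequence which is the identity on the kernels $\hp_1(S\ssm\cP,Q)$ and, by the inductive hypothesis, an isomorphism $\hPU(S,\cP)\to\kPU(S,\cP)$ on the quotients. By the five lemma, $\hPU(S,\cP\cup\{Q\})\cong\kPU(S,\cP\cup\{Q\})$, which closes the induction and, by the first paragraph, proves the theorem for $\U(S,\cP)$, $\U(S,\vec\cP)$ and $\PU(S,\cP)$.

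Two ingredients are not formal. One is the base case, the congruence subgroup property for the hyperelliptic mapping class group of a closed surface, taken from \cite{Hyp} and \cite{congtop}. The other, which I expect to be the main obstacle inside the inductive step, is the procongruence Birman exact sequence: the statement that the congruence (geometric) topology on $\PG(S,\cP\cup\{Q\})$ restricts to the full profinite topology on the point pushing subgroup $\pi_1(S\ssm\cP,Q)$, which is exactly what makes the profinite Birman sequence left exact above. This is a genuine assertion about geometric levels, requiring, for every characteristic finite index subgroup $K$ of $\pi_1(S\ssm\cP,Q)$, a geometric level of $\PG(S,\cP\cup\{Q\})$ meeting $\pi_1(S\ssm\cP,Q)$ inside $K$; it is available from the analysis of the congruence topology in \cite{congtop}, and in the hyperelliptic case it can also be extracted from the Birman--Hilden identification of $\PU(S,\cP)$ with a genus $0$ pure mapping class group $\PG(S_{/\u}\ssm\cB_\u)$. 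Granting these two ingredients, the remaining steps (the reduction to the pure case, the exactness of the profinite Birman sequence, and the five lemma) are routine.
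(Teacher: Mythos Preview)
The paper does not give its own proof of this theorem: it is quoted as Theorem~1.4 of \cite{congtop}. Your reduction by induction on $\sharp\,\cP$ via the Birman short exact sequences, together with the procongruence Birman exact sequence (Corollary~4.7 in \cite{congtop}) and the five lemma, is exactly the ``standard argument'' the paper itself invokes in the proof of the more general Theorem~\ref{conghyp}; so your approach matches the intended one and is correct.

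One small point to tidy up: your base case $\cP=\emptyset$ only makes sense for $g(S)\geq 2$, since a closed surface of genus $\leq 1$ with no marking is not hyperbolic. For $g(S)=1$ the induction must start at $\sharp\,\cP=1$, where $\U(S,\{Q\})=\G(S_{1,1})$ and the congruence subgroup property is Asada's theorem \cite{Asada}; for $g(S)=0$ one has $\U(S,\cP)=\G(S,\cP)$ and the statement is again the genus~$0$ congruence subgroup property. With these base cases added, your argument is complete.
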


\subsection{The congruence subgroup property for hyperelliptic mapping class groups}
Theorem~\ref{conghypclosed} can be extended to an arbitrary hyperbolic marked hyperelliptic surface:

\begin{theorem}\label{conghyp}Let $(S,\u,\cP)$ be a hyperbolic marked hyperelliptic surface. Then, the congruence subgroup 
property holds for the associated hyperelliptic mapping class group, that is to say, there is a natural isomorphism $\hU(S,\cP)\cong\kU(S,\cP)$.
\end{theorem}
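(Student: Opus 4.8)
The plan is to reduce the statement to the case of a closed surface, which is Theorem~\ref{conghypclosed}, by an induction that removes punctures one symmetric orbit at a time, using the Birman short exact sequences~(\ref{BirmanExact}) and the hyperelliptic Birman sequence of Theorem~\ref{Q}. Throughout I use that $\PU(S,\cP)$, $\U(S,\vec\cP)$ and $\U(S,\cP)$ are pairwise commensurable, so that the congruence subgroup property for one is equivalent to it for the others, and, more generally, that the property is a commensurability invariant: if $H$ has finite index in $G$, then $\wh H$ and $\kH$ are open in $\wh G$ and $\kG$, and the comparison map of $H$ is the restriction of that of $G$, so triviality of the congruence kernel passes between them.

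First I would reduce to $\partial S=\emptyset$: restriction of diffeomorphisms to the interior embeds $\U(S,\cP)$ as a finite index subgroup of $\U(\ring{S},\cP)$, and $\ring{S}$ is boundaryless, so it suffices to treat $\ring{S}$. Next, using the restriction embeddings of Section~\ref{HMCGr} (under which $\U(S)$, and hence $\U(S,\cP)$, is commensurable with the hyperelliptic mapping class group of the surface obtained by filling in a chosen Weierstrass puncture), I would absorb the Weierstrass punctures one at a time, each step lowering their number without affecting the congruence subgroup property. We are thus left to prove the theorem for a boundaryless $S=S_{g,n}$ with no Weierstrass punctures, and we argue by induction on $n$, the case $n=0$ being exactly Theorem~\ref{conghypclosed}. (The finitely many low-complexity cases in which these surgeries leave the hyperbolic range, together with all of genus $1$, are already covered by \cite{Asada}, \cite{Hyp} and \cite{congtop}, alternatively by using the Birman--Hilden exact sequence~(\ref{HypEx}) to realise $\U(S)$ as a central extension of a finite index subgroup of a genus $0$ mapping class group.)

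For the inductive step, with $n\ge 1$ and all punctures non-Weierstrass, pick a symmetric pair $\{R,\u(R)\}$ and let $S_1$ be obtained by filling it in, so $n(S_1)=n-2$. Theorem~\ref{Q} exhibits an index $2$ subgroup $\U(S,\cP)_\circ$ of $\U(S,\cP)$ as an extension
\[1\to N_\circ\to\U(S,\cP)_\circ\to\U(S_1,\cP,Q)\to 1,\]
and $\U(S_1,\cP,Q)$ is a finite index subgroup of $\U(S_1,\cP\cup\{Q\})$, which has $n-2$ punctures and so satisfies the congruence subgroup property by the inductive hypothesis. It remains to propagate the property through this extension and through the attendant finite index inclusions, i.e.\ to run the ``profinite Birman exact sequence'' argument in the hyperelliptic setting. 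Concretely, I would establish: (i) that the congruence topology of $\U(S,\cP)_\circ$ restricts on the kernel $N_\circ$ to the full profinite topology, so that $\wh{N_\circ}$ embeds as a closed subgroup of $\kU(S,\cP)_\circ$; and (ii) that the profinite completions of the three terms, and likewise the congruence completions, form compatible short exact sequences. Granting (i) and (ii), a comparison of the two sequences upgrades the congruence subgroup property from $\U(S_1,\cP\cup\{Q\})$ to $\U(S,\cP)$, closing the induction.

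The main obstacle is (i) and (ii) for the kernel $N_\circ$: by Theorem~\ref{HypBirman} this is a \emph{free group of infinite rank}, so the statements are not formal consequences of residual finiteness as they would be for a finitely generated surface kernel. I would handle $N_\circ$ through the identification $N_\circ=\ker\bigl(\pi_1(S\ssm\cW_\cP(S),Q)\to\pi_1(S,Q)\bigr)$ of Theorem~\ref{HypBirman}: since surface groups are good in the sense of Serre, the corresponding sequence of profinite completions is exact, which both pins down $\wh{N_\circ}$ inside $\kU(S,\cP)$ and yields the exactness needed in (ii), while the restriction of the congruence topology is controlled the same way, by comparing geometric levels with the profinite topology of $\pi_1(S\ssm\cW_\cP(S),Q)$. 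Alternatively one can sidestep $N_\circ$ entirely: composing the extension of Theorem~\ref{Q} with an ordinary Birman extension~(\ref{BirmanExact}) (and with the Birman--Hilden exact sequence~(\ref{HypEx})), the \emph{composite} epimorphism onto $\U(S_1,\cP)$ has a \emph{finitely generated} kernel --- indeed a free group, as one sees in the genus $0$ quotient $\G((S_1)_{/\u},\cB_\u)_{p_\u}$ --- which reduces the propagation step to the classical case treated in \cite{Hyp} and \cite{congtop}. Once the propagation lemma is in hand, the induction terminates at the closed case and gives $\hU(S,\cP)\cong\kU(S,\cP)$.
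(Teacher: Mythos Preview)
Your inductive strategy is structurally different from the paper's, and the gap is precisely at the point you flag as ``the main obstacle.'' The paper does \emph{not} induct on the number of symmetric puncture pairs via Theorem~\ref{Q} or Theorem~\ref{HypBirman}. Instead, after reducing (as you do) to $\partial S=\emptyset$ and (via the procongruence Birman sequence of~\cite{congtop}) to $\cP=\emptyset$, it passes directly to genus~$0$ through the Birman--Hilden isomorphism $\PU(S)\cong\PG(S_{/\u}\ssm\cB_\u)$, proves a key lemma that $\hPU(S\ssm\cW_\cP(S))\hookrightarrow\Out(\hp_1(S\ssm\cW_\cP(S)))$ using Asada's results for genus~$0$, and then shows---by a centralizer argument---that the image of $\hPU(S)$ in $\kPG(S\ssm\cW_\cP(S))$ meets the kernel $\hP(2g+2)$ of $\kPG(S\ssm\cW_\cP(S))\to\kPG(S)$ trivially.

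Your propagation step would require a \emph{procongruence hyperelliptic Birman exact sequence}: that the closure of $\pi_1(S_1\ssm\cW_\cP(S_1),Q)$ (or of $N_\circ$) inside $\kU(S,\cP)_\circ$ is its full profinite completion. Neither of your proposed fixes supplies this. Goodness of surface groups controls the \emph{profinite} completions of the sequence, not the \emph{congruence} completions, which are defined through $\Out(\hp_1(S\ssm\cP))$ and not a priori compatible with the filtration coming from $S_1$. And the ``classical case'' you invoke from~\cite{congtop} is the standard Birman sequence, whose kernel $\pi_1(S',Q)$ acts on $\pi_1(S)$ by point-pushing in a surface \emph{containing} $S$; here the kernel $\pi_1(S_1\ssm\cW_\cP(S_1),Q)$ is the fundamental group of a surface neither containing nor contained in $S$, and its action on $\hp_1(S)$ is the symmetric point-pushing whose faithfulness after profinite completion is exactly what needs proof. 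If you try to route through Birman--Hilden to reduce this to a genus~$0$ point-pushing statement, you must still compare the congruence topology on $\U(S)$ (defined via $\hp_1(S)$) with the one on $\G(S_{/\u},\cB_\u)$ (defined via $\hp_1(S_{/\u}\ssm\cB_\u)$)---and that comparison is precisely the content of the paper's Lemma~\ref{reduction} together with the final centralizer argument. So your induction does not circumvent the hard step; it relocates it.
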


\begin{proof}Since $\U(S,\cP)$ is a finite index subgroup of $\U(\ring{S},\cP)$, it is not restrictive to assume that $\partial S=\emptyset$.
Let us consider first the case when $S$ is not hyperbolic, that is to say, either $S$ is closed of genus $1$ and $\sharp\,\cP\geq 1$ or 
$g(S)=0$ and $n(S)\leq 2$. The former case is already dealt by Theorem~\ref{conghypclosed} and the latter easily follows from it.

Let us then assume that $S$ is hyperbolic.
Since $\PU(S,\cP)$ is a finite index subgroup of $\U(S,\cP)$, it is enough to prove the congruence subgroup property for $\PU(S,\cP)$.
By the Birman exact sequence for procongruence mapping class groups (cf.\ Corollary~4.7 in \cite{congtop}), a standard argument reduces 
the proof to the case when $S$ is hyperbolic and $\cP=\emptyset$. 

Note that there is then a natural isomorphism $\PU(S)\cong\PU(S\ssm\cW_\cP(S))$ and the epimorphism $\hPU(S)\to\kPU(S)$ factors
through the induced isomorphism $\hPU(S)\cong\hPU(S\ssm\cW_\cP(S))$ and the epimorphisms $\hPU(S\ssm\cW_\cP(S))\to\kPU(S\ssm\cW_\cP(S))$
and $\kPU(S\ssm\cW_\cP(S))\to\kPU(S)$.

Therefore, the congruence subgroup property for $\PU(S)$ implies that the congruence subgroup property holds
for $\PU(S\ssm\cW_\cP(S))$ as well. We can then also assume that 
$\cW_\circ(S)=\emptyset$ and, in conclusion, that $(S,\u)$ is a hyperbolic surface without boundary, no Weierstrass punctures 
and at least two symmetric punctures (since the closed surface case is already dealt by Theorem~\ref{conghypclosed}). 

\begin{lemma}\label{reduction}For $Q\in S\ssm\cW_\cP(S)$, the natural representation 
\[\hPU(S\ssm\cW_\cP(S))\to\Out(\hp_1(S\ssm\cW_\cP(S),Q))\]
is faithful.
\end{lemma}

\begin{proof}Since, by our hypothesis, $S$ has at least two symmetric punctures, there is a natural isomorphism 
$\hPU(S\ssm\cW_\cP(S))\cong\hPG(S_{/\u}\ssm\cB_\u)$ (cf.\ the short exact sequence~(\ref{PHypEx}) and following remarks). 
So it is enough to show that the representation: 
\begin{equation}\label{genus0red}
\hPG(S_{/\u}\ssm\cB_\u)\to\Out(\hp_1(S\ssm\cW_\cP(S),Q)),
\end{equation}
obtained composing this isomorphism with the given representation, is faithful. 
Let $u\in\pi_1(S\ssm\cW_\cP(S),Q)$ be represented by a loop whose free isotopy class contains a simple closed curve about
one of the punctures obtained removing the Weierstrass points and let $\Aut_u(\hp_1(S\ssm\cW_\cP(S),Q))$ be the subgroup 
of $\Aut(\hp_1(S\ssm\cW_\cP(S),Q))$ consisting of those automorphisms which fix the element $u$. 
There is then a natural homomorphism:
\[\hat{\Theta}_u\co\Aut_u(\hp_1(S\ssm\cW_\cP(S),Q))\left/\Inn(u^\ZZ)\right.\to\Out(\hp_1(S\ssm\cW_\cP(S),Q))\]
which, by Lemma~2.2 in \cite{congtop}, is injective. Since the representation~(\ref{genus0red}) factors through 
the homomorphism $\hat{\Theta}_u$ and a natural representation:
\begin{equation}\label{genus0red2}
\hPG(S_{/\u}\ssm\cB_\u)\to\Aut_u(\hp_1(S\ssm\cW_\cP(S),Q))\left/\Inn(u^\ZZ)\right.,
\end{equation}
it is then enough to prove that the representation~(\ref{genus0red2}) is faithful. 

Let us observe that the fundamental group $\pi_1(S\ssm\cW_\cP(S),Q)$ identifies with an index $2$ subgroup of $\pi_1(S_{/\u}\ssm\cB_\u,\bar Q)$,
where $\bar Q$ is the image of the base point $Q$ by the covering map $S\ssm\cW_\cP(S)\to S_{/\u}\ssm\cB_\u$. 
Then, by Lemma~8 in \cite{Asada}, the natural representation:
\[R_\u\co\Aut_u^\sharp(\hp_1(S_{/\u}\ssm\cB_\u,\bar Q))\left/\Inn(u^\ZZ)\right.\to\Aut_u(\hp_1(S\ssm\cW_\cP(S),Q))\left/\Inn(u^\ZZ)\right.\]
is faithful, where we denote by $\Aut_u^\sharp(\hp_1(S_{/\u}\ssm\cB_\u,\bar Q))$ the subgroup of $\Aut_u(\hp_1(S_{/\u}\ssm\cB_\u,\bar Q))$
consisting of those elements which preserve the subgroup $\hp_1(S\ssm\cW_\cP(S),Q)$. 

The representation~(\ref{genus0red2}) then factors through the representation $R_\u$ and a natural representation:
\[\hPG(S_{/\u}\ssm\cB_\u)\to\Aut_u^\sharp(\hp_1(S_{/\u}\ssm\cB_\u,\bar Q))\left/\Inn(u^\ZZ)\right..\]

Thus, it is enough to prove that this last representation is faithful, which follows by the ordinary case of the congruence subgroup property for genus $0$
pure mapping class groups (cf.\ Theorem~1 in \cite{Asada}).
\end{proof}

From the natural isomorphism $\hPU(S)\cong\hPU(S\ssm\cW_\cP(S))$ and Lemma~\ref{reduction}, it follows that, 
for $Q\in S\ssm\cW_\cP(S)$, there is a natural faithful representation:
\[\hPU(S)\hookrightarrow\Out(\hp_1(S\ssm\cW_\cP(S),Q)).\]

In particular, the natural homomorphism $\hPU(S)\to\kPG(S\ssm\cW_\cP(S))$ is injective. In order to prove the theorem,
we then have to show that its composition with the natural epimorphism $f_{\cW_\cP}\co\kPG(S\ssm\cW_\cP(S))\to\kPG(S)$ is still injective.

Let us denote by $S(n)$ the configuration space of $n$ distinct points on $S$, by $\Pi(n)$ its fundamental group and by $\hP(n)$ the profinite
completion of this group. From Theorem~1 in \cite{Asada} and the Birman short exact sequences for procongruence mapping class groups
(cf.\ Corollary~4.7 in \cite{congtop}), it follows that the kernel of $f_{\cW_\cP}$ identifies with the group
$\hP(2g(S)+2)$. Hence, the theorem follows if we show that the image of 
$\hPU(S)\to\kPG(S\ssm\cW_\cP(S))$ has trivial intersection with the subgroup $\hP(2g(S)+2)$. 

Since the image of $\hPU(S)$
in $\kPG(S\ssm\cW_\cP(S))$ is centralized by the hyperelliptic involution $\u$, it is enough to prove that the centralizer
of $\u$ in $\kPG(S\ssm\cW_\cP(S))$ has trivial intersection with the subgroup $\hP(2g(S)+2)$. The last claim can be proved by the
same argument given at the end of the proof of Lemma~3.7 in \cite{Hyp} or, alternatively, it is an immediate consequence of Lemma~6.6 in \cite{congtop}.
\end{proof}

\subsection{Hyperelliptic mapping class groups are good}
In Proposition~3.2 in \cite{Hyp}, we showed that the hyperelliptic mapping class group $\U(S,\vec\cP)$ of a marked hyperelliptic closed 
surface is \emph{good}, that is to say, according to Serre's definition (cf.\ Exercise~1, Section~2.6 in \cite{SerreGalois}), the natural homomorphism 
$\U(S,\vec\cP)\to\hU(S,\vec\cP)$ induces an isomorphism on cohomology with finite coefficients. It is not difficult 
to extend this result to arbitrary hyperbolic marked hyperelliptic surfaces:

\begin{theorem}\label{goodhyp}Let $(S,\u,\cP)$ be a hyperbolic marked hyperelliptic surface. Then, the natural homomorphism 
$\U(S,\cP)\to\hU(S,\cP)$ induces, for every discrete, finite, continuous $\hU(S,\cP)$-module $M$, an isomorphism 
$H^i(\hU(S,\cP),M)\sr{\sim}{\to} H^i(\U(S,\cP),M)$, for all $i\geq 0$.
\end{theorem}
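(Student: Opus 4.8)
The plan is to propagate Serre's goodness through the structural short exact sequences already assembled for Theorem~\ref{conghyp}, reducing to groups that are known to be \emph{good}. I will use three standard facts about goodness: (i) a finite-index subgroup of a good group is good; (ii) if $N\trianglelefteq\Gamma$ has finite index, is good, and is of \emph{finite cohomological type} (i.e. $H^i(N,M)$ is finite for all $i\geq 0$ and all finite $N$-modules $M$), then $\Gamma$ is good; (iii) if $1\to N\to\Gamma\to Q\to 1$ is exact with $N$ and $Q$ good, $N$ of finite cohomological type, and the induced sequence of profinite completions again exact, then $\Gamma$ is good. (For $N$ finite or of finite index, the exactness of profinite completion needed in (ii), (iii) is automatic, since the profinite topology of $\Gamma$ then induces the profinite topology on $N$.) The good building blocks I will appeal to are: finite groups; finitely generated free groups; closed surface groups; and the pure mapping class groups $\PG(S_{0,n})$ of genus-$0$ surfaces — the last because the genus-$0$ Birman sequences $1\to\pi_1(S_{0,m})\to\PG(S_{0,m+1})\to\PG(S_{0,m})\to 1$ have free kernel and realise $\PG(S_{0,n})$ as an iterated extension of free groups, which also shows it is of finite cohomological type.

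First I would reduce to $\partial S=\emptyset$: as $\U(S,\cP)$ has finite index in $\U(\ring S,\cP)$ and $\ring S$ has empty boundary, (i) lets us assume $S$ has no boundary. Then, since $\PU(S,\cP)$ is normal of finite index in $\U(S,\cP)$ and is of finite cohomological type, (ii) reduces us to proving $\PU(S,\cP)$ good. Now apply the first Birman sequence of~(\ref{BirmanExact}),
\[1\to\pi_1(S\ssm\cP,Q)\to\PU(S,\cP\cup\{Q\})\to\PU(S,\cP)\to 1,\]
whose kernel is a surface group — free if $\cP\neq\emptyset$, a closed surface group otherwise — hence good and of finite cohomological type. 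By Theorem~\ref{conghyp} the profinite completion of $\PU(S,\cP')$ agrees with its procongruence completion, so the profinite completion of this Birman sequence coincides with the procongruence one, which is exact by Corollary~4.7 in \cite{congtop}. Fact (iii) and an induction on $\sharp\cP$ now reduce the statement to $\cP=\emptyset$.

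Finally, for a hyperbolic hyperelliptic surface $(S,\u)$ without boundary I would invoke the short exact sequence~(\ref{PHypEx}),
\[1\to\langle\u\rangle\cap\PU(S)\to\PU(S)\to\PG(S_{/\u},\cB_\u)\to 1,\]
whose kernel is trivial or $\Z/2$, hence finite, and whose quotient $\PG(S_{/\u},\cB_\u)\cong\PG(S_{/\u}\ssm\cB_\u)$ is a genus-$0$ pure mapping class group, hence good. By (iii), $\PU(S)$ is good, and reversing the chain of reductions proves the theorem. (In particular this re-proves the closed-surface case, Proposition~3.2 in \cite{Hyp}.)

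The argument is essentially formal once the ingredients are in place, so the main work is conceptual bookkeeping rather than computation: one must check that each kernel arising is of finite cohomological type and that profinite completion is exact along each sequence used. The former is standard for all the groups involved; the latter, for the Birman sequences, is precisely where Theorem~\ref{conghyp} enters (it lets one replace profinite by procongruence completions, where exactness is available), while for the finite-index extensions it is automatic. The one genuinely external input is the goodness of the genus-$0$ pure mapping class groups $\PG(S_{0,n})$, which is immediate from the iterated-free-extension description recalled above.
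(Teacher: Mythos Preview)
Your proof is correct and follows essentially the same route as the paper: reduce to the pure group $\PU(S,\cP)$ via the finite-index criterion, strip off marked points one at a time using the Birman sequence~(\ref{BirmanExact}) together with exactness of its profinite completion, and finish via the sequence~(\ref{PHypEx}) and the known goodness of genus-$0$ pure mapping class groups. Your justification of profinite exactness of the Birman sequence via Theorem~\ref{conghyp} and Corollary~4.7 of \cite{congtop} is exactly what the paper invokes (more tersely), and your explicit reduction to $\partial S=\emptyset$ is a minor extra step the paper leaves implicit.
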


\begin{proof}Since the pure hyperelliptic mapping class group has finite index in the hyperelliptic mapping class group $\U(S,\cP)$,
by Hochschild-Serre spectral sequence, it is enough to show that $\PU(S,\cP)$ has this property. 

It is well known (cf.\ also the proof of Theorem~\ref{conghyp}) that the Birman short exact sequence~(\ref{BirmanExact}) 
induces one of profinite completions:
\begin{equation}\label{BirmanExactPro}
1\to\hp_1(S\ssm\cP,Q)\to\hPU(S,\cP\cup\{Q\})\to\hPU(S,\cP)\to 1.
\end{equation}

Since the fundamental group of a surface is good, by Hochschild-Serre spectral sequence, we have that, if $\PU(S,\cP)$ is good, then
$\PU(S,\cP\cup\{Q\})$ has the same property. By a simple inductive argument, we are then reduced to prove that $\PU(S)$ is good.

The short exact sequence~(\ref{PHypEx}) implies that $\PU(S)$ is good, if and only if, $\PG(S_{/\u},\cB_\u)$ is good. This is well known
(again by the profinite Birman short exact sequence and Hochschild-Serre spectral sequence), which completes the proof of the theorem.
\end{proof}

From Exercise~2, Section~2.6 in \cite{SerreGalois}, it then follows:

\begin{corollary}\label{HypBirmanExactPro}The hyperelliptic Birman exact sequences~(\ref{HypBirmanEx}) induce
the short exact sequences, on profinite completions:
\begin{equation}\label{HypBirmanExPro}\begin{array}{ll}
&1\to\hp_1(S\ssm\cW_\cP(S),Q)\to\hU(S^\circ)_\circ\to\hU(S)\to 1\\
\mbox{and}\hspace{0.5cm}&\\
&1\to\hp_1(S\ssm\cW_\cP(S),Q)\to\hPU(S^\circ)\to\hPU(S)\to 1.
\end{array}
\end{equation}
\end{corollary}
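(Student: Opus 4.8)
The plan is to obtain both sequences at once from the general criterion of Serre cited in the statement (Exercise~2, Section~2.6 in \cite{SerreGalois}): given a short exact sequence of discrete groups $1\to N\to G\to Q\to 1$ in which $N$ is finitely generated and good, $Q$ is good, and $H^i(N,M)$ is finite for every finite $N$-module $M$ and every $i\ge 0$, the induced sequence of profinite completions $1\to\widehat N\to\widehat G\to\widehat Q\to 1$ is exact (and, as a bonus, $G$ is good). So the task reduces to checking these hypotheses for each of the two short exact sequences furnished by Theorem~\ref{HypBirman}.

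First I would dispose of the kernel, which in both cases is $N=\pi_1(S\ssm\cW_\cP(S),Q)$. By Remark~\ref{Bureau}~(1) this is the index $2$ subgroup of $\pi_1(S_{/\u}\ssm\cB_\u,\,p_\u(Q))$ associated to the unramified double cover $S\ssm\cW_\cP(S)\to S_{/\u}\ssm\cB_\u$; since $S_{/\u}$ has genus $0$ and $\cB_\u$ is a nonempty finite set, $\pi_1(S_{/\u}\ssm\cB_\u,\,p_\u(Q))$ is free of finite rank, hence so is $N$. A finitely generated free group is good, has cohomological dimension $\le 1$ and is of type $FP$, so $H^i(N,M)$ is finite for every finite module $M$; all hypotheses on $N$ therefore hold. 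Next I would handle the quotients: for the first sequence $Q=\U(S)$, which is good by Theorem~\ref{goodhyp} applied with $\cP=\emptyset$, and for the second $Q=\PU(S)$, whose goodness is precisely what is proved inside the proof of Theorem~\ref{goodhyp} (via the short exact sequence~(\ref{PHypEx}), reducing it to the classical goodness of $\PG(S_{/\u},\cB_\u)$). Applying Serre's criterion to the first sequence of Theorem~\ref{HypBirman} yields exactness of $1\to\hp_1(S\ssm\cW_\cP(S),Q)\to\widehat{\U(S^\circ)_\circ}\to\hU(S)\to 1$; as $\U(S^\circ)_\circ$ has index $2$ in the residually finite group $\U(S^\circ)$, its profinite completion is the open subgroup of $\hU(S^\circ)$ written $\hU(S^\circ)_\circ$, and this is the first sequence of~(\ref{HypBirmanExPro}). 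Applying the criterion to the second (pure) sequence of Theorem~\ref{HypBirman} gives the second sequence of~(\ref{HypBirmanExPro}) directly.

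The hard part here is not in this argument at all: the only non-formal ingredient is the goodness of $\U(S)$ and of $\PU(S)$, and that is exactly the content of Theorem~\ref{goodhyp}. Everything else — finite generation and goodness of the free kernel, finiteness of its cohomology with finite coefficients, and the identification of $\widehat{\U(S^\circ)_\circ}$ with the open subgroup $\hU(S^\circ)_\circ\subset\hU(S^\circ)$ — is routine, so the proof will be essentially a two-line deduction once the criterion is quoted.
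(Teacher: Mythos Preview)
Your proposal is correct and is essentially the paper's own argument spelled out in full: the paper derives the corollary in one line by invoking Exercise~2, Section~2.6 of \cite{SerreGalois} immediately after Theorem~\ref{goodhyp}, and you have simply made explicit the verification of the hypotheses of that exercise (goodness and cohomological finiteness of the free kernel, goodness of the quotient via Theorem~\ref{goodhyp}).
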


\subsection{The complex of profinite curves}
Let us recall a few definitions and results from sections~3 and~4 of \cite{B1}.
Let $G\to G'$ be a homomorphism of an abstract group in a profinite group, with dense image, 
and let $\{G^\l\}_{\l\in\L}$ be the cofiltered system formed by the sugroups of $G$ which are inverse images of open 
subgroups of $G'$. Let $X_\bt$ be a simplicial set endowed with a geometric $G$-action (see Section~3 in \cite{B1}) such that 
the set of $G$-orbits in $X_n$ is finite for all $n\geq 0$. Assume, moreover, that there exists a 
$\mu\in\L$ such that $X_\bt$, with the induced $G^\mu$-action, is a simplicial $G^\mu$-set. 

The $G'$-completion of $X_\bt$ is defined to be the simplicial profinite set
\[X_\bt':=\varprojlim_{\l\in\L,\,\l\leq\mu}X_\bt/G^\l,\]
which is then endowed with a natural continuous geometric action of $G'$. The simplicial profinite 
set $X'_\bt$ has the following universal property. Let $f\co X_\bt\to Y_\bt$ be a simplicial 
$G$-equivariant map, where $Y_\bt$ is a simplicial profinite set endowed with a continuous 
geometric action of $G'$. Then $f$ factors uniquely through the natural map
$X_\bt\to X_\bt'$ and a simplicial $G'$-equivariant continuous map $f'\co X_\bt'\to Y_\bt$.

For $S$ a hyperbolic surface, by Proposition~2.2 in \cite{B1}, there is then an order of the vertex set of 
the simplicial complex $C(S)$ such that the associated simplicial set $C(S)_\bt$ is a $\G(S)^{[4]}$-simplicial set 
endowed with a natural geometric $\G(S)$-action.  For this order, the conditions prescribed in order to define
the $\kG(S)$-completion of the simplicial set $C(S)_\bt$ are all satisfied. 
The \emph{procongruence curve complex} $\kC(S)_\bt$ is then the simplicial profinite set defined as the
$\kG(S)$-completion of $C(S)_\bt$. It comes with a natural injective $\G(S)$-equivariant map 
$C(S)_\bt\subseteq\kC(S)_\bt$ (cf.\ Proposition~5.1 in \cite{PFT}).

For $\G^\l$ a finite index subgroup of $\G(S)$ contained in the abelian level $\G(S)^{[4]}$, the quotient
$C^\l(S)_\bt:=C(S)_\bt/\G^\l$ identifies with the nerve of the DM boundary of $\ccM(S)^\l$.
Therefore the procongruence curve complex $\kC(S)_\bt$ can also be defined to be the inverse limit of the
nerves of the DM boundaries of all geometric level structures over $\cM(S)$.

A \emph{simplicial profinite complex} is an abstract simplicial complex whose 
set of vertices is endowed with a profinite topology such that the sets of $k$-simplices, 
with the induced topologies, are compact and then profinite, for all $k\geq 0$. For these simplicial 
complexes, the procedure, which associates to an abstract simplicial complex and an order of
its vertex set a simplicial set, produces a simplicial profinite set.
By Theorem~4.2 of \cite{B1}, $\kC(S)_\bt$ has a realization as the simplicial profinite set associated to
an abstract simplicial profinite complex which is defined in terms of the fundamental group $\pi_1(S,Q)$ and its profinite
completion $\hp_1(S,Q)$ as follows. 

Let $\cL(S)$ be the set of isotopy classes of simple closed curves on $S$. 
Let then $\pi_1(S,Q)/\!\sim$ be the set of conjugacy classes of elements of $\pi_1(S,Q)$ and let $\cP_2(\pi_1(S,Q)/\!\sim)$ be the set of 
unordered pairs of elements of $\pi_1(S,Q)/\!\sim$. For a given $\g\in\pi_1(S,Q)$, let us denote by $\g^{\pm 1}$ the set $\{\g,\g^{-1}\}$
and by $[\g^{\pm 1}]$ its equivalence class in $\cP_2(\pi_1(S,Q)/\!\sim)$. There is a natural embedding 
$\iota\co\cL(S)\hookra\cP_2(\pi_1(S,Q)/\!\sim)$ defined by choosing, for $\g\in\cL(S)$, an element $\vec{\g}_\ast\in\pi_1(S,Q)$ 
whose free isotopy class contains $\g$ and letting $\iota(\g):=[\vec{\g}_\ast^{\pm 1}]$.

Let $\hp_1(S,Q)$ be the profinite completion of $\pi_1(S,Q)$ and $\hp_1(S,Q)/\!\sim$ be the set of conjugacy classes of elements 
of $\hp_1(S,Q)$ and $\cP_2(\hp_1(S,Q)/\!\sim)$ the profinite set of unordered pairs of elements of $\hp_1(S,Q)/\!\sim$. 
Since $\pi_1(S,Q)$ is conjugacy separable, the set $\pi_1(S,Q)/\!\sim$ embeds in the profinite set $\hp_1(S,Q)/\!\sim$. 
We define the set of \emph{profinite simple closed curves} $\hL(S)$ on $S$ to be the closure of the set $\iota(\cL(S))$ inside 
the profinite set $\cP_2(\hp_1(S,Q)/\!\sim)$. An order of the set $\{\alpha,\alpha^{-1}\}$ is preserved by the conjugacy action and
defines an \emph{orientation} for the associated equivalence class $[\alpha^{\pm1}]\in\hL(S)$.

For all $k\geq 0$, there is a natural embedding of the set $C(S)_{k-1}$ of isotopy classes of \emph{$k$-multicurves}, that is to say,
multicurves on $S$ of cardinality $k$, into the profinite set $\cP_k(\hL(S))$
of unordered subsets of $k$ elements of $\hL(S)$. The set of \emph{profinite $k$-multicurves}
on $S$ is then the closure of the set $C(S)_{k-1}$ inside the profinite set $\cP_k(\hL(S))$, for $k>0$.

\begin{definition}\label{geopro}Let $L(\hP(S))$ be the abstract simplicial profinite complex whose $k$-simplices are the profinite $k$-multicurves on
$S$. The abstract simplicial profinite complex $L(\hP(S))$ is called the \emph{complex of profinite curves on $S$}.
\end{definition}

By Theorem~4.2 of \cite{B1}, for a suitable order of the vertex set of $L(\hP(S))$, there is a natural isomorphism:
\begin{equation}\label{realizationprof}
\kC(S)_\bt\cong L(\hP(S))_\bt.
\end{equation}

\subsection{The complex of profinite symmetric curves}
For a hyperbolic marked hyperelliptic surface $(S,\u,\cP)$, by definition, there is a natural $\U(S,\u,\cP)$-equivariant embedding 
of simplicial complexes $C(S,\u,\cP)\subseteq C(S\ssm\cP)$ and hence, for a suitable order of the vertex set, an embedding of 
$\U(S,\cP)^{[4]}$-simplicial sets $C(S,\u,\cP)_\bt\subseteq C(S\ssm\cP)_\bt$. 

In particular, the conditions we need in order to define
the $\hU(S,\cP)$-completion of the simplicial set $C(S,\u,\cP)_\bt$ are satisfied.
The \emph{symmetric profinite curve complex} $\hC(S,\u,\cP)_\bt$ is then defined to be the $\hU(S,\cP)$-completion of the
simplicial set $C(S,\u,\cP)_\bt$. Note that the natural $\U(S,\cP)$-equivariant map $C(S,\u,\cP)_\bt\to\hC(S,\u,\cP)_\bt$ is then also an embedding.

Since, by Theorem~\ref{conghyp}, we have $\hU(S,\cP)=\kU(S,\cP)$ the $\hU(S,\cP)$-completion of $C(S,\u,\cP)_\bt$ coincides with its
$\kU(S,\cP)$-completion. We will show that the natural $\hU(S,\u,\cP)$-equivariant continuous map of simplicial profinite sets
$\hC(S,\u,\cP)_\bt\to \kC(S\ssm\cP)_\bt$ is an embedding.

For $\U^\l$ a finite index subgroup of $\U(S,\cP)$ contained in the abelian level $\U(S,\cP)^{[4]}$, the quotient finite simplicial set 
$C^\l(S,\u,\cP)_\bt:=C(S,\u,\cP)_\bt/\U^\l$ identifies with the nerve of the DM boundary of $\ccM(S,\u,\cP)^\l$.
Therefore, the profinite curve complex $\hC(S,\u,\cP)_\bt$ is naturally isomorphic to the inverse limit of the
nerves of the DM boundaries of all level structures over $\cM(S,\u,\cP)$. This fact implies the following nontrivial description
of stabilizers for the continuous action of $\hU(S,\cP)$ on $\hC(S,\u,\cP)_\bt$ (cf.\ Proposition~6.5 in \cite{PFT}):

\begin{theorem}\label{stabilizerprohyp}Let $\hU^\l$ be an open subgroup of $\hU(S,\cP)$ and let $\U^\l:=\hU^\l\cap\U(S,\cP)$, where we have 
identified $\U(S,\cP)$ with its image in $\hU(S,\cP)$. Then, we have:
\begin{enumerate}
\item For $\s\in C(S,\u,\cP)_\bt\subset\hC(S,\u,\cP)_\bt$, the stabilizer $\hU_\s^\l$ of $\s$ for the action of $\hU^\l$ on $\hC(S,\u,\cP)_\bt$ is 
the closure, inside the profinite group $\hU(S,\cP)$, of the stabilizer $\U_\s^\l$ of $\s$ for the action of $\U^\l$ on $C(S,\u,\cP)_\bt$.
\item The stabilizer $\hU_\s^\l$ is the profinite completion of the stabilizer $\U_\s^\l$.
\end{enumerate}
\end{theorem}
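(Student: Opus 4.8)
The plan is to follow the strategy of Proposition~6.5 in \cite{PFT}, using the identification recorded just above of $\hC(S,\u,\cP)_\bt$ with the inverse limit $\varprojlim_\l C(S,\u,\cP)_\bt/\U^\l$ of the nerves of the DM boundaries of the level structures $\ccM(S,\u,\cP)^\l$, together with the congruence subgroup property of Theorem~\ref{conghyp}. Throughout, $\U^\l$ runs over the normal finite index subgroups of $\U(S,\cP)$ contained in $\U(S,\cP)[4]$, which by Theorem~\ref{conghyp} are cofinal among all finite index subgroups of $\U(S,\cP)$, so that $\hU(S,\cP)=\varprojlim_\l\U(S,\cP)/\U^\l$ acts on $\hC(S,\u,\cP)_\bt$ componentwise.

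First I would fix symmetric representatives for $\s$, so that $\u(\s)=\s$, and record the combinatorial fact behind item~(i). Since $\bigl(C(S,\u,\cP)_\bt/\U^\l\bigr)_k=C(S,\u,\cP)_k/\U^\l$, the full preimage in $C(S,\u,\cP)_\bt$ of the image $\bar\s_\l$ of $\s$ is exactly the $\U^\l$-orbit of $\s$; hence, $\U^\l$ being normal, the stabilizer of $\bar\s_\l$ in $\U(S,\cP)/\U^\l$ is $\U^\l\,\U(S,\cP)_\s/\U^\l$. The point $\s\in\hC(S,\u,\cP)_\bt$ is the compatible system $(\bar\s_\l)_\l$, so an element of $\hU(S,\cP)$ fixes it precisely when each of its images fixes $\bar\s_\l$; therefore
\[
\hU(S,\cP)_\s\;=\;\varprojlim_\l\U^\l\,\U(S,\cP)_\s/\U^\l\;=\;\overline{\U(S,\cP)_\s},
\]
the closure being taken in $\hU(S,\cP)$. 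Restricting to the open subgroup $U$ and using that $\U(S,\cP)_\s\cap U=\U_\s$ is dense in the open subset $\overline{\U(S,\cP)_\s}\cap U$ of $\overline{\U(S,\cP)_\s}$, we obtain $U_\s=U\cap\overline{\U(S,\cP)_\s}=\overline{\U_\s}$, which is item~(i).

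It then remains, for item~(ii), to show that the topology induced on $\U(S,\cP)_\s$ by $\hU(S,\cP)$ — equivalently, the congruence topology of $\U(S,\cP)$ restricted to $\U(S,\cP)_\s$ — is the full profinite topology, i.e.\ that every finite index subgroup of $\U(S,\cP)_\s$ contains $\U(S,\cP)_\s\cap\U^\l$ for some $\l$; the corresponding statement for the finite index subgroup $\U_\s$ then follows formally. For this I would use the presentation of $\U(S,\cP)_\s$ given by the short exact sequences~(\ref{stabHyp}) and~(\ref{stabPHyp}), which exhibit $\U(S,\cP)_\s$ as an extension of a finite index subgroup of the hyperelliptic mapping class group $\U(S\ssm\s,\cP)$ of the (possibly disconnected) cut surface by the finitely generated free abelian twist group $\prod_{\g\in\s}\tau_\g^\Z$. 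On the twist group the congruence topology is already the full profinite topology, since the procongruence closure of each $\tau_\g^\Z$ is $\hZ$ and the closures of the factors attached to disjoint curves are independent; on the quotient it is the full profinite topology by the congruence subgroup property, applied to $\U(S\ssm\s,\cP)$ after decomposing it through the product and wreath product formulas~(\ref{wreathHMCG}), (\ref{PHMCG}), (\ref{wreathHMCG2}), (\ref{PHMCG2}) into hyperelliptic mapping class groups of connected hyperbolic surfaces, to which Theorem~\ref{conghyp} applies. The link between these two facts is the assertion that the subgroups $\U^\l$ restrict, under the natural map of~(\ref{stabHyp}), to a cofinal family of levels of $\U(S\ssm\s,\cP)$; geometrically this reflects the description of the DM boundary of hyperelliptic level structures in Section~\ref{boundaryDMHyp}, whereby the normalized boundary stratum $\td{\d}^\u_\s$ of $\ccM(S,\u,\cP)^\l$ realizes, through the finite \'etale cover $\td{b}^\u_\s$, a level structure of the moduli of the cut surface (as in $\prod_i\CPM(S^i,\u_i,\cP^i)$, respectively $\prod\CPM(S^i,\u_i)\times\prod\CPM(S^i)$ of~(\ref{DMbHyp}) when $\u$ permutes components), these exhausting all such level structures as $\l$ varies. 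A standard extension-of-profinite-groups argument then promotes these compatibilities to the desired statement for $\U(S,\cP)_\s$, hence for $\U_\s$.

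The main obstacle is this last compatibility step — verifying that cutting along $\s$ converts the tower of levels of $\U(S,\cP)$ into a cofinal tower of levels on each boundary piece $\U(S\ssm\s,\cP)$ — together with the bookkeeping for the disconnected cut surface and the re-labelling of its symmetric punctures and boundary components, in particular the case $\cP\neq\emptyset$ in which $\u$ does not preserve all components of $S\ssm\s$, which is only treated in Section~\ref{boundaryDMHyp} under extra hypotheses and has to be handled by the analogous argument. Granting the geometric description of Section~\ref{boundaryDMHyp} and the congruence subgroup property for the boundary pieces, the remaining steps are routine manipulations with inverse limits and group extensions, and the proof can be organized as a reference to the analogous argument of \cite{PFT}.
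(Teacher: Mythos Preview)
Your argument for item~(i) is essentially the paper's: both identify the stabilizer at each finite level with $\U_\s/\U^\l_\s$ and pass to the inverse limit. You phrase this combinatorially (preimage equals $\U^\l$-orbit), while the paper phrases it geometrically (the finite-level stabilizer is the Galois group of the covering of punctured tubular neighborhoods of the stratum, and $\pi_1(\wh{\Delta}_\s^\U)\cong\U_\s$); the content is the same.

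For item~(ii) the paper takes a different route that sidesteps precisely the obstacle you flag. You try to run the extension argument on the cut surface $\U(S\ssm\s,\cP)$, and correctly note that Section~\ref{boundaryDMHyp} only describes the boundary stratum explicitly when $\u$ preserves every component of $S\ssm\s$, leaving the general $\cP\neq\emptyset$ case to be handled ``by the analogous argument.'' The paper avoids this entirely. It first reduces to $U=\hPU(S,\cP)$ and, for $\cP=\emptyset$, argues as you do via~(\ref{stabPHyp}), (\ref{wreathPMCG}), (\ref{PHMCG}), Theorem~\ref{conghyp} and Theorem~4.5 of \cite{B1}. For $\cP\neq\emptyset$, instead of cutting along $\s$, it \emph{forgets the marked points}: it uses the epimorphism $\PU(S,\cP)_\s\to\PU(S)_{\bar\s}$, observes that by the very definition of $\PU(S,\cP)$ its kernel coincides with that of the ordinary map $\PG(S,\cP)_\s\to\PG(S)_{\bar\s}$, and then invokes Theorem~4.5 of \cite{B1} together with the procongruence Birman sequence (Corollary~4.7 of \cite{congtop}) to see that this kernel carries the full profinite topology. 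The quotient is handled by the $\cP=\emptyset$ case, and the extension argument finishes.

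So your strategy is sound, but the cofinality-of-levels step you single out as the main obstacle is genuinely delicate in your setup, and the paper's reduction via point-forgetting (rather than curve-cutting) is what buys a clean proof: it replaces the missing hyperelliptic boundary description for marked surfaces with the already-established procongruence results for ordinary mapping class groups.
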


\begin{proof}(i): For $\U^\mu$ a finite index normal subgroup of $\U^\l$ contained in $\U(S,\cP)^{[4]}$, the stabilizer $(\U^\l/\U^\mu)_{\s_\mu}$ of the image 
$\s_\mu$ of $\s$ in the quotient $C^\mu(S,\u,\cP)_\bt$ for the action of the finite group $\U^\l/\U^\mu$ identifies with the Galois group of the normal \'etale 
covering $\wh{\Delta}_\s^\mu\to\wh{\Delta}_\s^\l$, where $\wh{\Delta}_\s^\mu$ and $\wh{\Delta}_\s^\l$ are punctured tubular neighborhoods of 
the open strata parameterized by $\s$ in $\ccM(S,\u,\cP)^\mu$ and $\ccM(S,\u,\cP)^\l$, respectively. Since the fundamental group of 
$\wh{\Delta}_\s^\l$ is isomorphic to the stabilizer $\U^\l_\s$, it follows that $(\U^\l/\U^\mu)_{\s_\mu}=\U^\l_\s/\U^\mu_\s$. 
By taking the inverse limit on all such $\U^\mu$, we get the first item of the theorem. 
\smallskip

\noindent
(ii): It is not restrictive to take $\hU^\l=\hPU(S,\cP)$. For $\cP=\emptyset$, the conclusion then follows from the exact sequences~(\ref{stabPHyp}),
the isomorphisms~(\ref{wreathPMCG}) and~(\ref{PHMCG}), Theorem~4.5 in \cite{B1} and the congruence subgroup property for hyperelliptic  
mapping class groups (cf.\ Theorem~\ref{conghyp}).

For $\cP\neq\emptyset$, given $\s\in C(S,\u,\cP)$, let us denote by $\bar\s$ the, possibly trivial, isotopy class in $S$ of a multicurve on $S\ssm\cP$ 
in the isotopy class $\s$. There is then a natural epimorphism $\PU(S,\cP)_\s\to\PU(S)_{\bar\s}$ whose kernel, by the definition of $\PU(S,\cP)$, coincides
with the kernel of the natural epimorphism $\PG(S,\cP)_\s\to\PG(S)_{\bar\s}$. By the case $\cP=\emptyset$ considered above, 
it is then enough to show that the kernel of the natural epimorphism $\kPG(S,\cP)_\s\to\kPG(S)_{\bar\s}$ is the profinite completion of the kernel of
$\PG(S,\cP)_\s\to\PG(S)_{\bar\s}$. But this immediately follows from Theorem~4.5 in \cite{B1} and the Birman short exact sequence for procongruence
mapping class groups (cf.\ Corollary~4.7 in \cite{congtop}).
\end{proof}

From the description~(\ref{stabHyp}) of the stabilizer $\U(S,\cP)_\s$ of a simplex $\s\in C(S,\u,\cP)$ for the action of the hyperelliptic mapping class group
$\U(S,\cP)$, it then follows:

\begin{corollary}\label{stabilizerprohypdescription}For a simplex $\s\in\hC(S,\u,\cP)_\bt$ in the image of $C(S,\u,\cP)_\bt$, the stabilizer $\hU(S,\cP)_\s$
for the action of the profinite hyperelliptic mapping class group $\hU(S,\cP)$ on $\hC(S,\u,\cP)_\bt$ is described by the short exact sequence:
\begin{equation}\label{stabproHyp}
1\to\prod_{\g\in\s}\tau_\g^{\ZZ}\to\hU(S,\cP)_\s\to\hU(S\ssm\s,\cP),
\end{equation}
where $\tau_\g^{\ZZ}$, $\hU(S,\cP)_\s$ and $\hU(S\ssm\s,\cP)$ are naturally isomorphic to the profinite completions of the groups 
$\tau_\g^{\Z}$, $\U(S,\cP)_\s$ and $\U(S\ssm\s,\cP)$,
respectively.
\end{corollary}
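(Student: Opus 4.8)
The plan is to obtain~(\ref{stabproHyp}) by applying the profinite completion functor to the abstract short exact sequence~(\ref{stabHyp}) and identifying the resulting completions with the stabilizers inside $\hU(S,\cP)$. The identification of the middle term is already at hand: by item~(ii) of Theorem~\ref{stabilizerprohyp}, applied with $U=\hU(S,\cP)$ itself, the stabilizer $\hU(S,\cP)_\s$ for the action of $\hU(S,\cP)$ on $\hC(S,\u,\cP)_\bt$ is naturally isomorphic to the profinite completion of the abstract stabilizer $\U(S,\cP)_\s$. The groups $\prod_{\g\in\s}\tau_\g^{\ZZ}$ and $\hU(S\ssm\s,\cP)$ are, by the conventions of the paper, the profinite completions of $\prod_{\g\in\s}\tau_\g^{\Z}$ and of $\U(S\ssm\s,\cP)$; here $(S\ssm\s,\u,\cP)$ is again a (possibly disconnected) hyperbolic marked hyperelliptic surface, so, via the decompositions~(\ref{wreathPMCG}) and~(\ref{PHMCG}) and Theorem~\ref{conghyp}, these completions are legitimate and also carry a congruence interpretation.

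Concretely, I would first apply the (right-exact) profinite completion functor to the normal subgroup sequence
\[1\to\prod_{\g\in\s}\tau_\g^{\Z}\to\U(S,\cP)_\s\to\G'\to 1\]
extracted from~(\ref{stabHyp}), where $\G'$ denotes the finite-index image of $\U(S,\cP)_\s$ in $\U(S\ssm\s,\cP)$. Right-exactness then shows that the kernel of the induced map $\hU(S,\cP)_\s\to\widehat{\G'}$ is the closure of $\prod_{\g\in\s}\tau_\g^{\Z}$, hence a continuous quotient of $\widehat{\prod_{\g\in\s}\tau_\g^{\Z}}=\prod_{\g\in\s}\tau_\g^{\ZZ}$, while $\widehat{\G'}$ is an open subgroup of $\hU(S\ssm\s,\cP)$. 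The remaining content of the corollary is thus the \emph{left} exactness of the completed sequence, i.e.\ the injectivity of $\prod_{\g\in\s}\tau_\g^{\ZZ}\hookrightarrow\hU(S,\cP)_\s$; equivalently, that the profinite topology of $\U(S,\cP)_\s$ induces on the free abelian multitwist subgroup $\prod_{\g\in\s}\tau_\g^{\Z}$ its full profinite topology.

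This last point is the real obstacle, and I would dispose of it as in the proof of Theorem~\ref{stabilizerprohyp}(ii): by Theorem~\ref{conghyp} the profinite topology of $\U(S,\cP)$ is its congruence topology, which by definition is the restriction of the congruence topology of $\G(S,\cP)$, so it is enough to know that the latter induces the full profinite topology on each $\prod_{\g\in\s}\tau_\g^{\Z}$. This is precisely the description of stabilizers in the procongruence mapping class group provided by Theorem~4.5 in \cite{B1} for $\cP=\emptyset$, combined with the Birman short exact sequence for procongruence mapping class groups (Corollary~4.7 in \cite{congtop}) to bring in the marked points: together these exhibit $\prod_{\g\in\s}\tau_\g^{\ZZ}$ inside the procongruence stabilizer as the profinite completion of $\prod_{\g\in\s}\tau_\g^{\Z}$. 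Granting this, $\prod_{\g\in\s}\tau_\g^{\Z}$ is closed, with its full profinite topology, in $\U(S,\cP)_\s$, the completed sequence is exact on the left as well, and substituting the identifications $\widehat{\U(S,\cP)_\s}\cong\hU(S,\cP)_\s$, $\widehat{\prod_{\g\in\s}\tau_\g^{\Z}}\cong\prod_{\g\in\s}\tau_\g^{\ZZ}$ and $\widehat{\U(S\ssm\s,\cP)}\cong\hU(S\ssm\s,\cP)$ yields~(\ref{stabproHyp}) together with all the asserted identifications of the terms with profinite completions.
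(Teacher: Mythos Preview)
Your proposal is correct and follows essentially the same route as the paper: right exactness of~(\ref{stabproHyp}) is obtained from Theorem~\ref{stabilizerprohyp} (identifying $\hU(S,\cP)_\s$ with the profinite completion of $\U(S,\cP)_\s$), and left exactness from Theorem~4.5 in \cite{B1}. Your write-up is simply more explicit about the intermediate reductions (the passage through the congruence topology via Theorem~\ref{conghyp}, and the use of Corollary~4.7 in \cite{congtop} to handle marked points), which the paper's two-line proof leaves implicit.
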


\begin{proof}Right exactness of the sequence~(\ref{stabproHyp}) follows from Theorem~\ref{stabilizerprohyp} and left exactness 
from Theorem~4.5 in \cite{B1}.
\end{proof}

\begin{corollary}\label{stabilizerprohyp2} For a simplex $\s\in\hC(S,\u,\cP)_\bt$, let $\s$ also denote its image in $\kC(S\ssm\cP)$.
Then, for $\hU^\l$ an open subgroup of $\hU(S,\cP)$, we have $\hU_\s^\l=\hU^\l\cap\kG(S,\cP)_\s$, where we have identified $\hU(S,\cP)$
with its image in the procongruence mapping class group $\kG(S,\cP)$.
\end{corollary}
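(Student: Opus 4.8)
The plan is to deduce the statement from the injectivity of the natural comparison map between the two curve complexes. By the universal property of the $\hU(S,\cP)$-completion together with Theorem~\ref{conghyp}, which identifies $\hU(S,\cP)$ with the closed subgroup $\kU(S,\cP)$ of $\kG(S,\cP)$, the $\hU(S,\cP)$-equivariant chain of inclusions $C(S,\u,\cP)_\bt\subseteq C(S\ssm\cP)_\bt\subseteq\kC(S\ssm\cP)_\bt$ (the last one by Proposition~5.1 in \cite{PFT}) extends uniquely to an $\hU(S,\cP)$-equivariant continuous map of simplicial profinite sets $\phi\co\hC(S,\u,\cP)_\bt\to\kC(S\ssm\cP)_\bt$. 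For $\s\in\hC(S,\u,\cP)_\bt$ and $U$ open in $\hU(S,\cP)$, the equivariance of $\phi$ immediately gives $U_\s\subseteq U\cap\kG(S,\cP)_\s$; conversely, if $\phi$ is injective then $g\cdot\phi(\s)=\phi(\s)$ forces $\phi(g\s)=\phi(\s)$ and hence $g\s=\s$, so that $U\cap\kG(S,\cP)_\s\subseteq U_\s$. Thus the corollary is equivalent to the assertion that $\phi$ is an embedding.

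First I would reduce the injectivity of $\phi$ to each set of $n$-simplices. Since $C(S,\u,\cP)_n$ is dense in the profinite set $\hC(S,\u,\cP)_n$ and injects into $\kC(S\ssm\cP)_n$, the map $\phi_n$ is injective if and only if the profinite topology on $C(S,\u,\cP)_n$ defining the completion — which, by Theorem~\ref{conghyp}, coincides with the congruence topology and hence has the $(\U(S,\cP)\cap\G^\m)$-orbits, $\G^\m$ ranging over the geometric levels of $\G(S,\cP)$, as a basis — agrees with the topology induced from $\kC(S\ssm\cP)_n$. Via the descriptions of $\hC(S,\u,\cP)_\bt$ and $\kC(S\ssm\cP)_\bt$ as inverse limits of the nerves of the DM boundaries of the level structures over $\cM(S,\u,\cP)$, respectively $\cM(S\ssm\cP)$, this amounts to proving: for each symmetric $n$-multicurve $\s$ and each geometric level $\G^\m$, there is a geometric level $\G^{\m'}\leq\G^\m$ such that every symmetric $n$-multicurve in the $\G^{\m'}$-orbit of $\s$ already lies in the $(\U(S,\cP)\cap\G^\m)$-orbit of $\s$.

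This separation property is the main obstacle, and I would establish it by comparing the stabilizers of a rational simplex on the two sides. For $\s\in C(S,\u,\cP)_\bt$, Theorem~\ref{stabilizerprohyp} and Corollary~\ref{stabilizerprohypdescription} present $\hU(S,\cP)_\s$, via the short exact sequence~(\ref{stabproHyp}), as the profinite completion of $\U(S,\cP)_\s$ and as an extension of an open subgroup of $\hU(S\ssm\s,\cP)$ by the inertia group $\prod_{\g\in\s}\tau_\g^{\ZZ}$; the procongruence analogue (Theorem~4.5 in \cite{B1} together with the procongruence Birman exact sequence, Corollary~4.7 in \cite{congtop}) presents $\kG(S,\cP)_\s$ as the corresponding extension of an open subgroup of $\kG(S\ssm\s,\cP)$ by the same inertia group. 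Both stabilizers contain $\prod_{\g\in\s}\tau_\g^{\ZZ}$, so modulo it the comparison takes place on the disconnected hyperbolic hyperelliptic surface $S\ssm\s$, where Theorem~\ref{conghyp} — extended to disconnected surfaces via the isomorphisms~(\ref{wreathHMCG2}) and~(\ref{PHMCG2}) — identifies $\hU(S\ssm\s,\cP)$ with the closed subgroup $\kU(S\ssm\s,\cP)$ of $\kG(S\ssm\s,\cP)$, and where the image of $\U(S,\cP)_\s$ in $\G(S\ssm\s,\cP)$ is exactly the intersection of $\U(S\ssm\s,\cP)$ with the image of $\G(S,\cP)_\s$, a finite index subgroup of $\G(S\ssm\s,\cP)$ which is congruence-closed since the glueing data encoded by the dual graph of $\s$ is detected on a finite quotient of $\pi_1(S\ssm\s)$. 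Combining these identifications yields $\hU(S,\cP)_\s=\hU(S,\cP)\cap\kG(S,\cP)_\s$, so that the $\hU(S,\cP)$-orbit of a rational simplex injects into its $\kG(S,\cP)$-orbit in $\kC(S\ssm\cP)_\bt$; since $\hC(S,\u,\cP)_n$ is a finite union of such orbits and the genus~$0$ congruence subgroup property (Theorem~1 in \cite{Asada}), through Birman--Hilden theory, separates already at finite level the finitely many $\U(S,\cP)$-orbits of symmetric multicurves contained in one $\G(S\ssm\cP)$-orbit, it follows that $\phi_n$ is injective. Hence $\phi$ is an embedding, and the corollary follows for arbitrary $\s$ and $U$ by the argument of the first paragraph.
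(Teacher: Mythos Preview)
Your core computation is right and matches the paper's: for a \emph{rational} simplex $\s\in C(S,\u,\cP)_\bt$ you compare the short exact sequence~(\ref{stabproHyp}) for $\hU(S,\cP)_\s$ with the analogous description of $\kG(S,\cP)_\s$ and conclude $\hU(S,\cP)_\s=\hU(S,\cP)\cap\kG(S,\cP)_\s$. The paper does exactly this, citing Corollary~\ref{stabilizerprohypdescription} and Theorem~4.9 in \cite{BF} (your Theorem~4.5 in \cite{B1} plus the procongruence Birman sequence amount to the same input).

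Where you diverge is in the passage from rational to arbitrary $\s$. You declare the corollary \emph{equivalent} to injectivity of $\phi$ and then try to prove injectivity, which forces you to separate the finitely many $\U(S,\cP)$-orbits of symmetric multicurves that lie in a single $\G(S\ssm\cP)$-orbit. Your justification of that separation (``genus~$0$ congruence subgroup property through Birman--Hilden'') is not an argument; it is not clear what map you are applying Asada's theorem to, nor why it distinguishes $\hU(S,\cP)$-orbits inside a $\kG(S,\cP)$-orbit of $\kC(S\ssm\cP)_\bt$. This step is a genuine gap.

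More to the point, the detour through injectivity is unnecessary. Since $\hC(S,\u,\cP)_n/\hU(S,\cP)\cong C(S,\u,\cP)_n/\U(S,\cP)$, every simplex $\s\in\hC(S,\u,\cP)_\bt$ can be written as $\s=h\s_0$ with $h\in\hU(S,\cP)$ and $\s_0$ rational. Then, using only equivariance of $\phi$ and the rational case you already proved,
\[
\hU(S,\cP)\cap\kG(S,\cP)_{\phi(\s)}=h\bigl(\hU(S,\cP)\cap\kG(S,\cP)_{\phi(\s_0)}\bigr)h^{-1}=h\,\hU(S,\cP)_{\s_0}\,h^{-1}=\hU(S,\cP)_\s,
\]
and intersecting with $U$ gives the statement. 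This is the paper's two-line proof. Note that in the paper's logical order, injectivity of $\phi$ (Corollary~\ref{injectivity}) is a \emph{consequence} of the present corollary, not an ingredient in its proof; your reversal of that order is what creates the extra difficulty.
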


\begin{proof}It is not restrictive to assume that $\hU^\l=\hU(S,\cP)$ and that $\s$ is in the image of $C(S,\u,\cP)_\bt$. 
The conclusion then follows from Corollary~\ref{stabilizerprohypdescription} and 
Theorem~4.9 in \cite{BF}.
\end{proof}

Corollary~\ref{stabilizerprohyp2} implies, in particular, the claim we made at the beginning of this section:

\begin{corollary}\label{injectivity}For a hyperbolic marked hyperelliptic surface $(S,\u,\cP)$, the natural map 
$\hC(S,\u,\cP)_\bt\to \kC(S\ssm\cP)_\bt$ is injective.
\end{corollary}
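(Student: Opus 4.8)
The plan is to exploit the orbit structure of the two simplicial profinite sets. In each degree $k$, the set of $k$-simplices of $\hC(S,\u,\cP)_\bt$ carries a continuous action of $\hU(S,\cP)$ with finitely many orbits, each of which is the closure of the $\U(S,\cP)$-orbit of a geometric simplex $\s\in C(S,\u,\cP)_\bt$; since distinct $\U(S,\cP)$-orbits of symmetric multicurves are already distinguished by every finite-index subgroup of $\U(S,\cP)$, these closures are pairwise disjoint, hence clopen, and the orbit of $\s$ is $\hU(S,\cP)$-equivariantly identified with the profinite coset space $\hU(S,\cP)/\hU(S,\cP)_\s$. The map $\psi\co\hC(S,\u,\cP)_\bt\to\kC(S\ssm\cP)_\bt$ is $\hU(S,\cP)$-equivariant (through $\hU(S,\cP)\hookrightarrow\kG(S,\cP)$), it is injective on the dense subset $C(S,\u,\cP)_\bt$ (which sits inside $C(S\ssm\cP)_\bt\subseteq\kC(S\ssm\cP)_\bt$), and it carries the orbit of $\s$ onto the $\hU(S,\cP)$-orbit of $\s$ inside $\kC(S\ssm\cP)_\bt$.

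Because $\psi$ restricted to the orbit of $\s$ is the natural map $\hU(S,\cP)/\hU(S,\cP)_\s\to\hU(S,\cP)/\bigl(\hU(S,\cP)\cap\kG(S,\cP)_\s\bigr)$, the main geometric input — Corollary~\ref{stabilizerprohyp2}, which says $\hU(S,\cP)_\s=\hU(S,\cP)\cap\kG(S,\cP)_\s$ — yields at once that $\psi$ is injective on each $\hU(S,\cP)$-orbit of $\hC(S,\u,\cP)_\bt$. It then remains only to show that $\psi$ separates distinct orbits, that is, that the induced map on orbit sets is injective: two geometric symmetric multicurves lying in distinct $\U(S,\cP)$-orbits must lie in distinct $\hU(S,\cP)$-orbits of $\kC(S\ssm\cP)_\bt$.

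This last point is where the congruence subgroup property (Theorem~\ref{conghyp}) enters. Suppose $\bar g\in\hU(S,\cP)$ satisfies $\bar g\s=\s'$ in $\kC(S\ssm\cP)_\bt$ for geometric symmetric $\s,\s'$. The embedding $C(S\ssm\cP)_\bt\subseteq\kC(S\ssm\cP)_\bt$ and the structure of the procongruence curve complex (Proposition~6.5 in \cite{PFT}) force $\s'=h\s$ for some $h\in\G(S,\cP)$ with $h^{-1}\bar g\in\kG(S,\cP)_\s$; since $\kG(S,\cP)_\s$ is the profinite completion of $\G(S,\cP)_\s$ and, by Theorem~\ref{conghyp}, $\hU(S,\cP)=\kU(S,\cP)$ is the closure of $\U(S,\cP)$ inside $\kG(S,\cP)$, one gets $h\in\kU(S,\cP)\cdot\kG(S,\cP)_\s$. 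The product $\kU(S,\cP)\cdot\kG(S,\cP)_\s$ is compact, hence closed in $\kG(S,\cP)$, so it meets $\G(S,\cP)$ exactly in the closure of $\U(S,\cP)\cdot\G(S,\cP)_\s$ for the congruence topology.

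The remaining and principal obstacle is therefore the separability statement that $\U(S,\cP)\cdot\G(S,\cP)_\s$ is already closed in $\G(S,\cP)$ for the congruence topology — equivalently, that the $\U(S,\cP)$-orbit of $\s$ is closed inside the $\G(S,\cP)$-orbit of $\s$. I would deduce this from the explicit descriptions of $\kG(S,\cP)_\s$ and $\hU(S\ssm\s,\cP)$ provided by the exact sequences~\eqref{stabproHyp}, \eqref{stabPHyp} and~\eqref{stabHyp}, together with Theorem~4.5 in \cite{B1}, which reduce the claim to the analogous, and much more elementary, statement at the level of the mapping class and hyperelliptic mapping class groups of the cut surface $S\ssm\s$. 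Granting this, $h\in\U(S,\cP)\cdot\G(S,\cP)_\s$, so $\s'=h\s$ lies in the $\U(S,\cP)$-orbit of $\s$, contradicting the assumption; hence $\psi$ separates orbits and, combined with the within-orbit injectivity from Corollary~\ref{stabilizerprohyp2}, is injective.
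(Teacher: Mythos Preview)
Your argument follows the same route as the paper's: the paper simply states that the corollary is a consequence of Corollary~\ref{stabilizerprohyp2}, and your within-orbit analysis is exactly the unpacking of that implication. Where you go beyond the paper is in isolating the orbit-separation step --- that distinct $\U(S,\cP)$-orbits of geometric symmetric simplices must land in distinct $\hU(S,\cP)$-orbits of $\kC(S\ssm\cP)_\bt$ --- which the paper's one-line proof does not mention. You are right that this is a genuine point: Corollary~\ref{stabilizerprohyp2} gives the bijection of each source orbit onto its image, but two such images could in principle coincide, and that would force $\psi$ to fail to be injective.

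The gap in your write-up is in the resolution of this step. You reduce orbit separation to the claim that $\U(S,\cP)\cdot\G(S,\cP)_\s$ is closed in $\G(S,\cP)$ for the congruence topology, and then assert that the exact sequences~\eqref{stabHyp}, \eqref{stabPHyp}, \eqref{stabproHyp} together with Theorem~4.5 of \cite{B1} reduce this to an ``elementary'' statement about $S\ssm\s$. But this reduction is not carried out, and it is not clear what the statement on the cut surface is or why it is elementary: note that $\U(S,\cP)$ is not contained in $\G(S,\cP)_\s$, so passing to the cut surface does not straightforwardly linearise the double-coset problem $\U(S,\cP)\backslash\G(S,\cP)/\G(S,\cP)_\s$. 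What one actually needs is that two symmetric multicurves $\s,\s'$ lying in the same $\G(S,\cP)$-orbit and in the same $\hU(S,\cP)$-orbit inside $\kC(S\ssm\cP)_\bt$ are already in the same $\U(S,\cP)$-orbit; this is a statement about how the topological type of the pair $(S\ssm\s,\u)$ is detected by congruence quotients, and it deserves a direct argument rather than an appeal to the stabilizer exact sequences. As written, your proof is more detailed than the paper's but still leaves this point open.
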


As done for the procongruence curve complex, we can now give a more intrinsic description of the profinite symmetric curve 
complex $\hC(S,\u,\cP)_\bt$. Let us define the set of \emph{symmetric profinite $k$-multicurves}
on $S$ as the closure of the set $C(S,\u,\cP)_{k-1}$ inside the profinite set $\cP_k(\hL(S\ssm\cP))$, for $k>0$.

\begin{definition}\label{geoprohyp}Let $L(\hP(S\ssm\cP),\u)$ be the abstract simplicial profinite complex whose $k$-simplices are the symmetric 
profinite $k$-multicurves on $S$. The abstract simplicial profinite complex $L(\hP(S\ssm\cP),\u)$ is called the \emph{complex of symmetric 
profinite curves on $S$}.
\end{definition}

By Corollary~\ref{injectivity} and the isomorphism~(\ref{realizationprof}), for a suitable order of the vertex set of $L(\hP(S\ssm\cP),\u)$, 
there is then a natural isomorphism of simplicial profinite sets:
\begin{equation}\label{realizationprofsym}
\hC(S,\u,\cP)_\bt\cong L(\hP(S\ssm\cP),\u)_\bt.
\end{equation}

\subsection{Centralizers of symmetric profinite multitwists and the center of the profinite hyperelliptic mapping class group}
There is a natural $\G(S\ssm\cP)$-equivariant map $\Tau\co C(S\ssm\cP)\to\G(S\ssm\cP)$ which assigns to a multicurve $\s$ 
the multitwist $\Tau(\s):=\prod_{\g\in\s}\tau_\g$.
By the universal property of the $\kG(S\ssm\cP)$-completion, this map then extends to a continuous $\kG(S\ssm\cP)$-equivariant map:
\[\check{\Tau}\co L(\hP(S\ssm\cP))\to\kG(S\ssm\cP),\]
which assigns to a profinite multicurve $\s$ the procongruence multitwist $\check{\Tau}(\s):=\prod_{\g\in\s}\tau_\g$.
It is clear that this map restricts to a continuous $\hU(S,\cP)$-equivariant map:
\[\wh{\Tau}\co L(\hP(S\ssm\cP),\u)\to\hU(S,\cP),\]
which assigns to a symmetric profinite multicurve $\s$ the \emph{symmetric profinite multitwist} $\wh{\Tau}(\s):=\prod_{\g\in\s}\tau_\g$.
Thanks to Corollary~\ref{stabilizerprohyp2}, we can determine the centralizer and the normalizer in $\hU(S,\cP)$ of such an element:

\begin{corollary}\label{centprosymult}For $\s\in L(\hP(S\ssm\cP),\u)$ and all $k\in\N^+$, we have:
\[Z_{\hU(S,\cP)}(\wh{\Tau}(\s)^k)=N_{\hU(S,\cP)}(\langle\wh{\Tau}(\s)^k\rangle)=N_{\hU(S,\cP)}(\langle\tau_\g^k\,|\,\,\g\in\s\rangle)=\hU(S,\cP)_\s,\]
where by $\langle A\rangle$ we denote the closed subgroup generated by a set $A$ of elements in a profinite group and
$\hU(S,\cP)_\s$ is the stabilizer described in Corollary~\ref{stabilizerprohypdescription}. In particular, the map $\wh{\Tau}$ defined above is injective.
\end{corollary}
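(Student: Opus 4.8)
The plan is to show that the four groups in the statement coincide by running a cycle of inclusions: the easy inclusions use only the equivariance of $\wh{\Tau}$, while the two nontrivial reverse inclusions are obtained by intersecting with $\hU(S,\cP)$ the analogous centralizer and normalizer identities for the procongruence mapping class group, which are already available. Injectivity of $\wh{\Tau}$ is then a formal consequence.

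First I would record the inclusions that hold for free. The profinite curves forming $\s$ being pairwise disjoint, the twists $\tau_\g$ with $\g\in\s$ pairwise commute, so $\wh{\Tau}(\s)^k=\prod_{\g\in\s}\tau_\g^k$. An element $h$ of the stabilizer $\hU(S,\cP)_\s$ permutes the vertices of $\s$, the action being simplicial, hence, by the $\hU(S,\cP)$-equivariance of $\wh{\Tau}$ — which on $0$-simplices reads $h\tau_\g h^{-1}=\tau_{h(\g)}$ — conjugation by $h$ permutes $\{\tau_\g^k\,|\,\g\in\s\}$, and, since $h\cdot\s=\s$, centralizes $\wh{\Tau}(\s)$ and hence $\wh{\Tau}(\s)^k$. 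This gives
\[\hU(S,\cP)_\s\subseteq Z_{\hU(S,\cP)}(\wh{\Tau}(\s)^k)\subseteq N_{\hU(S,\cP)}(\langle\wh{\Tau}(\s)^k\rangle)\]
(the second inclusion because centralizing an element forces normalizing the closed cyclic subgroup it generates) together with
\[\hU(S,\cP)_\s\subseteq N_{\hU(S,\cP)}(\langle\tau_\g^k\,|\,\g\in\s\rangle).\]

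The substance of the proof is the two reverse inclusions $N_{\hU(S,\cP)}(\langle\wh{\Tau}(\s)^k\rangle)\subseteq\hU(S,\cP)_\s$ and $N_{\hU(S,\cP)}(\langle\tau_\g^k\,|\,\g\in\s\rangle)\subseteq\hU(S,\cP)_\s$. Using the congruence subgroup property (Theorem~\ref{conghyp}) I would view $\hU(S,\cP)$ as a subgroup of $\kG(S,\cP)$ and, via Corollary~\ref{injectivity}, view $\s$ as a simplex of $\kC(S\ssm\cP)$, under which identifications $\wh{\Tau}(\s)=\check{\Tau}(\s)$. For any subgroup $A$ of $\kG(S,\cP)$ one has tautologically $N_{\hU(S,\cP)}(A)=\hU(S,\cP)\cap N_{\kG(S,\cP)}(A)$ and $Z_{\hU(S,\cP)}(A)=\hU(S,\cP)\cap Z_{\kG(S,\cP)}(A)$. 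The corresponding theory for the procongruence mapping class group (cf.\ Theorem~4.9 in \cite{BF}; for $\cP\neq\emptyset$, if not covered directly, one reduces to $\cP=\emptyset$ through the procongruence Birman exact sequence, Corollary~4.7 in \cite{congtop}, exactly as in the proof of Corollary~\ref{stabilizerprohyp2}) gives
\[Z_{\kG(S,\cP)}(\check{\Tau}(\s)^k)=N_{\kG(S,\cP)}(\langle\check{\Tau}(\s)^k\rangle)=N_{\kG(S,\cP)}(\langle\tau_\g^k\,|\,\g\in\s\rangle)=\kG(S,\cP)_\s,\]
so each of the four groups in the statement is contained in $\hU(S,\cP)\cap\kG(S,\cP)_\s$; and by Corollary~\ref{stabilizerprohyp2} applied with $U=\hU(S,\cP)$ this intersection equals $\hU(S,\cP)_\s$. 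Combined with the inclusions of the previous paragraph, this closes the cycle and proves all four equalities.

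Finally, $\wh{\Tau}$ is the restriction of $\check{\Tau}\co L(\hP(S\ssm\cP))\to\kG(S\ssm\cP)$ to the subcomplex $L(\hP(S\ssm\cP),\u)\subseteq L(\hP(S\ssm\cP))$ — the inclusion being Corollary~\ref{injectivity} together with the isomorphisms~(\ref{realizationprof}) and~(\ref{realizationprofsym}) — and $\check{\Tau}$ is injective, this being the procongruence counterpart of the present corollary (cf.\ Theorem~4.9 in \cite{BF}); hence $\wh{\Tau}$ is injective. I expect the main obstacle to be essentially bibliographic: having the procongruence centralizer/normalizer identity on record in exactly the generality invoked here (arbitrary profinite symmetric multicurves, marked surfaces, arbitrary $k\in\N^+$). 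Everything else is formal, resting on equivariance of $\wh{\Tau}$ and on the already-established Corollaries~\ref{stabilizerprohypdescription} and~\ref{stabilizerprohyp2}.
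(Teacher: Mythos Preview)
Your proposal is correct and follows essentially the same approach as the paper: identify $\hU(S,\cP)$ with its image in $\kG(S,\cP)$, use the tautological identity that centralizers/normalizers in the subgroup are the intersections with $\hU(S,\cP)$ of the corresponding groups in $\kG(S,\cP)$, invoke the known procongruence centralizer/normalizer result, and apply Corollary~\ref{stabilizerprohyp2}. The paper's citation for the procongruence identity is Corollary~4.11 in \cite{BF} (with Corollary~4.3 in \cite{congtop} as an alternative), confirming your expectation that the only substantive point is bibliographic; your preliminary ``easy inclusions'' paragraph is harmless but unnecessary, since the procongruence equalities give all four terms at once upon intersection.
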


\begin{proof}Let us identify $\hU(S,\cP)$ with its image in the procongruence mapping class group $\kG(S,\cP)$. It is clear that we have
$Z_{\hU(S,\cP)}(\wh{\Tau}(\s)^k)=Z_{\kG(S,\cP)}(\wh{\Tau}(\s)^k)\cap\hU(S,\cP)$ and the same holds for the normalizers appearing in
the statement of the corollary. But then the conclusion is an immediate consequence of Corollary~\ref{stabilizerprohyp2} and 
Corollary~4.11 in \cite{BF} (see also Corollary~4.3 in \cite{congtop}).
\end{proof}

We can now determine the centralizer of an open subgroup of the profinite hyperelliptic mapping class group:

\begin{theorem}\label{centerhyp}Let $(S,\u,\cP)$ be a marked hyperelliptic surface such that $S$ is hyperbolic and $n(\ring{S}_{/\u}\ssm\cB_\u)>4$. 
Then, for every open subgroup $\hU^\l$ of $\hU(S,\cP)$, the centralizer of $\hU^\l$ in $\hU(S,\cP)$ is generated by the hyperelliptic involution $\u$ if 
$\cP=\emptyset$ and is trivial if $\cP\neq\emptyset$. 
\end{theorem}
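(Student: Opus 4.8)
The plan is to prove the stronger statement that $Z_{\hU(S,\cP)}(\hU)=Z_{\hU(S,\cP)}(\hPU(S,\cP))$ for every open subgroup $\hU$, and then to compute this last centralizer. First I would reduce to the case $\dS=\emptyset$: restriction of diffeomorphisms identifies $\U(S,\cP)$ with a finite index subgroup of $\U(\ring{S},\cP)$, so $\hU$ is also open in $\hU(\ring{S},\cP)$, and since the hyperelliptic involution of $\ring{S}$ restricts to an element of $\U(S,\cP)$ preserving the boundary/puncture partition, the result for $\ring{S}$ implies it for $S$. So assume $\dS=\emptyset$.

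The key step is to show that any $z\in Z_{\hU(S,\cP)}(\hU)$ already centralizes the open subgroup $\hPU(S,\cP)$. Fix a symmetric simple closed curve $\g$ on $S\ssm\cP$. The closed subgroup $\tau_\g^{\ZZ}$ is procyclic, so $\tau_\g^{\ZZ}\cap\hU$ is open in it and contains $\tau_\g^k$ for some $k\in\N^+$; as $z$ centralizes $\hU$ it centralizes $\tau_\g^k$, and Corollary~\ref{centprosymult} with $\s=\{\g\}$ gives $z\in\hU(S,\cP)_{\{\g\}}$. In particular $z$ fixes the vertex $\g$, hence $z\tau_\g z^{-1}=\tau_{z(\g)}=\tau_\g$. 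Applying the same argument to a symmetric multicurve $\s$ — noting that a suitable power $\wh{\Tau}(\s)^k=\prod_{\g\in\s}\tau_\g^k$ lies in $\hU$, being a product of powers of twists that individually do — shows $z\in\hU(S,\cP)_\s$, so for $\s=\{\g,\g'\}$ a symmetric pair the element $z$ centralizes the bitwist $\tau_\g\tau_{\g'}$. By Theorem~\ref{generationpure}, $\PU(S,\cP)$ is generated by squares of nonseparating symmetric Dehn twists, by Dehn twists about symmetric separating curves bounding unmarked discs, by symmetric bitwists, and, when $g(S)=1$, by a lift $\td\u$ of the hyperelliptic involution; all of these are centralized by $z$, the genus one generator being handled by the chain relation of Lemma~\ref{squares} that expresses $\td\u$ through symmetric twists. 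Since $\PU(S,\cP)$ is dense in $\hPU(S,\cP)$, this gives $Z_{\hU(S,\cP)}(\hU)\subseteq Z_{\hU(S,\cP)}(\hPU(S,\cP))$.

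It then remains to compute $C:=Z_{\hU(S,\cP)}(\hPU(S,\cP))$, which is closed and normal in $\hU(S,\cP)$ with $C\cap\hPU(S,\cP)=Z(\hPU(S,\cP))$. Iterating the profinite Birman sequences~(\ref{BirmanExactPro}) and using the isomorphism~(\ref{PHypEx}) at the bottom presents $\hPU(S,\cP)$ as an extension, central by $\langle\u\rangle\cap\PU(S,\cP)$, of an iterated extension of centerless profinite surface groups over $\hPG(\ring{S}_{/\u}\ssm\cB_\u)$; by hypothesis the latter is the profinite pure mapping class group of a genus $0$ surface with more than four punctures, which — by the classical facts on genus $0$ mapping class groups together with the congruence subgroup property (Theorem~\ref{conghyp}) and goodness — is centerless and has no nontrivial finite normal subgroup, as does its profinite full mapping class group. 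One deduces that $Z(\hPU(S,\cP))=\langle\u\rangle\cap\PU(S,\cP)$, that $C$ is a finite extension of this group, and that the only finite normal subgroups of $\hU(S,\cP)$ are $\{1\}$ and, when $\cP=\emptyset$, $\langle\u\rangle$ — any such subgroup injecting via the forgetful map (with torsion-free kernel, by~(\ref{BirmanExactPro})) into $\hU(S)$, where the hyperelliptic involution is the only nontrivial one and is normal only in the absence of a Birman kernel on which it acts nontrivially. Hence $C=\langle\u\rangle$ if $\cP=\emptyset$ and $C=\{1\}$ if $\cP\neq\emptyset$. Finally, as $\u$ is central in $\U(S)$ hence in $\hU(S)$, one has $\langle\u\rangle\subseteq Z_{\hU(S,\cP)}(\hU)$ when $\cP=\emptyset$, and combining with the inclusion of the previous paragraph yields the stated equality in both cases.

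The main obstacle I anticipate is this last paragraph: carrying out the reduction of $\hPU(S,\cP)$ to a genus $0$ profinite mapping class group through the profinite Birman sequences while keeping precise track of the central copy of $\langle\u\rangle$ — which, depending on the number of symmetric punctures and on whether $\cP$ is empty, may or may not already sit inside $\PU$ — and invoking the correct inputs (centerlessness and absence of finite normal subgroups for profinite genus $0$ mapping class groups with at least five punctures), which rest on the congruence subgroup property and goodness established earlier. The genus one case and the behaviour of lifts of $\u$ to marked configurations are the secondary delicate points.
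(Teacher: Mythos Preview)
Your strategy—first showing that any $z\in Z_{\hU(S,\cP)}(\hU)$ already centralizes all of $\hPU(S,\cP)$, and then computing that larger centralizer via the Birman--Hilden and Birman sequences—is genuinely different from the paper's route. The paper instead inducts on $\sharp\cP$: the case $\cP=\emptyset$ is handled directly from the sequence~(\ref{HypEx}) together with the known fact (Theorem~4.13 in \cite{BF}) that open subgroups of profinite genus~$0$ mapping class groups with $>4$ punctures have trivial centralizer; the case $\sharp\cP=1$ uses the profinite Birman sequence and the classification of order-$2$ elements in $\hp_1(S,P)\cdot\langle\td\u\rangle$ from \cite{Magnus}; and for $\sharp\cP\geq 2$ one picks a curve $\g$ bounding a disc with two marked points, observes via Corollary~\ref{centprosymult} that $z$ lies in the stabilizer $\hU(S,\cP)_\g$, and uses the description of that stabilizer to reduce to a surface with strictly fewer marked points. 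No generating set for $\PU(S,\cP)$ is ever invoked.

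There is a real gap in your step~2. Theorem~\ref{generationpure} applies only when $g(S)\geq 2$, or when $g(S)=1$ and $\sharp\cW_\circ(S)\geq 1$; but the hypotheses of Theorem~\ref{centerhyp} allow $g(S)=0$, and also $g(S)=1$ with $\cW_\circ(S)=\emptyset$ (for instance $S=S_{1,2}$ with a single \emph{symmetric pair} of punctures and four Weierstrass \emph{points}, which gives $n(S_{/\u}\ssm\cB_\u)=5$). In those cases you have no generating set on which to run your argument. Even when Theorem~\ref{generationpure}(ii) does apply, your handling of the extra generator $\td\u$ is off: Lemma~\ref{squares} gives no chain relation in genus~$1$—item~(i) says precisely that $\u\notin\cQ(S_{1,1},\u)$—and while $(\tau_a\tau_b)^3=\u$ in $\G(S_{1,1})$ does express $\u$ through symmetric twists, it is not immediate that a chosen order-$2$ lift $\td\u\in\PU(S,\cP)$ admits such an expression through symmetric twists on $S\ssm\cP$.

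Your step~3 is plausible as outlined, but observe that its crux—no lift of $\u$ to $\hU(S,\cP)$ can centralize the Birman kernel $\hp_1(S\ssm\cP',Q)$—is exactly the delicate point the paper isolates in its $\sharp\cP=1$ case, and it requires either the self-centralizing property from \cite{Magnus} or the injectivity of $\hU(S,\cP')\hookra\Out(\hp_1(S\ssm\cP',Q))$ supplied by Theorem~\ref{conghyp}. So even if step~2 were repaired, your route rearranges rather than avoids the hard inputs, while adding a dependence on generation results that the paper's induction does not need.
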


\begin{proof}Let us consider first the case $\cP=\emptyset$. Since $S$ is hyperbolic, by the exact sequence~(\ref{HypEx}), there is an exact sequence:
\begin{equation}\label{HypExPro}
1\to\langle\u\rangle\to\hU(S)\to\hG(S_{/\u},\cB_\u)_{p_\u}\to 1,
\end{equation}
where the group $\hG(S_{/\u},\cB_\u)_{p_\u}$ identifies with an open subgroup of $\hG(\ring{S}_{/\u}\ssm\cB_\u)$.

For $n(\ring{S}_{/\u}\ssm\cB_\u)>4$, it is well known that the centralizer in $\hG(\ring{S}_{/\u}\ssm\cB_\u)$ of an open subgroup
is trivial (cf.\ (i) of Theorem~4.14 in \cite{BF}). This then implies the claim of the theorem for $\cP=\emptyset$.

The next case to deal with is when $\cP=\{P\}$ is a single point. Let us consider the profinite Birman short exact sequence:
\[1\to\hp_1(S,P)\to\hU(S,P)\sr{\hat{f}_P}{\to}\hU(S)\to 1.\]
Since, as it is well known, the centralizer in $\hp_1(S,P)$ of the open subgroup $\hU^\l\cap\hp_1(S,P)$ is trivial, the intersection 
$Z_{\hU(S,P)}(\hU^\l)\cap\hp_1(S,P)$ is also trivial. Therefore, the centralizer $Z_{\hU(S,P)}(\hU^\l)$ identifies with a subgroup of 
the centralizer $Z_{\hU(S)}(\hat{f}_P(\hU^\l))$.

By the case $\cP=\emptyset$ of the theorem, $Z_{\hU(S)}(\hat{f}_P(\hU^\l))$ is generated by the hyperelliptic involution $\u$.
Let then $\td\u\in\U(S,P)$ be a hyperelliptic involution which lifts $\u$. By (ii) of Lemma~3.1 in \cite{Magnus}, the only elements of order $2$
in the profinite group $\hp_1(S,P)\cdot\langle\td\u\rangle$ are the conjugates of $\td\u$.
Thus, either $Z_{\hU(S,P)}(\hU^\l)$ is trivial or is generated by a conjugate of $\td\u$.
Now, by (i) of Lemma~3.1 in \cite{Magnus}, the hyperelliptic involution $\td\u$, and so its conjugates, are self-centralizing in the group 
$\hp_1(S,P)\cdot\langle\td\u\rangle$. Therefore, $Z_{\hU(S,P)}(\hU^\l)$ is trivial.

For $\sharp\, \cP\geq 2$, fix a point $Q\in\cP$ and let $\cP':=\cP\ssm Q$. As a first step, let us show that, 
for every open subgroup $\hU^\l$ of $\hU(S,\cP)$, the centralizer of $\hU^\l$ in $\hU(S,\cP)$ is in fact contained in the subgroup 
$\hU(S,\cP',Q)$ of $\hU(S,\cP)$. Let $\g$ be a separating simple closed curve on $S$ bounding on one side a subsurface $S'$ of $S$
containing only the point $Q\in\cP$ and on the other side a subsurface $S''$ containing all the other points $\cP'$. 
Let us also choose $\g$ in a way that the marked hyperelliptic surfaces $(S',\u|_{S'},Q)$ and $(S'',\u|_{S''},\cP')$ are not isomorphic.

The open subgroup $\hU^\l$ contains a power $\tau_\g^k$, for some $k\in\N^+$. By Corollary~\ref{centprosymult}, we have that
$Z_{\hU(S,\cP)}(\tau_\g^k)=\hU(S,\cP)_\g$ and then $Z_{\hU(S,\cP)}(\hU^\l)\subseteq Z_{\hU(S,\cP)}(\tau_\g^k)=\hU(S,\cP)_\g$.
It is easy to check that the image of the subgroup $\hU(S,\cP)_\g$ via the natural epimorphism $\hU(S,\cP)\to\Sigma_{\cP}$
is contained in the stabilizer of the point $Q$ in the symmetric group $\Sigma_{\cP}$ and so the same is true for the image of $Z_{\hU(S,\cP)}(\hU^\l)$.
It follows that $Z_{\hU(S,\cP)}(\hU^\l)$ is contained in $\hU(S,\cP',Q)$, as claimed above.

We now proceed by induction on $\sharp\, \cP\geq 2$, with the base of the induction provided by the case $\sharp\, \cP=1$.
Let us consider the profinite Birman short exact sequence:
\[1\to\hp_1(S,Q)\to\hU(S,\cP',Q)\sr{\hat{f}_Q}{\to}\hU(S,\cP')\to 1.\]
As above, the intersection of $Z_{\hU(S,\cP)}(\hU^\l)$ with the profinite surface group $\hp_1(S,Q)$ is trivial. 
Therefore, the centralizer $Z_{\hU(S,\cP)}(\hU^\l)$ identifies with a subgroup of the centralizer $Z_{\hU(S,\cP')}(\hat{f}_Q(\hU^\l))$. 
Since $\sharp\, \cP'=\sharp\,\cP-1\geq 1$, by the induction hypothesis, the latter group is trivial and so the conclusion of the theorem follows.
\end{proof}

\begin{remark}For $n(\ring{S}_{/\u}\ssm\cB_\u)=3$, the statement of the theorem is still true but mostly trivial. For $n(\ring{S}_{/\u}\ssm\cB_\u)=4$,
the situation is more complicated. By Proposition~2.7 in \cite{FM}, there is an isomorphism:
\[\G(S_{0,4})\cong\PSL_2(\Z)\ltimes(\Z/2\times\Z/2),\]
where $\PSL_2(\Z)$ acts on $\Z/2\times\Z/2$ (the \emph{Klein subgroup}) through its quotient $\PSL_2(\Z/2)$ and the pure mapping class group 
$\PG(S_{0,4})$ identifies with the kernel of this action. We then also have an isomorphism:
\[\hG(S_{0,4})\cong\wh{\PSL_2(\Z)}\ltimes(\Z/2\times\Z/2).\]
In particular, the pure profinite mapping class group $\hPG(S_{0,4})$ is centralized by the Klein subgroup of $\hG(S_{0,4})$ and so
the statement of Theorem~\ref{centerhyp} does not hold in this case.

Another interesting case which is not considered in Theorem~\ref{centerhyp} is when $(S,\u,\cP)$ is a marked hyperelliptic closed surface of genus $1$.  
In this case, $\hU(S,\cP)=\hG(S,\cP)$ so that, by Theorem~4.14 in \cite{BF}, we have that $Z_{\hU(S,\cP)}(\hU^\l)$ is generated by 
the hyperelliptic involution for $\cP\leq 2$ and is trivial for $\cP> 2$.
\end{remark}

\end{document}